    \definecolor{darkblue}{rgb}{0,0,.85} 
    \definecolor{darkred}{rgb}{0.84,0,0}
    \numberwithin{equation}{subsection}
    \def\paragraph{\@startsection{paragraph}{4}%
    \z@\z@{-\fontdimen2\font}%
    {\normalfont\bfseries}}
    \newtheorem{lem}{Lemma}[section]
    \newtheorem{cor}[lem]{Corollary}
    \newtheorem{thm}[lem]{Theorem}
    \newtheorem{prop}[lem]{Proposition}
    \theoremstyle{definition}
    \newtheorem{definition}[lem]{Definition}
    \newtheorem{construction}[lem]{Construction}
    \newtheorem{rem}[lem]{Remark}
    \newtheorem{example}[lem]{Example}
    \newcommand{\mf}[1]{\mathfrak{#1}}
    \newcommand{\mc}[1]{\mathcal{#1}}
    \newcommand{\mb}[1]{\mathbf{#1}}
    \newcommand{\ov}[1]{\overline{#1}}
    \newcommand{\op}{\operatorname}
    \DeclareMathOperator{\Hom}{Hom}
    \DeclareMathOperator{\Spf}{Spf}
    \DeclareMathOperator{\Spec}{Spec}
    \DeclareMathOperator{\Spa}{Spa}
    \newcommand{\oc}{\mathrm{oc}}
    \newcommand{\et}{\mathrm{\acute{e}t}}
    \newcommand{\adm}{\mathrm{adm}}
    \newcommand{\proet}{\mathrm{pro\acute{e}t}}
    \newcommand{\dJ}{\mathrm{dJ}}
    \newcommand{\cc}{{\circ\circ}}
    \newcommand{\cat}[1]{\operatorname{\mathbf{#1}}} 
    \newcommand{\Set}{\cat{Set}}
    \newcommand{\Cov}{\cat{Cov}}
    \newcommand{\UCov}{\cat{UCov}}
    \newcommand{\Et}{\cat{\acute{E}t}}
    \newcommand{\FEt}{\cat{F\acute{E}t}}
    \newcommand{\h}{\mathcal{O}}
    \newcommand{\cO}{\mathcal{O}}
    \newcommand*\isomto{%
        \xrightarrow{\raisebox{-0.2 em}{\smash{\ensuremath{\sim}}}}%
    }
    \newcommand{\stacks}[1]{\cite[\href{https://stacks.math.columbia.edu/tag/#1}{Tag #1}]{StacksProject}}
    \title{Specialization for the pro-\'etale fundamental group}
    \date{\today}
    \author{Piotr Achinger}
    \address{Institute of Mathematics of the Polish Academy of Sciences \newline \indent ul.\ Śniadeckich 8, 00-656 Warsaw, Poland}
    \email{pachinger@impan.pl}
    \author{Marcin Lara}
    \address{Institute of Mathematics of the Polish Academy of Sciences \newline \indent ul.\ Śniadeckich 8, 00-656 Warsaw, Poland}
    \email{marcin.lara@impan.pl}
    \author{Alex Youcis}
    \address{Institute of Mathematics of the Polish Academy of Sciences \newline \indent ul.\ Śniadeckich 8, 00-656 Warsaw, Poland}
    \email{ayoucis@impan.pl}
\begin{document}

\begin{abstract}
    For a formal scheme $\mf{X}$ of finite type over a complete rank one valuation ring, we construct a specialization morphism
    \[
        \pi^\dJ_1(\mf{X}_\eta) \to \pi^\proet_1(\mf{X}_k)
    \]
    from the de Jong fundamental group of the rigid generic fiber to the Bhatt--Scholze pro-\'etale fundamental group of the special fiber. The construction relies on an interplay between admissible blowups of $\mf{X}$ and normalizations of the irreducible components of $\mf{X}_k$, and employs the Berthelot tubes of these irreducible components in an essential way. Using related techniques, we show that under certain smoothness and semistability assumptions, covering spaces in the sense of de Jong of a smooth rigid space which are tame satisfy \'etale descent.
\end{abstract}

\maketitle

\section{Introduction}

Throughout the following we fix a non-archimedean field $K$ with valuation ring $\h_K$, residue field $k$, and a pseudo-uniformizer $\varpi$. Let $X$ be a proper scheme over $\cO_K$ with generic fiber $X_K$ and special fiber $X_k$. In \cite[Exp.\@ X]{SGA1}, Grothendieck constructed a continuous homomorphism
\[ 
	\pi^\et_1(X_K, \ov{x}) \to \pi^\et_1(X_k, \ov{y})
\]
between the \'etale fundamental groups, called the \emph{specialization map}. This map is moreover surjective if $X$ is normal or if $X_k$ is reduced (see \cite[\S 6]{RaynaudPicard} for other criteria). Translated in terms of coverings, the existence of the specialization map amounts to the fact the pullback functor $\FEt_X \to \FEt_{X_k}$ between the corresponding categories of finite \'etale coverings is an equivalence, so that by composing its inverse with the restriction functor $\FEt_{X}\to \FEt_{X_K}$ we obtain a functor
\[
    u\colon \FEt_{X_k} \to \FEt_{X_K}.
\]
Surjectivity of the specialization map then corresponds to the statement that $u$ maps connected coverings to connected coverings.

In cases of geometric interest, such as semistable reduction, the special fiber $X_k$ will often not be normal. For such schemes, the \emph{pro-\'etale fundamental group} introduced by Bhatt and Scholze \cite{BhattScholze} contains more refined information than the \'etale fundamental group. The corresponding notion of a covering space, a \emph{geometric covering}, is an \'etale morphism which satisfies the valuative criterion of properness. Since connected geometric coverings may have infinite degree and may not admit Galois closures, the pro-\'etale fundamental group is not pro-finite (or even pro-discrete) in general, but is a Noohi topological group (see op.\@ cit.\@). As it turns out though, there is no specialization map for the pro-\'etale fundamental group fitting inside a commutative square
\[
	\xymatrix{
		\pi_1^\proet(X_K, \ov{x})\ar[d] \ar@{.>}[r] \ar@{}[r]|-{\displaystyle \times} & \pi_1^\proet(X_k,\ov{y}) \ar[d] \\
		\pi^\et_1(X_K, \ov{x}) \ar[r] & \pi^\et_1(X_k, \ov{y}).
	}
\]

\begin{example}[Tate elliptic curve] \label{ex:tate}
Suppose that $K$ is algebraically closed of characteristic zero and let $X\subseteq \mb{P}^2_{\h_K}$ be a cubic hypersurface whose generic fiber $X_\eta$ is smooth and whose special fiber $X_k$ is nodal. Then $X_k$ is isomorphic to $\mb{P}^1_k$ with $0$ and $\infty$ identified, and has a (unique) geometric covering $Y\to X_k$ with Galois group $\mb{Z}$. Explicitly $Y$ is given by an infinite chain of copies of $\mathbf{P}^1_k$ glued along the poles. In this case, the diagram of fundamental groups as above would have to be of the form
\[ 
    \xymatrix{
        \widehat{\mathbf{Z}}\times \widehat{\mathbf{Z}} \ar@{.>}[rr]^{} \ar[d]_-{\displaystyle \rotatebox{90}{$\sim$}} \ar@{}[rr]|-{\displaystyle \times} & & \mathbf{Z} \ar[d]^{\rm inclusion} \\
        \widehat{\mathbf{Z}}\times \widehat{\mathbf{Z}} \ar[rr]_{\rm projection} & & \widehat{\mathbf{Z}}.
    }
\]
where there is no homomorphism making the square commute.
\end{example}

To clarify the issue, it is useful to note that Grothendieck's specialization map may be understood in terms of a more general specialization map involving formal schemes and rigid spaces. Namely, for any formal scheme $\mf{X}$ locally of finite type over $\h_K$ there is an equivalence $\Et_{\mf{X}_k}\isomto\Et_{\mf{X}}$ and thus one is able to form the functors
\begin{equation*}
    u\colon \Et_{\mf{X}_k}\simeq \Et_{\mf{X}}\to \Et_{\mf{X}_\eta}
    \quad \text{and} \quad
    u\colon \FEt_{\mf{X}_k}\simeq \FEt_{\mf{X}}\to \FEt_{\mf{X}_\eta}
\end{equation*}
where here $\mf{X}_\eta$ is the \emph{rigid generic fiber} in the sense of Raynaud, a rigid $K$-space (see our notation and conventions section for a precise definition). Consequently, we have a formal version of the specialization map $\pi^\et_1(\mf{X}_\eta, \ov{x}) \to \pi^\et_1(\mf{X}_k, \ov{y})$. To obtain Grothendieck's specialization map, one uses Grothendieck's existence theorem to show that if $X$ is a proper $\h_K$-scheme and $\mf{X}=\widehat{X}$ (the $\varpi$-adic completion of $X$) then all finite \'etale covers of $\mf{X}$ and $\mf{X}_\eta=X_K^\mathrm{an}$ are algebraizable (i.e.\@, that $\FEt_{\mf{X}}\simeq \FEt_X$ and $\FEt_{\mf{X}_\eta}\simeq \FEt_{X_K}$). In contrast, in Example~\ref{ex:tate}, if $\mf{Y}\to \mf{X}$ is the unique \'etale lifting of $Y\to X_k$, then the rigid generic fiber $u(Y)=\mf{Y}_\eta\to \mf{X}_\eta$ is the Tate uniformization $\mb{G}_m^{\rm an}\to X_\eta^{\rm an}$, which is patently non-algebraizable.

To accommodate coverings of $\mf{X}_\eta$ arising from geometric coverings of $\mf{X}_k$, one needs a suitably general class of covering spaces in non-archimedean geometry. In the seminal paper \cite{deJongFundamental} de Jong explored a class of morphisms, which we call \emph{de Jong covering spaces}, defined as maps $Y\to X$ of rigid $K$-spaces such that for any point $x$ of $X$ there exists an overconvergent (i.e.\@ Berkovich) open neighborhood $U$ such that $Y_U$ is a disjoint union of finite \'etale coverings of $U$. This class of covering spaces contains finite \'etale coverings as well as more exotic examples, such as Tate's uniformization of elliptic curves and period mappings for certain Rapoport--Zink spaces. In op.\@ cit.\@ it is shown that the category of de Jong covering spaces is sufficiently rich as to support a version of Galois theory, and thus gives rise to a fundamental group which we call the \emph{de Jong fundamental group} and denote $\pi_1^\dJ(X,\ov{x})$.

Our first main result is then the following.

\begin{thm}[See Theorem~\ref{thm:specialization}] \label{thm:intro-spec}
    Let $\mf{X}$ be a quasi-paracompact formal scheme locally of finite type over $\cO_K$, and let $Y\to \mf{X}_k$ be a geometric covering. Then the induced \'etale morphism $u(Y)\to \mf{X}_\eta$ is a de Jong covering space.
\end{thm}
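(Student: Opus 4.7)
The statement is local on $\mf{X}_\eta$, so it suffices to produce, for each point $x \in \mf{X}_\eta$, an overconvergent (Berkovich) open neighborhood $U \ni x$ such that $u(Y) \times_{\mf{X}_\eta} U$ is a disjoint union of finite \'etale covers of $U$.

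The strategy rests on two key inputs. First, for any closed subscheme $Z \subseteq \mf{X}_k$, the formal completion $\mf{X}_{/Z}$ has rigid generic fiber equal to the Berthelot tube $]Z[\,\subseteq \mf{X}_\eta$, which is overconvergent open, and the tubes of the irreducible components of $\mf{X}_k$ form an overconvergent cover of $\mf{X}_\eta$. Second, by the Bhatt--Scholze theorem that $\pi_1^\proet$ agrees with $\pi_1^\et$ on geometrically unibranch schemes, every geometric covering of a normal scheme decomposes as a disjoint union of finite \'etale coverings. Combining these, if the components $Z_i$ of $\mf{X}_k$ were normal, then each restriction $Y|_{Z_i}$ would be a disjoint union of finite \'etale covers; these would lift uniquely along the equivalence $\Et_{Z_i} \simeq \Et_{\mf{X}_{/Z_i}}$ and, upon passage to the generic fiber, trivialize $u(Y)$ over each $]Z_i[$ in the required manner.

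The crux is thus to reduce to the normal case. Since admissible blowups $\mf{X}' \to \mf{X}$ do not alter the rigid generic fiber and pull geometric coverings of $\mf{X}_k$ back to geometric coverings of $\mf{X}'_k$, we are free to replace $\mf{X}$ by such a blowup. Working locally around $x$ — using quasi-paracompactness to pass to a quasi-compact open — the plan is to construct an admissible blowup $\mf{X}' \to \mf{X}$ such that the normalizations $\widetilde{Z}_i \to Z_i$ of the components of $\mf{X}_k$ through which $x$ specializes are realized among the components of $\mf{X}'_k$, or at least so that some component of $\mf{X}'_k$ through which $x$ specializes is geometrically unibranch at that point. Once this is achieved, the argument of the previous paragraph applies on that component and yields the required trivialization on its tube.

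The main obstacle, and the geometric heart of the argument, will be producing this admissible blowup: making precise and proving that normalizations of irreducible components of $\mf{X}_k$ are captured by components of a suitable formal model dominating $\mf{X}$. This is exactly the interplay between admissible blowups and normalizations flagged in the introduction, and should rely on combining Raynaud's theory of admissible blowups with a careful analysis of the normalization of the reduced special fiber. A secondary technical point is to verify that the \'etale lift $\mf{Y} \to \mf{X}$ — which is \'etale but typically not of finite type — behaves well under completion along closed subschemes and under passage to the rigid generic fiber, so that the localization strategy outlined above is well-posed.
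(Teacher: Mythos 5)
Your plan follows the same route as the paper: cover $\mf{X}_\eta$ by the tubes $T(\mf{X}|Z_i)$ of the irreducible components of $\mf{X}_k$, identify each tube with the generic fiber of the formal completion along $Z_i$, use that geometric coverings of geometrically unibranch schemes split into finite \'etale pieces, and reduce to that case by an admissible blowup. The reduction to the finite type case via quasi-paracompactness and the transfer of finiteness from $Z_i$ to the formal completion (your ``secondary technical point'') are also handled this way in the paper.

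However, the step you yourself flag as the heart of the matter contains a genuine gap, and your sketch of it would fail as stated. You propose a single admissible blowup $\mf{X}'\to\mf{X}$ whose special fiber ``realizes'' the normalizations of the components of $\mf{X}_k$. The problem is that such a blowup necessarily introduces \emph{new} irreducible components of $\mf{X}'_k$ (the exceptional components lying over the blowup center), and the overconvergent cover of $\mf{X}_\eta$ by tubes must use \emph{all} components of $\mf{X}'_k$: a point $x$ may lie only in the tube of an exceptional component, not in the tube of any strict transform. These exceptional components map onto the (typically non-normal) blowup center, so there is no reason for the geometric covering to split over them, and one blowup does not suffice. The paper resolves this by an induction on the codimension of the image $Z\subseteq\mf{X}_k$ of a component $Z'$ of $\mf{X}'_k$: at each stage one blows up an admissible ideal whose vanishing locus has codimension $>n+1$, arranges that the strict transforms of the codimension-$(n+1)$ components factor through normalizations (using that the normalization of $Z$ is dominated by a $U$-admissible blowup, by Raynaud--Gruson, and that the resulting ideal of $\cO_{\mf{X}_k}$ lifts to an admissible ideal of $\cO_{\mf{X}}$ --- itself a nontrivial gluing argument over a general rank-one $\cO_K$), and checks that every newly created component has image of codimension $>n+1$, so the induction terminates at $n=\dim\mf{X}_k$. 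Without this iterative structure, or some substitute for it, your argument does not go through; the rest of your outline is sound.
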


This result is somewhat surprising, since it implies that $u(Y)$ is a disjoint union of finite \'etale coverings on a fairly coarse topology of $\mf{X}_\eta$ despite the fact that $Y$ does not even necessarily split into finite \'etale coverings \'etale locally on $\mf{X}_k$ (see Remark~\ref{rem:etale-nonsplit}). To address this apparent discrepancy, we first establish a criterion for the generic fiber of a morphism of formal schemes to be a de Jong covering space. Namely, for a map of formal schemes $\mf{Y}\to\mf{X}$ such that $\mf{Y}_\eta\to \mf{X}_\eta$ is \'etale, if its restriction to every irreducible component of $\mf{X}_k$ is the disjoint union of finite morphisms, then $\mf{Y}_\eta\to \mf{X}_\eta$ is a de Jong covering space (Proposition~\ref{prop:spec-good-case}). In the second step, we reduce to the situation where this criterion applies using a delicate blowup procedure (see Proposition~\ref{prop:nice-blowup}) and the fact that geometric coverings of a normal scheme are disjoint unions of finite \'etale coverings.

In Corollary~\ref{cor:tfae}, we strengthen the link between de Jong covering spaces and geometric coverings by establishing a converse of Theorem~\ref{thm:intro-spec}. For an \'etale map of formal schemes, its generic fiber is a de Jong covering if and only if its special fiber is a geometric covering, and both are equivalent to the generic fiber being partially proper. 

As a formal consequence of Theorem~\ref{thm:intro-spec} we obtain the specialization homomorphism: assuming that $\mf{X}_\eta$ and $\mf{X}_k$ are both connected, for a compatible choice of geometric points $\ov x$ of $\mf{X}_\eta$ and $\ov y$ of $\mf{X}_k$ (see the discussion preceding the proof of Theorem \ref{thm:specialization}), there is a continuous homomorphism
\begin{equation*} 
    \pi_1^\dJ(\mf{X}_\eta, \ov{x})\to \pi^\proet_1(\mf{X}_k,\ov{y})
\end{equation*}
fitting inside a commutative diagram
\[ 
    \xymatrix{
    	\pi_1^\dJ(\mf{X}_\eta,\ov{x})\ar[d] \ar[r] & \pi_1^\proet(\mf{X}_k,\ov{y}) \ar[d] \\
    	\pi_1^\et(\mf{X}_\eta, \ov{x}) \ar[r] & \pi^\et_1(\mf{X}_k, \ov{y}).
    }
\]
As in the finite \'etale setting, we provide criteria for this homomorphism to have dense image (since the groups are no longer compact, density is not equivalent to surjectivity, and checking the latter could be more challenging). To this end, in Appendix~\ref{app:eta-normal} we define a property of formal schemes $\mf{X}$ over $\h_K$ that we call \emph{$\eta$-normal}. In essence this means that $\mf{X}$ is `integrally closed in $\mf{X}_\eta$', and covers the cases when $\mf{X}$ is normal or $\mf{X}_k$ is reduced. We also show a version of Serre's criterion for $\eta$-normality (see Theorem \ref{prop:serre-crit}). 

We then have the following density result.

\begin{prop}[See Corollary~\ref{cor:specialization}]
    In the situation of Theorem~\ref{thm:intro-spec}, suppose that $\mathfrak{X}$ is $\eta$-normal. Then the specialization map $\pi_1^\dJ(\mf{X}_\eta, \ov{x})\to \pi^\proet_1(\mf{X}_k,\ov{y})$ has dense image.
\end{prop}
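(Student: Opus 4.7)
The plan is to recast density of the specialization map in terms of coverings. For a continuous homomorphism $\varphi\colon G\to H$ of Noohi topological groups, the image of $\varphi$ is dense if and only if the pullback functor on continuous $H$-sets sends transitive $H$-sets to transitive $G$-sets, equivalently, if the induced functor on connected coverings preserves connectedness. In our setting, by Theorem~\ref{thm:intro-spec} this pullback functor is exactly $Y\mapsto u(Y)$. It therefore suffices to show: whenever $Y\to \mf{X}_k$ is a connected geometric covering, the de Jong covering $u(Y)\to \mf{X}_\eta$ is connected.

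Unwinding definitions, $u(Y)=\mf{Y}_\eta$, where $\mf{Y}\to \mf{X}$ is the unique étale lift of $Y\to \mf{X}_k$ under the equivalence $\Et_{\mf{X}_k}\isomto \Et_\mf{X}$. Since $|\mf{Y}|=|\mf{Y}_k|$ topologically, connectedness of $Y=\mf{Y}_k$ is the same as connectedness of $\mf{Y}$. The task therefore reduces to showing that, under our hypothesis, the specialization induces a bijection $\pi_0(\mf{Y}_\eta)\isomto \pi_0(\mf{Y})$.

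This is where $\eta$-normality enters. I expect the following to be established in Appendix~\ref{app:eta-normal}: (i) $\eta$-normality is inherited under étale morphisms of formal schemes, so $\mf{Y}$ is $\eta$-normal because $\mf{X}$ is; and (ii) for an $\eta$-normal quasi-paracompact formal scheme locally of finite type over $\cO_K$, the specialization induces a bijection on $\pi_0$. The heuristic for (ii) is transparent: $\eta$-normality means that $\cO_\mf{Y}$ is integrally closed in (the pushforward of) $\cO_{\mf{Y}_\eta}$, and every idempotent $e$ satisfies the integral equation $e^2-e=0$, so the idempotents of global sections on $\mf{Y}_\eta$ descend to $\mf{Y}$. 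Granted (i) and (ii), $\mf{Y}_\eta$ is connected, and the functor $u$ preserves connectedness, as required.

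The main obstacle will be isolating (i) and (ii) cleanly from the development of $\eta$-normality in the appendix. Étale stability ought to follow from the local formulation of the notion and the fact that étaleness preserves being integrally closed, while globalising the idempotent argument to a statement about $\pi_0$ will rely on the quasi-paracompactness assumption inherited from Theorem~\ref{thm:intro-spec}, plausibly via reduction to the affine case through a suitable Zariski-affinoid cover of $\mf{Y}$. Once these appendix statements are in hand, the density of the specialization homomorphism is a direct consequence of Theorem~\ref{thm:intro-spec} and the bijection $\pi_0(\mf{Y}_\eta)\simeq \pi_0(\mf{Y}_k)$.
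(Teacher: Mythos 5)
Your proposal is correct and follows essentially the same route as the paper: density is reduced (via the standard criterion for Noohi groups) to showing that $u$ preserves connectedness, and then one invokes exactly the two appendix facts you identify — that $\eta$-normality ascends along étale maps (Proposition~\ref{prop:etale-eta-normal}) and that a connected $\eta$-normal formal scheme has connected generic fiber via the idempotent argument (Lemma~\ref{lem:connected generic fiber}). The only cosmetic difference is that you ask for a bijection on $\pi_0$, whereas only the implication ``$\mf{Y}$ connected $\Rightarrow$ $\mf{Y}_\eta$ connected'' is needed and is what you actually use.
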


The criterion for being a de Jong covering space used in the proof of Theorem~\ref{thm:intro-spec} can be useful even in the presence of ramification. In our second main result, we apply it to study covering spaces which are \emph{tame} in the sense introduced by H\"ubner \cite{Hubner}. We warn the reader that these coverings are only ``tame along the special fiber,'' so to speak (for example, an Artin--Schreier covering of $\mb{A}^1_{\mathbf{F}_p}$ induces a tame covering of the affinoid unit disc over $\mb{Q}_p$). In \cite[\S5.3]{ALY1}, we studied the question whether de Jong coverings of a rigid $K$-space satisfy admissible or \'etale descent, and gave a negative answer by constructing a covering of an annulus in positive characteristic which is not a de Jong covering, but which becomes a de Jong covering space on a non-overconvergent open cover of the base. However, the construction relied on wild ramification phenomena, and it seems natural to expect that such behavior is avoided under a tameness assumption. 

In order to state the result, we need to introduce some terminology used in \cite{ALY1}. Let us call a morphism of rigid $K$-spaces $Y\to X$ an \emph{$\adm$-covering space} (resp.\ an \emph{$\et$-covering space}) if there exists an open (resp.\@ \'etale) cover $\{U_i\to X\}$ such that $Y_{U_i}\to U_i$ is the disjoint union of finite \'etale coverings of $U_i$ for all $i$. 

\begin{thm}[{See Theorem \ref{thm:tame-comparison} and Corollary~\ref{cor:tame-comparison}}]\label{thm:tame-comp-intro}
    Suppose that $K$ is discretely valued. Let $X$ be a smooth quasi-paracompact and quasi-separated rigid $K$-space, and suppose that generalized strictly semistable formal models of $X$ are cofinal (see Definition \ref{def:semistable-cofinal}). Then, every tame $\et$-covering space of $X$ is a de Jong covering.
\end{thm}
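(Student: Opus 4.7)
The plan is to reduce the claim to the formal-model criterion of Proposition~\ref{prop:spec-good-case}. Since being a de Jong covering is local on the overconvergent topology of $X$, I may assume $X$ is quasi-compact after passing to an overconvergent cover. Using the cofinality hypothesis, I pick a generalized strictly semistable formal model $\mf{X}$ of $X$, which I am free to refine by an admissible blowup (for instance via Proposition~\ref{prop:nice-blowup}). By strict semistability, every irreducible component $Z$ of $\mf{X}_k$ is smooth over $k$, and its Berthelot tube $]Z[\subset X$ will play the central role, just as in the proof of Theorem~\ref{thm:intro-spec}.

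The core step is to show that, after a further admissible blowup of $\mf{X}$ if necessary, the pullback of $Y$ to the tube $]Z[$ of each irreducible component of $\mf{X}_k$ is a disjoint union of finite \'etale coverings of $]Z[$. Granting this, each such cover extends to a formal \'etale cover of an open formal neighborhood of $Z$ in $\mf{X}$, and these pieces can be glued along intersections of components using a blowup/normalization procedure analogous to the one employed in the proof of Theorem~\ref{thm:intro-spec}. This yields a morphism of formal schemes $\mf{Y}\to\mf{X}$ with $\mf{Y}_\eta = Y$ whose restriction to every irreducible component of $\mf{X}_k$ is a disjoint union of finite morphisms; Proposition~\ref{prop:spec-good-case} then gives that $Y\to X$ is a de Jong covering.

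The principal obstacle is this core step, and it is here that the tameness hypothesis must enter essentially. Locally around a stratum where several components of $\mf{X}_k$ meet, the tube $]Z[$ sits inside the standard semistable local model, and the complement of the boundary divisor has tame fundamental group amenable to an Abhyankar-type description. Tameness of $Y\to X$ in H\"ubner's sense translates into tameness of the pullback along this boundary, so one can first descend the $\et$-covering to a finite \'etale cover of the tube minus the boundary, then extend across by a tame purity argument (Abhyankar's lemma applied in the local semistable chart). The wild covering of an annulus constructed in \cite{ALY1} shows that without tameness this extension step genuinely fails, confirming that the hypothesis cannot be dropped and indicating that the delicate point of the proof is precisely this tame extension across the boundary, together with verifying that the local extensions glue compatibly across the normal-crossings combinatorics of $\mf{X}_k$.
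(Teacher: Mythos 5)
Your high-level plan --- reduce to the overconvergence criterion of Proposition~\ref{prop:spec-good-case} via a gss formal model, with tameness entering through an Abhyankar-type statement, and with the wild annulus example of \cite{ALY1} showing the hypothesis is essential --- matches the paper's strategy. But there are two genuine gaps.

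First, you never address the difference between $\et$-covering spaces and $\adm$-covering spaces. A tame $\et$-covering space splits only after pullback along an \emph{\'etale} cover of $X$, so there is no open cover of $X$ (let alone of a formal model) on which $Y$ is a priori a disjoint union of finite \'etale coverings, and hence no way to start the normalization/formal-model construction. The paper's Theorem~\ref{thm:tame-comparison} treats only $\adm$-covering spaces; the $\et$ case is then obtained in Corollary~\ref{cor:tame-comparison} by a separate bootstrap: one refines the \'etale cover to maps $V_{ij}\to W_i\to U_i\to X$ with $W_i\to U_i$ finite \'etale over affinoid opens and $\{V_{ij}\}$ a finite affinoid open cover of $W_i$, applies the $\adm$-case to $Y_{W_i}\to W_i$ (which requires cofinality of gss models for the spaces $W_i$ as well, a hypothesis you would also need to secure), and then descends along $W_i\to U_i$ using \cite[Lemma~2.3]{deJongFundamental}. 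Without this layer your argument only proves the $\adm$ case.

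Second, the mechanism of your ``core step'' does not work as described. You propose to first produce a finite \'etale covering of the tube $]Z[$ ``minus the boundary'' and then extend across by tame purity; but $]Z[$ is an open rigid subspace of $X$ over which $Y$ is already \'etale everywhere --- the ramification one must control lives on the \emph{special fiber} of the formal model, not on any divisor inside the tube. Likewise, the finite covers of $]Z[$ do not ``extend to formal \'etale covers of a formal neighborhood of $Z$'': the formal extension is finite but genuinely ramified along the components of $\mf{X}_k$ (it is only Kummer \'etale), and no gluing along the normal-crossings combinatorics is needed. The paper's actual route is the reverse of yours: one first builds the finite formal model $\mf{Y}\to\mf{X}$ globally by normalizing $\mf{X}$ in $Y$ (Corollary~\ref{cor:normalization}, which is where the $\adm$-splitting is used), then uses Lemma~\ref{lem:normalization-tame} and the logarithmic Abhyankar computation (Corollary~\ref{cor:log-abhy}) to show that $(\mf{Y}\times_{\mf{X}}Z)_{\rm red}$ is \emph{normal} for each irreducible component $Z$ of $\mf{X}_k$; local irreducibility then forces $\mf{Y}\times_{\mf{X}}Z$ to be a disjoint union of finite $Z$-schemes (Proposition~\ref{prop:oc-criterion}), and only at that point does Proposition~\ref{prop:spec-good-case} (via the identification of $]Z[$ with the generic fiber of the completion along $Z$, Proposition~\ref{prop:tube-completion-comparison}) yield the splitting of $Y$ over the tubes. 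You should restructure the argument around the normality of the special fiber of the normalized formal model rather than around a purity statement inside the tube.
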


In other words, tame de Jong coverings can be glued in the \'etale topology.  The assumption on generalized strictly semistable formal  models is a technical condition that is automatically satisfied if $K$ is of equal characteristic zero (see \cite{TemkinDesingularization}) and conjecturally satisfied in general. Moreover, since tameness is automatic when $K$ has equal characteristic zero, we obtain the following unconditional corollary.

\begin{cor}[See Corollary~\ref{cor:tame-char0}] \label{cor:intro-tame-char-0-cor}
    Let $K$ be a discretely valued non-archimedean field of equal characteristic zero. Then, for any smooth quasi-paracompact and quasi-separated rigid $K$-space $X$, the notions of de Jong, $\adm$-covering spaces, and $\et$-covering spaces coincide.
\end{cor}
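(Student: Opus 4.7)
The plan is to reduce Corollary~\ref{cor:intro-tame-char-0-cor} to Theorem~\ref{thm:tame-comp-intro} by verifying the two hypotheses in the equal characteristic zero setting, together with the easy inclusions among the three classes of covering spaces.

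First, I would dispose of the tautological inclusions. Every de Jong covering is an $\adm$-covering: the overconvergent (Berkovich) open neighborhoods appearing in the definition of a de Jong covering are in particular admissible opens of the rigid space, and pulling back to these neighborhoods the covering becomes a disjoint union of finite \'etale maps. Similarly every $\adm$-covering is an $\et$-covering since admissible open immersions are in particular \'etale. Thus the content of the corollary is to upgrade an arbitrary $\et$-covering space of $X$ to a de Jong covering space, and for this I plan to invoke Theorem~\ref{thm:tame-comp-intro} directly.

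To apply Theorem~\ref{thm:tame-comp-intro} I need to verify its two hypotheses for $X$ in equal characteristic zero. For the hypothesis on generalized strictly semistable formal models being cofinal, I would cite Temkin's desingularization results \cite{TemkinDesingularization}, which are explicitly noted in the paragraph following Theorem~\ref{thm:tame-comp-intro} to guarantee exactly this cofinality in equal characteristic zero. For the tameness hypothesis, I would observe that in equal characteristic zero both $K$ and its residue field $k$ have characteristic $0$, so there is no prime at which wild ramification could occur; hence every $\et$-covering space of $X$ is automatically tame in the sense of H\"ubner \cite{Hubner}. (This is the standard fact that all ramification is tame when the relevant residue characteristics are zero.)

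Assembling these ingredients, Theorem~\ref{thm:tame-comp-intro} applies unconditionally to any smooth quasi-paracompact and quasi-separated rigid $K$-space $X$, and yields that every $\et$-covering space of $X$ is a de Jong covering. Combined with the easy inclusions above, this gives the three-way equivalence. The only potentially subtle step is the invocation of Temkin's theorem for cofinality of semistable models, and the automaticity of tameness in residue characteristic zero, but both are well-established and explicitly flagged in the discussion preceding the corollary, so no further work should be required beyond citing them.
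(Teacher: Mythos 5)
Your proposal is correct and follows the paper's own argument for Corollary~\ref{cor:tame-char0}: cofinality of gss formal models in equal characteristic zero via Temkin's desingularization, automaticity of tameness when the residue field has characteristic zero, and the tautological inclusions de Jong $\Rightarrow$ $\adm$ $\Rightarrow$ $\et$. The only point worth making explicit is that the result behind Theorem~\ref{thm:tame-comp-intro}, namely Corollary~\ref{cor:tame-comparison}, also requires cofinal gss models for finite \'etale coverings of affinoid opens of $X$; these are again smooth, quasi-compact and quasi-separated, so the same citation of \cite{TemkinDesingularization} covers that hypothesis as well.
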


As stated above, the proof of Theorem \ref{thm:tame-comp-intro} contains in its kernel the same technique used in the proof of Theorem~\ref{thm:intro-spec}. Suppose for simplicity that $Y\to X$ is a tame \'etale map which splits into the disjoint union of finite \'etale coverings on an open covering of $X$. If $\mf{X}$ is a generalized strictly semistable formal model of $X$ such that this open covering of $X$ extends to an open covering of $\mf{X}$, we show using (a logarithmic version of) Abhyankar's lemma the existence of a formal model $\mf{Y}\to\mf{X}$ which is Kummer \'etale. Using some basic toric geometry we show that $\mf{Y}\to \mf{X}$ satisfies the criterion of Proposition~\ref{prop:spec-good-case}, and deduce that $\mf{Y}_\eta=Y$ is a de Jong covering space of $\mf{X}_\eta=X$ as desired. To extend this result from $\adm$-covering spaces to $\et$-covering spaces we apply an \'etale bootstrap argument in the sense of \cite[\S2.8]{ALY1} (see Corollary \ref{cor:tame-comparison}).

It seems believable that the smoothness and semistability assumptions in Theorem~\ref{thm:tame-comp-intro} are not necessary, but the authors were unable to remove them. In a different direction, the link with logarithmic geometry used in the proof strongly suggests the existence of a specialization map to a logarithmic variant of the pro-\'etale fundamental group of the special fiber of a semistable model, and we would expect such a map to be an isomorphism. Finally, in the companion paper \cite{ALY1}, the authors have developed the notion of a 'geometric covering' of a rigid space which provides a generalization of $\et$-covering spaces. A natural question is whether in the setting of Corollary \ref{cor:intro-tame-char-0-cor} every such tame geometric covering must be a de Jong covering space.

\subsection*{Notation and Conventions} Throughout this paper, we shall use the following notation and terminology:
\begin{itemize}
    \item By a \emph{non-archimedean field} $K$ we mean a field complete with respect to a rank one valuation $|\cdot|\colon K\to [0, \infty)$.
    \item For a Huber ring $A$ we denote by $A^\circ$ the subring of powerbounded elements and by $A^\cc$ the set of topologically nilpotent elements. We abbreviate $\Spa(A,A^\circ)$ to $\Spa(A)$.
    \item By a \emph{rigid $K$-space} we mean an adic space locally of finite type over $\Spa(K)$. Our conventions and notation concerning adic spaces is as in \cite[\S 1-2]{ALY1}. We denote by $\cat{Rig}_K$ the category of rigid $K$-spaces, and by $\cat{Rig}^{\rm qcqs}_K$ the full subcategory consisting of quasi-compact and quasi-separated rigid $K$-spaces.
    \item A topological space $X$ is \emph{quasi-paracompact} \cite[Definition~8.2/12]{BoschLectures} if it admits an open cover $X=\bigcup U_i$ by quasi-compact opens such that each $U_i$ intersects only finitely many other $U_j$.
    \item An open subset $U$ of a rigid $K$-space $X$ is \emph{overconvergent} if it is stable under specialization, or equivalently if the inclusion $U\hookrightarrow X$ is partially proper \cite[Proposition~2.3.4]{ALY1}.
    \item By a \emph{maximal point} of a rigid $K$-space $X$ we mean a point which is maximal with respect to the generalization relation. By \cite[Lemma 1.1.10]{Huberbook} this is equivalent to the corresponding valuation being rank $1$.
    \item Let $X$ be an adic space, formal scheme, or scheme. By $\Et_X$ (resp.\ $\FEt_X$) we mean the category of objects \'etale (resp.\ finite \'etale) over $X$.
    \item Closed subsets of (formal) schemes are implicitly treated as schemes endowed with the reduced scheme structure.
    \item We consistently use the term \emph{cover} as in `open cover' and \emph{covering} as in `covering space.'
\end{itemize}

\subsection*{Acknowledgements}

The authors would like to thank Hélène Esnault, Joachim Jelisiejew and Michael Temkin for helpful conversations. 

This work is a part of the project KAPIBARA supported by the funding from the European Research Council (ERC) under the European Union’s Horizon 2020 research and innovation programme (grant agreement No 802787).

\section{Rigid generic fibers of formal schemes}
\label{s:prelims}

In this section, we review the theory of formal schemes and their rigid generic fibers.  As before, we fix a non-archimedean field $K$ with ring of integers $\h_K$, residue field $k$, and a pseudo-uniformizer $\varpi$. We will need to work with certain non-adic formal schemes over $\cO_K$, such as $\Spf(\cO_K\llbracket x \rrbracket)$; for such formal schemes the rigid generic fiber has been constructed by Berthelot \cite{Berthelot} (see also de Jong \cite[\S 7]{deJongCrystal}) in case $K$ is discretely valued. For an extension to $K$ arbitrary, we will use the approach of Fujiwara and Kato \cite[Chapter II, Section 9.6]{FujiwaraKato} (see also Scholze and Weinstein \cite{ScholzeWeinstein} for an alternative definition). 

\subsection{Formal schemes}
\label{subsec:formal-schemes}

We refer to \cite[Chapter I]{FujiwaraKato} for the basic terminology regarding formal schemes. We shall only consider formal schemes which are locally of the form $\Spf(A)$ where $A$ has the $I$-adic topology for a finitely generated ideal $I$, and is $I$-adically complete and separated. By an ideal of definition of $A$ we shall always mean a finitely generated ideal of definition. Recall that a continuous homomorphism of such algebras $A\to B$ is \emph{adic} if $I\cdot B$ is an ideal of definition of $B$ for some (equiv.\ every) ideal of definition $I$ of $A$, and a map of formal schemes $\mf{Y}\to \mf{X}$ is \emph{adic} if it is locally of the form $\Spf(B)\to \Spf(A)$ for an adic homomorphism $A\to B$.

We first discuss a hypothesis on formal schemes over $\h_K$ that is more flexible than locally topologically of finite type (in the discrete case such objects were considered in \cite[\S1]{BerkovichVanishingII} under the terminology `special').

\begin{prop} \label{prop:fml-lfft}
    Let $A$ be a topological $\cO_K$-algebra which is complete and separated with respect to a finitely generated ideal. The following conditions are then equivalent:
    \begin{enumerate}[(a)]
        \item For every (equiv.\@ any) pseudouniformizer $\varpi$ of $\cO_K$ and ideal of definition $I\subseteq A$ such that $\varpi A\subseteq I$, the ring $A/I$ is a finitely generated $\cO_K/\varpi$-algebra.
        \item There is a continuous and adic $\cO_K$-algebra surjection
        \[ 
            \psi\colon P:= \cO_K\langle x_1, \ldots, x_n\rangle \llbracket y_1, \ldots, y_m\rrbracket \to A
        \]
        for some $n, m\geqslant 0$. (Here, $P$ is the completion of $\cO_K[x_1, \ldots, x_n, y_1, \ldots, y_m]$ with respect to the ideal\footnote{N.B.\ We have $\cO_K\langle x_1, \ldots, x_n\rangle \llbracket y_1, \ldots, y_m\rrbracket \simeq \cO_K \llbracket y_1, \ldots, y_m\rrbracket\langle x_1, \ldots, x_n\rangle$ where the latter is the ring of restricted power series as in \cite[Chapter 0, \S8.4]{FujiwaraKato}.} $(\varpi, y_1, \ldots, y_m)$.)
    \end{enumerate}
\end{prop}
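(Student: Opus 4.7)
The plan is to prove (a) $\Leftrightarrow$ (b) via two independent constructions; the chain (a)-for-one-pair $\Rightarrow$ (b) $\Rightarrow$ (a)-for-every-pair will then simultaneously justify the ``for every (equiv.\ any)'' clause of (a).

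For (b) $\Rightarrow$ (a), I fix a pseudouniformizer $\varpi$ and an ideal of definition $I \subseteq A$ containing $\varpi A$, and let $J = (\varpi, y_1, \ldots, y_m) P$ denote the canonical ideal of definition of $P$. Adicity of $\psi$ gives that $\psi(J) \cdot A$ is an ideal of definition of $A$, and two ideals of definition are cofinal in each other's powers, so $J^N \subseteq \psi^{-1}(I)$ for some $N \geqslant 1$. The quotient $P/J^N$ is readily identified with the finitely generated $\cO_K/\varpi$-algebra $(\cO_K/\varpi)[x_1, \ldots, x_n, y_1, \ldots, y_m]/(y)^N$; since $A/I$ is a further quotient of $P/J^N$, it is also finitely generated over $\cO_K/\varpi$.

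For (a) $\Rightarrow$ (b), I fix one pair $(\varpi, I)$ satisfying (a). Lifting finitely many $\cO_K/\varpi$-algebra generators of $A/I$ to elements $x_1, \ldots, x_n \in A$ and picking ideal generators $y_1, \ldots, y_m$ of $I$ (using the standing convention that ideals of definition are finitely generated), the $\cO_K$-algebra map $\cO_K[x, y] \to A$ sending $x_i \mapsto x_i$ and $y_j \mapsto y_j$ sends $(\varpi, y)$ into $I$ (because $\varpi \in I = (y_1, \ldots, y_m)$), so is continuous for the $(\varpi, y)$-adic and $I$-adic topologies; by $I$-adic completeness of $A$ it extends uniquely to a continuous $\cO_K$-algebra map $\psi \colon P \to A$, which is adic since $\psi((\varpi, y)) \cdot A = I$.

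The crux is surjectivity of $\psi$, which I view as the main obstacle. Given $a \in A$, the idea is to inductively construct $p_n \in P$ with $\psi(p_n) \equiv a \pmod{I^n}$ and $p_{n+1} - p_n \in (y)^n P \subseteq (\varpi, y)^n P$. The base case uses the surjectivity of $\cO_K[x] \to A/I$; the inductive step writes the error $a - \psi(p_n) \in I^n = (y)^n$ as $\sum_{|\beta|=n} b_\beta y^\beta$ with $b_\beta \in A$, invokes (a) again to approximate each $b_\beta$ modulo $I$ by a polynomial $q_\beta \in \cO_K[x]$, and sets $p_{n+1} := p_n + \sum_\beta q_\beta y^\beta$; the error then improves to $I^{n+1}$ because $(b_\beta - q_\beta) y^\beta \in I \cdot I^n$. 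The sequence $(p_n)$ is Cauchy in $P$, and its limit $p \in P$ satisfies $\psi(p) = a$ by continuity. I expect this bookkeeping---matching $I$-adic convergence in $A$ with $(\varpi, y)$-adic convergence in $P$ simultaneously---to be the only nontrivial point; the rest is essentially formal.
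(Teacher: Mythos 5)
Your proof is correct and follows essentially the same route as the paper: lift generators of $A/I$ and generators of $I$ to build an adic continuous map $\psi\colon P\to A$, then deduce surjectivity from surjectivity modulo $I$; the only difference is that the paper invokes \cite[Chapter I, Proposition 4.3.6]{FujiwaraKato} for that last step, whereas you reprove it by a successive-approximation argument, which works. One cosmetic slip in (b)~$\Rightarrow$~(a): $P/J^N$ is \emph{not} $(\cO_K/\varpi)[x,y]/(y)^N$ for $N>1$ (the ideal $J^N$ contains $\varpi^N$ but not $\varpi$); since $\varpi A\subseteq I$, the correct statement is that $A/I$ is a quotient of $P/(J^N+\varpi P)\simeq(\cO_K/\varpi)[x_1,\ldots,x_n,y_1,\ldots,y_m]/(y)^N$, which gives the same conclusion.
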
 

\begin{proof}
Suppose that $A/I$ is finitely generated over $\cO_K/\varpi$ and choose a surjection 
\[ 
    \varphi\colon (\cO_K/\varpi)[x_1, \ldots, x_n]\to A/I. 
\]
Write $I = (f_1, \ldots, f_m, \varpi)$, and let $\psi \colon P \to A$ be the unique continuous $\cO_K$-algebra homomorphism sending $x_i$ to prescribed lifts of $\varphi(x_i)$ and with $\psi(y_i)=f_i$. Such a homomorphism exists because $f_i$ are topologically nilpotent. Since $P$ is by definition the $(y_1, \ldots, y_m, \varpi)$-adic completion of $\cO_K[x_1, \ldots, x_n, y_1, \ldots, y_m]$, the map $\psi$ is adic. Therefore by \cite[Chapter I, Proposition 4.3.6]{FujiwaraKato} the map $\psi$ is surjective, and hence condition (b) is satisfied. 

For the other direction, note that there exists a $k\geqslant 1$ such that $(\psi(y_1^k), \ldots, \psi(y_m^k), \varpi) \subseteq I$. Then $P/(\psi(y_1^k), \ldots, \psi(y_m^k), \varpi)\simeq (\cO_K/\varpi)[x_1,\ldots,x_n,y_1,\ldots,y_m]/(y_1^k,\ldots,y_m^k)$ is a finitely generated $\cO_K/\varpi$-algebra, and hence so is its quotient $A/I$.
\end{proof}

\begin{definition}[{see \cite[Chapter II, Definition 9.6.5]{FujiwaraKato}}] \label{def:fsch}
    Let $A$ be a topological $\cO_K$-algebra which is complete and separated with respect to a finitely generated ideal.
    \begin{enumerate}[(a)] 
        \item We say that $A$ is \emph{topologically formally of finite type} if it satisfies any of the equivalent conditions of Proposition~\ref{prop:fml-lfft}, \emph{topologically of finite type} if it is formally topologically of finite type and adic over $\cO_K$ (equiv.\ an adic quotient of $\cO_K\langle x_1, \ldots, x_n\rangle$ for some $n\geqslant 0$), and \emph{admissible} if it is topologically of finite type and flat (equiv.\ torsion free) over $\cO_K$.
        \item We say that a formal scheme $\mf{X}$ over $\h_K$ is \emph{locally formally of finite type} (resp.\ \emph{locally of finite type}, resp.\ \emph{admissible}) if for all affine open covers $\{\Spf(A_i)\}$ (equiv.\ for a single such open cover) each $A_i$ is topologically formally of finite type (resp.\ topologically of finite type, resp.\ admissible).
        \item We say that a formal scheme $\mf{X}$ over $\h_K$ is \emph{(formally) of finite type} if it is locally (formally) of finite type and quasi-compact.
    \end{enumerate}
\end{definition}

We shall denote the category of formal schemes locally formally of finite type (resp.\ locally of finite type, resp.\ of finite type) over $\cO_K$ by $\cat{F Sch}_{\cO_K}^{\rm lfft}$ (resp.\ $\cat{FSch}_{\cO_K}^{\rm lft}$, resp.\ $\cat{FSch}_{\cO_K}^{\rm ft}$). The notions of (locally) formally of finite type, (locally) of finite type, and admissible are preserved under base change along $\h_K\to\h_{K'}$ for $K'$ a non-archimedean extension of $K$.

\medskip\paragraph{Admissible ideals}

For a formal scheme $\mf{X}$ we say that an ideal sheaf $\mc{I}\subseteq \h_\mf{X}$ is an \emph{ideal sheaf of definition} if it is adically quasi-coherent (see \cite[Chapter I, Definition 3.1.3]{FujiwaraKato}) and if for all affine open formal subschemes $U$ we have that $\mc{I}(U)$ is an ideal of definition of $\cO_{\mf{X}}(U)$. If $\mf{X}$ is quasi-compact and quasi-separated then the set of ideal sheaves of definition is cofiltering, and in particular non-empty (cf.\@ \cite[Chapter I, Corollary 3.7.12]{FujiwaraKato}). We say that an ideal sheaf $\mc{J}\subseteq \h_\mf{X}$ is \emph{admissible} if it is adically quasi-coherent, of finite type, and open (i.e.\@ locally contains an ideal of definition). For an admissible ideal sheaf $\mc{J}$ we denote by $\mf{X}(\mc{J})$ the scheme $(|\mf{X}|,\h_\mf{X}/\mc{J})$.

\medskip\paragraph{Admissible blowups}

Let $\mf{X}$ be a formal scheme and $\mc{J}$ an admissible ideal sheaf of $\mf{X}$. There then exists a final object amongst morphisms $\pi\colon \mf{X}'\to\mf{X}$ which are adic, proper, and satisfy $\mc{J}\h_{\mf{X}'}$ is invertible. We call this universal object the \emph{admissible blowup} of $\mf{X}$ relative to $\mc{J}$ and denote it by $\pi_\mc{J}\colon \mf{X}_\mc{J}\to \mf{X}$. We call the subscheme cut out by $\mc{J}$ the \emph{center} of the admissible blowup $\mf{X}'\to\mf{X}$. One can give an explicit description of this admissible blowup as in classical algebraic geometry (cf.\@ \cite[Chapter II, \S1.1.(a) and \S1.1.(b)]{FujiwaraKato}). Since admissible blowups are finite type and do not introduce torsion for an ideal of definition (see \cite[Chapter II, Corollary 1.1.6]{FujiwaraKato} for this latter claim), one sees that the properties defined in Definition~\ref{def:fsch}(bc) are stable under admissible blowups.

\medskip\paragraph{Strict transform}

If $\mf{X}_\mc{J}\to \mf{X}$ is an admissible blowup and $\mf{Y}\to \mf{X}$ an adic morphism, then $\mf{Y}_{\mf{X}_\mc{J}}\to \mf{Y}$ will, in general, not be an admissible blowup. That said, one can show (see \cite[Chapter II, \S1.2]{FujiwaraKato}) that the $\mc{J}$-torsion ideal sheaf  $\mc{K}=\h_{\mf{Y}_{\mf{X}_\mc{J}},\mc{J}\text{-tors}}$ is of finite type. One then considers the closed formal subscheme $\mf{Y}'$ of $\mf{Y}_{\mf{X}_{\mc{J}}}$ cut out by $\mc{K}$ which is called the \emph{strict transform} of $\mf{Y}\to \mf{X}$ along $\mf{X}_\mc{J}\to\mf{X}$. The map $\mf{Y}'\to \mf{Y}$ is isomorphic to the admissible blowup $\mf{Y}_{\mc{J}\h_{\mf{Y}}}$ (see \cite[Chapter II, Proposition 1.2.9]{FujiwaraKato}).

\medskip\paragraph{Underlying reduced subscheme}

For a formal scheme $\mf{X}$ there exists a unique ideal sheaf $\h_\mf{X}^\cc\subseteq \h_\mf{X}$ such that $\h_\mf{X}^\cc(\Spf(A))=A^\cc$ for all affine open $\Spf(A)\subseteq \mf{X}$. We call $\h_\mf{X}^\cc$ the \emph{ideal sheaf of topologically nilpotent elements}. The pair $\underline{\mf{X}}:=(\mf{X},\h_\mf{X}/\h_\mf{X}^\cc)$ defines a closed subscheme of $\mf{X}$ called the \emph{underlying reduced subscheme} of $\mf{X}$. For any ideal sheaf of definition $\mc{I}$ of $\mf{X}$ the map $\underline{\mf{X}}\to \mf{X}$ factorizes through $\mf{X}(\mc{I})$ and identifies $\underline{\mf{X}}$ with the underlying reduced subscheme of $\mf{X}(\mc{I})$.

\medskip\paragraph{Completions}

Let $\mf{X}$ be a formal scheme and let $\mc{J}\subseteq \h_\mf{X}$ be an admissible ideal sheaf. Consider the colimit $\mf{X}(\mc{J}^\infty)=\varinjlim \mf{X}(\mc{J}^n)$\footnote{Here we consider $\mf{X}(\mc{J}^n)$ as a \emph{pseudo-discrete formal scheme} as in \stacks{0AHY}.}\label{footnote:pd} in the category of topologically locally ringed spaces. Let us note that for all $n\geqslant 1$ the underlying topological space $\mf{X}(\mc{J}^n)$ is constant. Let us call it $X$. We then consider the sheaves $\h_{\mf{X}(\mc{J}^n)}$ on $X$ for all $n$. We have obvious surjections $\h_{\mf{X}(\mc{J}^i)}\to \h_{\mf{X}(\mc{J}^j)}$ for $i\geqslant j\geqslant 1$. Using \cite[Chapter I, Proposition 1.4.2]{FujiwaraKato} one shows that $\mf{X}(\mc{J}^\infty)$ is a formal scheme. If $\mf{X}=\Spf(A)$ and $J\subseteq A$ is the ideal corresponding to $\mc{J}$, then $\mf{X}(\mc{J}^\infty)$ is in fact equal to $\Spf(B)$ with $B$ the $J$-adic completion of $A$. 

The proof of the following representability result is left to the reader.

\begin{prop} \label{prop:completion-functor-description-2} 
    Let $\mf{X}$ be a formal scheme such that $\underline{\mf{X}}$ is locally Noetherian. Then, for any closed subset $Z$ of $|\mf{X}|$ the functor
    \begin{equation*}
        \widehat{\mf{X}}_{Z}\colon \cat{FSch}^{\rm op}\to\Set,\qquad \mf{X}'\mapsto \left\{f\in\Hom_{\cat{FSch}}(\mf{X}',\mf{X})\,:\,
        f(|\mf{X}'|)\subseteq Z \right\} 
    \end{equation*}
    is representable. Moreover, if $|\mf{X}(\mc{J})|=Z$ for an admissible ideal sheaf $\mc{J}\subseteq \h_\mf{X}$ then it is represented by $\mf{X}(\mc{J}^\infty)$. 
\end{prop}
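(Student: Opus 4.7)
The plan is to proceed in two steps: first verify the moreover clause affine-locally via a direct calculation with the $J$-adic completion; then deduce the general representability by showing that locally every closed subset $Z$ is of the form $V(\mc{J})$ for an admissible $\mc{J}$. Uniqueness of representing objects then allows the local representing formal schemes to be glued canonically, independently of the local choices.

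For the moreover clause, we may assume $\mf{X} = \Spf(A)$ and $\mc{J} = \widetilde{J}$ for a finitely generated open ideal $J \subseteq A$, so that by the affine description of $\mf{X}(\mc{J}^\infty)$ recalled just before the statement we have $\mf{X}(\mc{J}^\infty) = \Spf(\widehat{A}_J)$. A morphism $\mf{X}' \to \mf{X}$ corresponds, affine-locally on $\mf{X}'$, to a continuous $\cO_K$-algebra homomorphism $\varphi \colon A \to B$, and the underlying map factors through $V(J)$ iff every open prime of $B$ contains $\varphi(J)$, iff every element of $\varphi(J)$ is topologically nilpotent. Since $J$ is finitely generated, this is in turn equivalent to $\varphi(J^n) \to 0$ in $B$, and by completeness of $B$ to $\varphi$ extending uniquely to a continuous homomorphism $\widehat{A}_J \to B$. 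This is exactly the universal property of $\Spf(\widehat{A}_J)$.

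For the local existence of $\mc{J}$, work on an affine open $\Spf(A) \subseteq \mf{X}$. Via the canonical identification $|\Spf(A)| = |\Spec(A/A^\cc)|$ (valid since every open prime of $A$ contains $A^\cc = \sqrt{I}$ for any ideal of definition $I$), the closed subset $Z \cap |\Spf(A)|$ corresponds to a radical ideal $\overline{J} \subseteq A/A^\cc$. The Noetherianness hypothesis on $\underline{\mf{X}}$ forces $\overline{J}$ to be finitely generated. Lift a finite generating set to $A$, obtaining a finitely generated ideal $J_0 \subseteq A$, and set $J := J_0 + I$ for a finitely generated ideal of definition $I \subseteq A$. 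Then $J$ is finitely generated and open (since $I \subseteq J$), and its image in $A/A^\cc$ is $\overline{J}$ (since $I \subseteq A^\cc$), so $V(J) = Z \cap |\Spf(A)|$. The associated ideal sheaf $\widetilde{J}$ is thus admissible with the desired vanishing locus, and by the moreover clause it represents the functor $\widehat{\Spf(A)}_{Z \cap |\Spf(A)|}$.

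The main obstacle is this last step, where the local Noetherianness of $\underline{\mf{X}}$ is indispensable: without it the radical ideal cutting out $Z$ need not be finitely generated, and one could not produce a finite-type, and therefore admissible, ideal sheaf locally cutting out $Z$. Once this is in place, applying the moreover clause on each piece of an affine cover yields local representing formal schemes, which glue by uniqueness of representing objects to a global object representing $\widehat{\mf{X}}_Z$ and this gluing is canonical, independently of the local choices of $\mc{J}$.
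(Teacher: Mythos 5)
Your proof is correct; the paper explicitly leaves this argument to the reader, and yours is precisely the intended one: the affine computation of the universal property of the $J$-adic completion (matching the description of $\mf{X}(\mc{J}^\infty)$ given just before the statement) for the moreover clause, followed by the local construction of a finitely generated open ideal cutting out $Z$ — which is exactly where the local Noetherianity of $\underline{\mf{X}}$ enters — and a standard gluing of the local representing objects via compatibility with open formal subschemes. The only cosmetic slip is that the functor is defined on all formal schemes, so the affine-local datum is a continuous ring homomorphism rather than a continuous $\cO_K$-algebra homomorphism; this changes nothing.
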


We call the formal scheme  $\widehat{\mf{X}}_Z$ the \emph{completion of $\mf{X}$ along $Z$}. It is clear that if $\mf{Y}$ is an open formal subscheme of $\mf{X}$ then $\widehat{\mf{U}}_{U\cap Z}$ is the preimage of $\mf{U}$ in $\widehat{\mf{X}}_Z$. By reducing to the affine case using \cite[Chapter I, Proposition 3.7.11]{FujiwaraKato} one can show that if $\mf{X}$ is moreover quasi-compact and quasi-separated, then every closed subset $Z$ is of the form $|\mf{X}(\mc{J})|$ for some admissible ideal sheaf $\mc{J}\subseteq \h_{\mf{X}}$.

The following result will play an important role in the proof of Proposition \ref{prop:spec-good-case}. In its statement, we use the notion of \emph{adically locally of finite presentation} as in \cite[Chapter I, Definition 5.3.1]{FujiwaraKato}. Note that by \cite[Chapter I, Corollary 2.2.4]{FujiwaraKato} and \cite[Chapter 0, Corollary 9.2.9]{FujiwaraKato} this condition is automatic for a morphism of admissible formal schemes over $\h_K$.

\begin{prop} \label{prop:alt-refined-topological-invariance}
    Let $\mf{Y}\to\mf{X}$ be a morphism adically locally of finite presentation of formal schemes where $\underline{\mf{X}}$ is locally Noetherian. Let $Z$ be a closed subset of $\mf{X}$ and $\mc{Z}$ any closed subscheme of $\mf{X}$ with $|\mc{Z}|=Z$. Then if $\mf{Y}_\mc{Z}\to \mc{Z}$ is finite (resp.\@ a disjoint union of finite morphisms) then, the same is true for $\mf{Y}_{\widehat{\mf{X}}_Z}\to\widehat{\mf{X}}_Z$.
\end{prop}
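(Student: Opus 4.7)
The plan is to reduce to the affine case and then separately handle the finite and disjoint-union conclusions.

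Both conclusions are local on the target, so we may assume $\mf{X} = \Spf(A)$ is affine. By the discussion following Proposition~\ref{prop:completion-functor-description-2}, pick an admissible ideal sheaf $\mc{J} \subseteq \h_\mf{X}$ with $|\mf{X}(\mc{J})| = Z$, so that $\widehat{\mf{X}}_Z = \varinjlim_n X_n$ for $X_n := \mf{X}(\mc{J}^n)$; correspondingly $\mf{Y}_{\widehat{\mf{X}}_Z} = \varinjlim_n Y_n$ with $Y_n := \mf{Y} \times_\mf{X} X_n$.

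For the finite case, the maps $\mf{Y}_\mc{Z}\to \mc{Z}$ and $Y_n\to X_n$ share the underlying continuous map $f^{-1}(Z)\to Z$ and are both locally of finite type. Since finiteness of $\mf{Y}_\mc{Z}\to\mc{Z}$ is equivalent to being proper and quasi-finite (EGA~IV, 18.12.4), and both of these properties depend only on the underlying continuous map (using that $X_1\hookrightarrow X_n$ is a nilpotent thickening preserving residue fields), each $Y_n\to X_n$ is finite. Writing $Y_n = \Spec(C_n)$ with $C_n$ a finite $A/\mc{J}^n$-module, the inverse limit $\widehat{C} := \lim_n C_n$ is $\mc{J}$-adically complete with $\widehat{C}/\mc{J}\widehat{C} = C_1$. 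A topological Nakayama argument---lifting a finite generating set of $C_1$ over $A/\mc{J}$ to $\widehat{C}$ and expressing arbitrary elements as $\mc{J}$-adically convergent $\widehat{A}$-linear combinations---then shows that $\widehat{C}$ is finite over $\widehat{A} := \lim_n A/\mc{J}^n$; this step is the hard part of the proof, and it goes through without a Noetherian hypothesis on $A$ because $\widehat{C}$ is $\mc{J}$-adically complete by construction. Hence $\mf{Y}_{\widehat{\mf{X}}_Z} = \Spf(\widehat{C}) \to \Spf(\widehat{A}) = \widehat{\mf{X}}_Z$ is finite.

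For the disjoint-union case, write $\mf{Y}_\mc{Z} = \bigsqcup_i W_i$ with $W_i\to \mc{Z}$ finite. Since $|\mf{Y}_{\widehat{\mf{X}}_Z}| = |\mf{Y}_\mc{Z}|$, the clopen partition by the $|W_i|$ lifts to a clopen partition $\mf{Y}_{\widehat{\mf{X}}_Z} = \bigsqcup_i W_i'$. For each $i$, since $|W_i|$ is open in $f^{-1}(Z)$, we can choose an open formal subscheme $U_i\subseteq \mf{Y}$ with $|U_i|\cap f^{-1}(Z) = |W_i|$ by taking the union over $y\in |W_i|$ of quasi-compact open neighborhoods $V_y$ of $y$ in $|\mf{Y}|$ satisfying $V_y\cap f^{-1}(Z)\subseteq |W_i|$. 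Then $(U_i)_\mc{Z} = W_i$ is finite over $\mc{Z}$ and $(U_i)_{\widehat{\mf{X}}_Z} = W_i'$, so applying the already-established finite case to $U_i\to \mf{X}$ gives that each $W_i'\to \widehat{\mf{X}}_Z$ is finite, as desired.
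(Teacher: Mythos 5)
Your proof is correct and structurally parallel to the paper's: both arguments rest on (i) the topological invariance of finiteness under thickenings for morphisms locally of finite type, and (ii) the passage from the discrete levels to the formal completion, with the disjoint-union case handled in both by transporting the clopen decomposition along the homeomorphism $|\mf{Y}_{\widehat{\mf{X}}_Z}|=|\mf{Y}_\mc{Z}|$. The one genuine difference is in step (ii): the paper simply cites \cite[Chapter I, Proposition 4.2.3]{FujiwaraKato}, which says that finiteness of an adic morphism of formal schemes may be checked modulo an ideal of definition, whereas you prove exactly this statement by hand via successive approximation. Your ``hard part'' does go through: since $\mc{J}$ is finitely generated, $\widehat{C}=\lim_n C_n$ is $\mc{J}$-adically complete with $\widehat{C}/\mc{J}^n\widehat{C}=C_n$ (\stacks{09B8}), which is what makes the topological Nakayama argument converge without any Noetherian hypothesis; so what you gain is self-containedness at the cost of length. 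Two small points of care: the phrase that properness ``depends only on the underlying continuous map'' should be read as stability of separatedness and universal closedness under thickenings (base changes of a thickening are again thickenings), which is what \stacks{0BPG} -- the paper's reference -- packages; and $C_n$ should be a finite $A/\mc{J}^n$-\emph{algebra} (finite as a module). Neither affects the validity of the argument.
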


\begin{proof} 
Suppose first that the map is finite. Without loss of generality, we may assume that $\mc{Z}=Z$ with the reduced scheme structure. By working locally on the target, we may assume that $\mf{X}=\Spf(A)$ and that $Z_\mathrm{red}=\Spec(A/J)_\mathrm{red}$ for some admissible ideal $J$ of $A$. Set $T=\Spec(A/J)$. Note then that $\mf{Y}_T\to T$ is, by assumption, a morphism of schemes locally of finite presentation. By assumption, we have that the pullback of this map along $T_\mathrm{red}\to T$ is finite, and thus it is itself finite by \stacks{0BPG}. Note though that since $J^\Delta$ is an ideal sheaf of definition of $\widehat{\mf{X}}_Z$ that by \cite[Chapter I, Proposition 4.2.3]{FujiwaraKato} to verify $\mf{Y}_{\widehat{\mf{X}}_Z}\to\widehat{\mf{X}}_Z$ is finite, it suffices to check this after base change along $T\to\widehat{\mf{X}}_Z$ from where the claim follows. The second claim is clear by the above discussion since $\mf{Y}_{\widehat{\mf{X}}_{Z}}\to \mf{Y}_{Z_\mathrm{red}}$ is a homeomorphism, and so if $\mf{Y}_{Z_\mathrm{red}}$ is a disjoint union of clopen subsets finite over $Z_\mathrm{red}$, then $\mf{Y}_{\widehat{\mf{X}}_Z}$ is a disjoint union of clopen subsets which, by the above, must be finite over $\widehat{\mf{X}}_Z$.
\end{proof}

\subsection{Generic fibers of formal schemes}
\label{ss:generic-fiber} 

In this subsection we discuss the notion of the \emph{rigid generic fiber} of a formal scheme locally formally of finite type over $\cO_K$, which is a rigid $K$-space. We then note several properties of the generic fiber construction.

We first recall Raynaud's equivalence as developed by Fujiwara--Kato in \cite{FujiwaraKato}, which in particular gives the construction for formal schemes locally of finite type over $\cO_K$. Let $A$ be a topologically finite type $\cO_K$-algebra. The algebra $A_K = A[\frac 1 \pi]$ is then an affinoid $K$-algebra, i.e.\ an algebra topologically of finite type over $K$. The subring $A^\circ_K\subseteq A_K$ of powerbounded elements coincides with the integral closure of (the image of) $A$ in $A_K$ (see \cite[Chapter II, Corollary A.4.10]{FujiwaraKato}). Combining \cite[Chapter II]{FujiwaraKato} and \cite[Chapter II, Theorem A.5.1]{FujiwaraKato}, there exists a unique functor
\[
    (-)_\eta \colon \cat{FSch}^{\rm ft}_{\cO_K} \to \cat{Rig}_K^{\rm qcqs}
\]
such that $\Spf(A)_\eta = \Spa(A_K)$ for every topologically finite type $\cO_K$-algebra $A$, and which respects open immersions and open covers. This functor naturally extends to a functor 
\[
    (-)_\eta \colon \cat{FSch}^{\rm lft}_{\cO_K} \to \cat{Rig}_K^{\rm qs}, 
\]
and for $\mf{X}$ locally of finite type over $\cO_K$, the rigid $K$-space $\mf{X}_\eta$ is called the \emph{rigid generic fiber} of $\mf{X}$. Furthermore, $(-)_\eta$ sends the class $W$ of admissible blowups to isomorphisms and induces equivalences of categories
\begin{equation}\label{eq:Raynaud-equiv}
    \cat{FSch}^{\rm adm, qc}_{\cO_K}[W^{-1}] \underset{\rm incl}\isomto \cat{FSch}^{\rm ft}_{\cO_K}[W^{-1}] \underset{(-)_\eta}\isomto \cat{Rig}_K^{\rm qcqs}.
\end{equation}
Here $\cat{FSch}^{\rm adm, qc}_{\cO_K}$ consists of quasi-compact admissible formal schemes over $\h_K$, and $(-)[W^{-1}]$ denotes the localization with respect to $W$, i.e.\ the category obtained by formally inverting all admissible blowups. By a \emph{formal model} of a rigid $K$-space $X$ we shall mean a formal scheme $\mf{X}$ locally of finite type over $\cO_K$ together with an isomorphism $\mf{X}_\eta\simeq X$. By \cite[Chapter II, Proposition 2.1.10]{FujiwaraKato}, for any quasi-compact and quasi-separated rigid $K$-space, the category of \emph{admissible} formal models of $X$ is (equivalent to) a cofiltering poset.

If $X$ is a quasi-compact and quasi-separated rigid $K$-space, then the construction of the rigid generic fiber allows one to identify the locally topologically ringed space $(X,\h_X^+)$ as $\varprojlim\, (\mf{X},\h_\mf{X})$ where $\mf{X}$ runs over admissible formal models of $X$. In particular, for each admissible formal model $\mf{X}$ of $X$ we have a morphism of locally topologically ringed spaces
\[ 
    \op{sp}_\mf{X} \colon (X,\h_X^+)\to (\mf{X},\h_\mf{X}).
\]
called the \emph{specialization map} for $\mf{X}$. For $\mf{X}=\Spf(A)$ affine, the underlying map of topological spaces sends a valuation $\nu\colon A_K\to \Gamma\cup\{0\}$ to the (open) prime ideal $\{x\in A\,:\, \nu(x)<1\}$ and the map on global sections is the natural map $\h_\mf{X}(\mf{X})=A\to A_K^\circ =\h_X(X)^+$.

Since $|\mf{X}|=|\mf{X}_k|$ we shall often implicitly treat $\op{sp}_\mf{X}$ as a map of topological spaces $|X|\to |\mf{X}_k|$. This map is continuous, quasi-compact, closed, and surjective (see \cite[Chapter II, Theorem 3.1.2]{FujiwaraKato} and \cite[Chapter II, Proposition 3.1.5]{FujiwaraKato}). If $\varphi\colon \mf{X}'\to \mf{X}$ is a morphism in $\cat{FSch}_{\h_K}^\mathrm{ft}$ then the diagram
\begin{equation*}
    \xymatrix{(X',\h_{X'}^+)\ar[d]_{\varphi_\eta}\ar[r]^{\op{sp}_{\mf{X}'}} & (\mf{X}',\h_{\mf{X}'})\ar[d]^\varphi\\ (X,\h_X^+)\ar[r]_{\op{sp}_\mf{X}} & (\mf{X},\h_\mf{X})\\ }
\end{equation*}
commutes. Moreover, for any open subset $\mf{U}$ of $\mf{X}$ the induced map $\mf{U}_\eta\to \mf{X}_\eta$ is an open immersion with image $\op{sp}_\mf{X}^{-1}(\mf{U})$ (see \cite[Chapter II, Proposition 3.1.3]{FujiwaraKato}). It follows that the definition of the specialization map may be extended to formal schemes $\mf{X}$ locally of finite type over $\h_K$, and that this extension enjoys also the property of being continuous, quasi-compact, closed and surjective if $\mf{X}$ is admissible.

In order to extend the definition of the rigid generic fiber to the category $\cat{FSch}^{\rm lfft}_{\cO_K}$, we need the following result. 
\begin{prop}[{\cite[Chapter II, \S9.6.(b)]{FujiwaraKato}}]\label{lem:generic-fiber-functor-of-points-1}
    Let $\mf{X}$ be a formal scheme locally formally of finite type over $\cO_K$. Then, the functor
    \begin{equation*}
        \left(\cat{Rig}_K^\mathrm{qcqs}\right)^{\rm op}\to \cat{Set},\qquad Z\mapsto \varinjlim\Hom_{\cO_K}(\mf{Z},\mf{X})
    \end{equation*}
    where $\mf{Z}$ runs over the admissible formal models of $Z$, is representable by a rigid $K$-space $\mf{X}_\eta$.  
\end{prop}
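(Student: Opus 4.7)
My approach is to first construct $\mf{X}_\eta$ in the affine case via a Berthelot-type colimit of admissible affinoids, then glue. Suppose $\mf{X} = \Spf(A)$ with $A$ an adic quotient of $\cO_K\langle x_1, \ldots, x_n\rangle\llbracket y_1, \ldots, y_m\rrbracket$ as provided by Proposition~\ref{prop:fml-lfft}. For each integer $N \geq 1$, I form the $\cO_K$-algebra
$$A_N := \bigl(A\langle z_1, \ldots, z_m\rangle/(\varpi z_1 - y_1^N, \ldots, \varpi z_m - y_m^N)\bigr)/(\varpi\text{-torsion}).$$
The relation $\varpi z_i = y_i^N$ collapses the $(\varpi, y)$-adic topology to the $\varpi$-adic topology in the quotient, so $A_N$ is adic and (after killing $\varpi$-torsion) admissible in the sense of Definition~\ref{def:fsch}. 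The formal schemes $\mf{X}_N := \Spf(A_N)$ come equipped with natural transition maps $\mf{X}_N \to \mf{X}_{N+1}$ over $\mf{X}$ (corresponding to $z_i^{(N+1)} \mapsto y_i z_i^{(N)}$), which induce open immersions on Raynaud generic fibers $(\mf{X}_N)_\eta \hookrightarrow (\mf{X}_{N+1})_\eta$; intuitively, $(\mf{X}_N)_\eta$ is the locus $|y_i| \leq |\varpi|^{1/N}$. I then define $\mf{X}_\eta := \bigcup_N (\mf{X}_N)_\eta$ as a rigid $K$-space.

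To verify representability, let $Z \in \cat{Rig}_K^\mathrm{qcqs}$ with admissible formal model $\mf{Z}$. On each affine open $\Spf(B) \subseteq \mf{Z}$, a morphism $\mf{Z} \to \mf{X}$ is given by a continuous $\cO_K$-algebra map $\varphi \colon A \to B$. Since $\varphi(y_i) \in B^\cc$ and $B$ is $\varpi$-adic admissible, one has $\varphi(y_i)^N \in \varpi B$ for all $N$ sufficiently large; quasi-compactness of $\mf{Z}$ lets us choose a single $N$ uniformly on a finite affine cover, and $\varpi$-torsion-freeness of $B$ yields a unique factorization $A_N \to B$, hence a morphism $\mf{Z} \to \mf{X}_N$, producing on generic fibers a map $Z \to (\mf{X}_N)_\eta \subseteq \mf{X}_\eta$. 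Independence of $N$ is immediate from the transition maps, and independence of the choice of $\mf{Z}$ follows from cofilteredness of admissible models plus the fact that admissible blowups are inverted by $(-)_\eta$. Conversely, any morphism $Z \to \mf{X}_\eta$ with $Z$ quasi-compact factors through some $(\mf{X}_N)_\eta$, and Raynaud's equivalence \eqref{eq:Raynaud-equiv} produces a morphism $\mf{Z}' \to \mf{X}_N$ from an admissible formal model $\mf{Z}'$ of $Z$; composing with $\mf{X}_N \to \mf{X}$ gives the required class in the colimit.

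Finally, to extend to a general $\mf{X}$ locally formally of finite type, I glue the affine construction along principal open immersions $\Spf(A_f) \subseteq \Spf(A)$, the key compatibility being that the formation $A \mapsto A_N$ commutes with such localization. Representability is ultimately a local question on $\mf{X}$ because the functor $Z \mapsto \varinjlim \Hom_{\cO_K}(\mf{Z}, \mf{X})$ is a sheaf on $\cat{Rig}_K^\mathrm{qcqs}$, inherited from the sheaf property of $\h^+$ and cofilteredness of admissible models. The main technical obstacle I anticipate is the well-definedness of the bijection on the colimit side --- specifically, tracking how the integer $N$ interacts with admissible blowups of $\mf{Z}$ and verifying that the backward map does not depend on the choice of admissible formal model --- all of which rests on the cofilteredness of admissible models \cite[Chapter II, Proposition 2.1.10]{FujiwaraKato} together with Raynaud's equivalence \eqref{eq:Raynaud-equiv}.
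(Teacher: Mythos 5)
Your construction is essentially the Berthelot-type exhaustion that the paper itself records: the proposition is quoted from Fujiwara--Kato, and the paper's own Construction~\ref{const:generic-fiber-affine} and Lemma~\ref{lem:concrete-generic-fiber-description} describe $\mf{X}_\eta$ in the affine case as $\varinjlim_J \Spa(B(J))$, where your $(\mf{X}_N)_\eta$ coincides with $\Spa(B(J^N))$ for $J=(y_1,\ldots,y_m,\varpi)$ (both cut out the locus $|y_i|\leqslant|\varpi|^{1/N}$), and the verification of the universal property via continuity, quasi-compactness, and Raynaud's equivalence is the same. The argument is correct and matches the paper's approach.
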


We call $\mf{X}_\eta$ the \emph{rigid generic fiber} of $\mf{X}$. If $\mf{X}$ is locally of finite type over $\cO_K$, this agrees with the previous definition of $\mf{X}_\eta$. The association $\mf{X}\mapsto \mf{X}_\eta$ is functorial in $\mf{X}$ and commutes with finite limits and disjoint unions. Moreover, it sends admissible blowups to isomorphisms. 

The construction below makes the rigid generic fiber more explicit in the affine case. 

\medskip

\begin{construction}[The rigid generic fiber of an affine formal scheme] \label{const:generic-fiber-affine}
Let $\mf{X}=\Spf(B)$ for a topologically formally of finite type $\cO_K$-algebra $B$, and let $J=(b_1,\ldots,b_m)$ be an ideal of definition of $B$. We denote by $B[\frac{J}{\varpi}]$ the affine blowup algebra \stacks{052Q}, i.e.\ the image of $B[x_1, \ldots, x_m]/(\pi x_i - b_i)$ in $B[\frac 1 \varpi]$; it is independent of the choice of generators of $J$. Let $B\langle\frac{J}{\varpi}\rangle$ be the $J$-adic completion of $B[\frac{J}\varpi]$, which is an admissible $\cO_K$-algebra on which the $J$-adic topology coincides with the $\varpi$-adic topology. The map $B\to B\langle \frac J \varpi \rangle$ is continuous, and the morphism $\Spf(B\langle\frac{J}{\varpi}\rangle)\to\Spf(B)$ induces a map of rigid $K$-spaces $\Spf(B\langle \frac J \varpi \rangle)_\eta\to \Spf(B)_\eta$. Writing $B(J) = B\langle\frac{J}{\varpi}\rangle [\frac 1 \varpi]$, we have $\Spf(B\langle \frac J \varpi \rangle)_\eta = \Spa(B(J))$.\footnote{Note that $\Spa(B(J))$ may also be described as the adic spectrum of a fairly simple non-complete Huber pair. Namely, $\Spa(B(J))$ is equal to $\Spa(B[\frac{1}{\varpi}],B[\frac{J}{\varpi}]^\sim)$ where $B[\frac{1}{\varpi}]$ has the unique structure of a Huber ring so that $B[\frac{J}{\varpi}]$ equipped with the $J$-adic topology is an open subring, and where $B[\frac{J}{\varpi}]^\sim$ denotes the integral closure of $B[\frac{J}{\varpi}]$ in $B[\frac{1}{\varpi}]$.}

For $J'\subseteq J$, we have an inclusion $B[\frac{J'}{\varpi}]\subseteq B[\frac{J}{\varpi}]$ and consequently a morphism $\Spf(B[\frac{J}{\varpi}])\to\Spf(B[\frac{J'}{\varpi}])$ over $\Spf(B)$. The induced morphism $\Spa(B(J))\to \Spa(B(J'))$ is an isomorphism onto a rational open domain of $\Spa(B(J'))$. Thus, the inductive system $\{\Spa(B(J))\}$  indexed by all finitely generated ideals of definition of $B$ gives a well-defined adic space $\varinjlim_J \Spa(B(J))$. Since this system admits compatible maps to $\mf{X}_\eta$, we obtain a morphism of rigid $K$-spaces
\begin{equation} \label{eqn:limit-BJ}
    \varinjlim_J \Spa(B(J))\to \mf{X}_\eta.
\end{equation}
\end{construction}

\medskip

One then has the following concrete description of the generic fiber $\mf{X}_\eta$.

\begin{lem}[{\cite[Chapter II, Remark 9.6.3]{FujiwaraKato}}]\label{lem:concrete-generic-fiber-description}
     With notation as in Construction \ref{const:generic-fiber-affine} the map \eqref{eqn:limit-BJ} is an isomorphism.
\end{lem}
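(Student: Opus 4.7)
My approach is to verify that both sides of the map represent the same functor on $\cat{Rig}_K^\mathrm{qcqs}$. Indeed, by Proposition~\ref{lem:generic-fiber-functor-of-points-1}, we know that $\mf{X}_\eta$ represents $Z \mapsto \varinjlim_{\mf{Z}}\Hom_{\cO_K}(\mf{Z},\mf{X})$, where $\mf{Z}$ ranges over admissible formal models of $Z$, and it will suffice to exhibit the natural map as a bijection of such $Z$-points.

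For the left-hand side, I would first observe that since the transition morphisms $\Spa(B(J)) \to \Spa(B(J'))$ for $J' \subseteq J$ are (quasi-compact) open immersions, the colimit is a rising union of quasi-compact opens, and the system is filtered (any two ideals of definition $J_1, J_2$ are dominated by $J_1 \cdot J_2$, which is again an ideal of definition). Consequently, for $Z \in \cat{Rig}_K^\mathrm{qcqs}$, any morphism $Z \to \varinjlim_J \Spa(B(J))$ factors through some $\Spa(B(J))$, giving
\[
    \Hom(Z,\varinjlim_J \Spa(B(J))) \;=\; \varinjlim_J \Hom(Z,\Spa(B(J))).
\]
Since $\Spa(B(J)) = \Spf(B\langle J/\varpi\rangle)_\eta$ and $B\langle J/\varpi\rangle$ is admissible over $\cO_K$, Raynaud's equivalence \eqref{eq:Raynaud-equiv} identifies this further with $\varinjlim_{J,\mf{Z}} \Hom_{\cO_K}(\mf{Z},\Spf(B\langle J/\varpi\rangle))$. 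The natural map of the lemma thereby corresponds to composition with the continuous $\cO_K$-algebra maps $B \to B\langle J/\varpi\rangle$, landing in $\varinjlim_\mf{Z}\Hom_{\cO_K}(\mf{Z},\mf{X})$.

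For injectivity, given $\mf{Z}$ admissible and two continuous $\cO_K$-algebra maps $B\langle J/\varpi\rangle \rightrightarrows \cO_\mf{Z}(\mf{Z})$ which agree on $B$, they must agree on every $b_i/\varpi$ (for $b_i$ a generator of $J$) because $\cO_\mf{Z}(\mf{Z})$ is $\varpi$-torsion-free, and they agree on a topologically generating subring, hence are equal.

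For surjectivity (which I expect to be the main point), start with a continuous $\cO_K$-algebra map $B \to A := \cO_\mf{Z}(\mf{Z})$ for $\mf{Z}$ an admissible affine formal model. Fix any ideal of definition $J_0 = (b_1,\ldots,b_m)$ of $B$; since each $b_i$ is topologically nilpotent in $B$, its image in $A$ is topologically nilpotent in the $\varpi$-adic topology, so $(J_0\cdot A)^N \subseteq \varpi A$ for $N$ sufficiently large. Setting $J := J_0^N$, every generator of $J$ then maps into $\varpi A$, so the map $B \to A$ extends uniquely to $B[J/\varpi] \to A$ (sending each $b_{i_1}\cdots b_{i_N}/\varpi$ to the unique element $c/\varpi \in A$ with $\varpi\cdot c/\varpi$ equal to the image of $b_{i_1}\cdots b_{i_N}$). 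Since $A$ is $\varpi$-adically complete and the $J$-adic topology on $B[J/\varpi]$ coincides with the $\varpi$-adic topology (by the construction of $B\langle J/\varpi\rangle$), this map extends uniquely and continuously to $B\langle J/\varpi\rangle \to A$, providing the required lift. The main obstacle is precisely this step; it rests on the interplay between the topological nilpotency of $J_0$ in $B$ and the admissibility (hence $\varpi$-adic nature) of $A$, which is what forces the existence of a single $J$ doing the job.
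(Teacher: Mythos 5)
The paper does not prove this lemma at all --- it is quoted directly from Fujiwara--Kato \cite[Chapter II, Remark 9.6.3]{FujiwaraKato} --- so your proposal is being measured against the standard argument rather than against anything in the text. Your strategy (identify both sides as the same functor on $\cat{Rig}_K^{\rm qcqs}$, using Proposition~\ref{lem:generic-fiber-functor-of-points-1} for the target and quasi-compactness plus Raynaud's equivalence for the source, then check injectivity via $\varpi$-torsion-freeness and surjectivity via topological nilpotence of the images of generators of $J_0$) is the right one, and the key computation $(J_0\cdot A)^N\subseteq\varpi A$, which produces a single $J=J_0^N$ through which the given map factors, is indeed the heart of the matter.

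There is one concrete gap: in the surjectivity step you take ``$\mf{Z}$ an admissible \emph{affine} formal model,'' but an arbitrary element of $\varinjlim_{\mf{Z}}\Hom_{\cO_K}(\mf{Z},\mf{X})$ is represented by a map from some admissible formal model $\mf{Z}$ that need not be affine --- even for $Z$ affinoid, admissible blowups of an affine model are not affine, and the affine models are not cofinal in the system. Working with $A=\cO_{\mf{Z}}(\mf{Z})$ for non-affine $\mf{Z}$ is also not innocuous, since $\varpi$-adic completeness of global sections is not automatic. The repair is routine but should be said: cover the quasi-compact $\mf{Z}$ by finitely many affines $\Spf(A_i)$, obtain on each an $N_i$ with $(J_0A_i)^{N_i}\subseteq\varpi A_i$, take $N=\max_i N_i$ and $J=J_0^N$, construct the lifts $\Spf(A_i)\to\Spf(B\langle\frac{J}{\varpi}\rangle)$ as you do, and then glue them using exactly the uniqueness statement you proved (applied on affine opens of the overlaps). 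A second, smaller point worth a sentence is that passing from continuous $\cO_K$-algebra maps $B\to\Gamma(\mf{Z},\cO_{\mf{Z}})$ back to morphisms of formal schemes $\mf{Z}\to\Spf(B)$ uses the universal property of $\Spf$ for a non-affine source, which holds in the Fujiwara--Kato framework but is being used implicitly throughout. With these patches the argument is complete.
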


Let us note that for any given ideal of definition $J_0$ of $B$ that the set $\{J_0^n\}$ is cofinal in the set of all ideals of definition of $B$. In particular, in Lemma \ref{lem:concrete-generic-fiber-description} we may replace $\varinjlim_J \Spa(B(J))$ with $\varinjlim_n \Spa(B(J_0^n))$. We will often do this without comment below.

With this, we can give a more concrete description of the functor of points of the generic fiber in the situation dictated in Construction \ref{const:generic-fiber-affine}. It will enable us to show easily that an \'etale morphism of formal schemes induces an \'etale morphism on rigid generic fibers.

\begin{construction} \label{const:generic-fiber-affine-functor-of-points}
Let $\mf{X}=\Spf(B)$ and $\mf{X}'=\Spf(B')$ be affine objects of $\cat{FSch}^\mathrm{lfft}_{\h_K}$ and let $\mf{X}'\to\mf{X}$ be a morphism over $\h_K$. Let $J$ be an ideal of definition of $B$ and let $J'$ be an ideal of deifnition of $B'$ such that $JB\subseteq J'$ (which exists since $B\to B'$ is continuous). It is then easy to see that under the map $\mf{X}'_\eta\to\mf{X}_\eta$ that $\Spa(B((J')^n))$ maps into $\Spa(B(J^n))$ for all $n$.

Let $(R,R^+)$ be a Huber $(K,\h_K)$-algebra. Then, any morphism of adic spaces $\Spa(R,R^+)\to \mf{X}_\eta$ over $(K,\h_K)$ must factorize through $\Spa(B(J^n))$ for some $n$, and thus defines a map of Huber pairs $(B(J^n),B(J^n)^\circ)\to (R,R^+)$ which, in turn, defines a continuous map $B\to R^+$ of $\h_K$-algebras independent of the choice of $n$. Moreover, this map must send $\varpi$ to an element of $R^\times$. Note that one has a natural map
\begin{equation*}
    \Hom_{\mf{X}_\eta}(\Spa(R,R^+),\mf{X}'_\eta)\to \Hom_B(B',R^+).
\end{equation*}
Indeed, any map $\Spa(R,R^+)\to \mf{X}'_\eta$ must factorize through $\Spa(B((J')^m))$ for some $m\geqslant n$ and so defines a map of Huber pairs $(B((J')^m),B((J')^m)^\circ)\to (R,R^+)$. This then defines a continuous map $B'\to R^+$ of $B$-algebras which is independent of $m$.
\end{construction} 

\begin{lem} \label{lem:generic-fiber-functor-of-points-2}
    With notation as in Construction \ref{const:generic-fiber-affine-functor-of-points}, the map
    \begin{equation*}
        \Hom_{\mf{X}_\eta}(\Spa(R,R^+),\mf{X}'_\eta)\to\Hom_B(B',R^+)
    \end{equation*}
    is a functorial bijection. Moreover,
    \begin{equation*}
        \Hom_B(B',R^+)=\varinjlim_{R_0}\Hom_{\mf{X}}(\Spf(R_0),\mf{X}')
    \end{equation*}
    as $R_0$ runs over the open and bounded $B$-subalgebras of $R^+$.
\end{lem}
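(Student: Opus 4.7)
The forward map is the one of Construction~\ref{const:generic-fiber-affine-functor-of-points}; the main task is to construct an explicit inverse. Starting from a continuous $B$-algebra homomorphism $\varphi\colon B'\to R^+$, fix a finitely generated ideal of definition $J'$ of $B'$ with $JB'\subseteq J'$. Continuity of $\varphi$ forces $\varphi(J')\subseteq R^\cc$, and since $\varpi$ is a topologically nilpotent unit of $R$, for $n$ sufficiently large one has $\varphi((J')^n)\subseteq \varpi R^+$. This is exactly the condition needed to extend $\varphi$ uniquely to a $B$-algebra homomorphism from the affine blowup algebra $B'[(J')^n/\varpi]\to R^+$, continuous for the $\varpi$-adic topology on the source. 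Since $R$ is complete and $R^+$ is open (hence closed) in $R$, this extends uniquely to a continuous $B$-algebra homomorphism $B'\langle(J')^n/\varpi\rangle\to R^+$. Inverting $\varpi$ gives a morphism of Huber pairs $(B'((J')^n), B'((J')^n)^\circ)\to (R, R^+)$, and postcomposing the resulting morphism $\Spa(R,R^+)\to \Spa(B'((J')^n))$ with the structural map $\Spa(B'((J')^n))\to\mf{X}'_\eta$ produces the desired morphism $\Spa(R,R^+)\to\mf{X}'_\eta$.

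The remaining verifications for bijectivity are formal. Independence of $n$ is automatic, since for $n'\geqslant n$ the two candidate morphisms $\Spa(R,R^+)\to\Spa(B'((J')^n))$ and $\Spa(R,R^+)\to\Spa(B'((J')^{n'}))$ are compatible via the open immersion $\Spa(B'((J')^{n'}))\hookrightarrow \Spa(B'((J')^n))$, and hence induce the same morphism into the direct limit $\mf{X}'_\eta=\varinjlim_n \Spa(B'((J')^n))$ of Lemma~\ref{lem:concrete-generic-fiber-description}. That the resulting morphism lies over $\mf{X}_\eta$, that the two constructions are mutually inverse, and that the assignment is functorial in $(R,R^+)$, all follow by unwinding the universal properties of the affine blowup and its $\varpi$-adic completion.

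For the colimit description, each open and bounded $B$-subalgebra $R_0\subseteq R^+$ is a ring of definition of the Tate ring $R$, hence is $\varpi$-adically complete with principal ideal of definition $\varpi R_0$; so $\Spf(R_0)$ is a well-defined formal scheme and $\Hom_{\mf{X}}(\Spf(R_0),\mf{X}')$ is naturally the set of continuous $B$-algebra homomorphisms $B'\to R_0$. The transition maps attached to inclusions $R_0\hookrightarrow R_0'$ are injective, and composition with $R_0\hookrightarrow R^+$ embeds each $\Hom_{\mf{X}}(\Spf(R_0),\mf{X}')$ into $\Hom_B(B',R^+)$, so the colimit identifies with a directed union. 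For the reverse inclusion, given a continuous $\varphi\colon B'\to R^+$, choose any ring of definition $A_0\subseteq R^+$ of $R$ and let $R_0$ be the closed $B$-subalgebra of $R^+$ generated by $A_0$ together with the images under $\varphi$ of a finite topological $\mathcal{O}_K$-algebra generating set of $B'$ (which exists by Proposition~\ref{prop:fml-lfft}); then $R_0$ is open (it contains $A_0$) and bounded (bounded subrings of a Tate ring are stable under finite sums, products, and $\varpi$-adic closure), and $\varphi$ factors through it. The main subtlety throughout is the tracking of continuity and completeness across the passages between the affine blowup, its $\varpi$-adic completion, and the resulting Huber pair on the generic fiber.
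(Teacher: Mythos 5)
Your argument is correct, and it is essentially the proof the paper intends: the paper states this lemma without proof, as the expected unwinding of Constructions~\ref{const:generic-fiber-affine} and~\ref{const:generic-fiber-affine-functor-of-points} together with Lemma~\ref{lem:concrete-generic-fiber-description}, and your construction of the inverse via the affine blowup algebra and its $\varpi$-adic completion is exactly that unwinding. The one step you assert without justification, and which is the only non-formal point in the whole proof, is that $B'[\tfrac{(J')^n}{\varpi}]\to R^+$ is continuous for the $\varpi$-adic topology on the source --- equivalently, that its image is a \emph{bounded} subring of $R$; without this the passage to the completion $B'\langle\tfrac{(J')^n}{\varpi}\rangle$ would fail. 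It does hold, but it genuinely uses that $B'$ is topologically formally of finite type: choosing topological generators as in Proposition~\ref{prop:fml-lfft}, the image $\varphi(B')$ is contained in the closure of a subring generated over a ring of definition of $R$ by finitely many elements of $R^+\subseteq R^\circ$, and such a subring is bounded because $R$ has a basis of subgroup neighbourhoods of $0$ (so the additive group, the products, and the closure of a bounded set are again bounded); adjoining the finitely many power-bounded elements $\varphi(c_i)/\varpi\in R^+$ preserves boundedness. You invoke precisely this kind of reasoning in the second half of your proof (to produce the open bounded $R_0$ through which $\varphi$ factors), so the ingredient is present in your write-up --- it just also needs to be cited where the completion is formed. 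The remaining points (independence of $n$, injectivity of the forward map via quasi-compactness of $\Spa(R,R^+)$ forcing a factorization through a single $\Spa(B'((J')^m))$ on which a Huber-pair map is determined by its restriction to $B'$, and the identification of $\Hom_{\mf{X}}(\Spf(R_0),\mf{X}')$ with continuous $B$-algebra maps $B'\to R_0$) are routine, as you say.
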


With this, we can prove the following.

\begin{prop}\label{prop:generic-fiber-props}
    Let $f\colon \mf{X}'\to\mf{X}$ be an \'etale (resp.\@ finite) morphism in $\cat{FSch}^\mathrm{lfft}_{\h_K}$. Then, $f_\eta\colon \mf{X}'_\eta\to\mf{X}_\eta$ is \'etale (resp.\@ finite).
\end{prop}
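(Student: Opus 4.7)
Since étaleness and finiteness are local on source and target, my first step is to reduce to the affine case: $\mf{X}=\Spf(B)$ and $\mf{X}'=\Spf(B')$ with $B\to B'$ étale (resp.\@ finite). Both properties imply $B\to B'$ is adic, so for any ideal of definition $J$ of $B$, the ideal $J':=JB'$ is an ideal of definition of $B'$. By Lemma \ref{lem:concrete-generic-fiber-description} we then have presentations
\[
    \mf{X}_\eta \;=\; \varinjlim_n \Spa(B(J^n)), \qquad \mf{X}'_\eta \;=\; \varinjlim_n \Spa(B'((J')^n)),
\]
and using the functorial description of Lemma \ref{lem:generic-fiber-functor-of-points-2} one checks that $f_\eta$ respects these presentations, carrying $\Spa(B'((J')^n))$ into $\Spa(B(J^n))$ for every $n$. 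Since étaleness and finiteness may be checked on an admissible cover of the target, I am reduced to proving that each map $\Spa(B'((J')^n))\to \Spa(B(J^n))$ of affinoid rigid $K$-spaces is étale (resp.\@ finite).

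The plan for this is to deduce it from the corresponding statement about integral models: namely, that the map of admissible formal $\cO_K$-schemes
\[
    \Spf(B'\langle (J')^n/\varpi\rangle) \,\longrightarrow\, \Spf(B\langle J^n/\varpi\rangle)
\]
is étale (resp.\@ finite). Once this is known, the passage from admissible formal models to rigid generic fibers is a standard consequence of the Raynaud equivalence \eqref{eq:Raynaud-equiv}: if $A\to A'$ is an étale (resp.\@ finite) morphism of topologically finite type $\cO_K$-algebras, then $\Spa(A'[1/\varpi])\to \Spa(A[1/\varpi])$ is étale (resp.\@ finite). To establish the claim about the integral models, I would prove the key identification
\[
    B\langle J^n/\varpi\rangle \,\widehat{\otimes}_B\, B' \;\simeq\; B'\langle (J')^n/\varpi\rangle,
\]
which exhibits the displayed morphism as the base change of the étale (resp.\@ finite) morphism $\Spf(B')\to \Spf(B)$ along the adic morphism $\Spf(B\langle J^n/\varpi\rangle)\to \Spf(B)$; étaleness and finiteness are both preserved under such base change.

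The main obstacle will be the displayed tensor-product isomorphism. On the level of un-completed blowup algebras, the identification $B[J^n/\varpi]\otimes_B B' = B'[(J')^n/\varpi]$ is immediate from $J'=JB'$, and the content is to verify compatibility with $J$-adic (equivalently, $\varpi$-adic) completion. In the finite case this is essentially automatic, since any finitely generated module over the $\varpi$-adically complete ring $B\langle J^n/\varpi\rangle$ is itself $\varpi$-adically complete, so the ordinary and completed tensor products coincide. In the étale case one uses flatness of $B\to B'$ together with the topologically finite presentation of $B\langle J^n/\varpi\rangle$ to conclude that the completion is also compatible with base change.
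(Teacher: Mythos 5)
Your proposal is correct, and on the finite half it is essentially the paper's own argument: both reduce to the affine adic situation, identify $B'\langle (J')^n/\varpi\rangle$ with $B\langle J^n/\varpi\rangle\otimes_B B'$, and conclude level by level along the exhaustion $\mf{X}'_\eta=\varinjlim_n\Spa(B'((J')^n))$. On the étale half your route is genuinely different. The paper never passes through integral models at the finite levels; it verifies the infinitesimal lifting criterion directly on the generic fibers, using Lemma~\ref{lem:generic-fiber-functor-of-points-2} to identify $\Hom_{\mf{X}_\eta}(\Spa(R,R^+),\mf{X}'_\eta)$ with $\varinjlim_{R_0}\Hom_{\mf{X}}(\Spf(R_0),\mf{X}')$ and to transport the unique lifting property of $\mf{X}'\to\mf{X}$ along square-zero extensions to $f_\eta$; together with $f_\eta$ being locally of finite type this gives étaleness. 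You instead build an étale admissible formal model $\Spf(B'\langle (J')^n/\varpi\rangle)\to\Spf(B\langle J^n/\varpi\rangle)$ at each level (your completed tensor product identification is fine: flatness of $B\to B'$ gives $B[J^n/\varpi]\otimes_B B'=B'[(J')^n/\varpi]$ inside $B'[1/\varpi]$, and completion only sees the quotients modulo powers of $\varpi$, so commutes with this base change) and then invoke the topologically-finite-type case of the statement. What your approach buys is that it reduces everything to the classical affinoid setting; what it costs is that the final input, ``étale on admissible formal models implies étale on affinoid generic fibers,'' is not literally a formal consequence of the Raynaud equivalence \eqref{eq:Raynaud-equiv} and is a special case of the very proposition being proved, so you must cite an actual reference for it (it is standard, e.g.\ via the Jacobian criterion or the same lifting argument the paper uses) to avoid circularity. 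Two minor points, both also elided by the paper: in the finite (non-flat) case the identification $B[J^n/\varpi]\otimes_B B'=B'[(J')^n/\varpi]$ is only a surjection with $\varpi$-power-torsion kernel, which is harmless after inverting $\varpi$; and gluing finiteness over the cover $\{\Spa(B(J^n))\}_n$ uses that $\Spa(B'((J')^n))$ is the full preimage of $\Spa(B(J^n))$, which holds since $(J')^n=J^nB'$ and points of $\mf{X}'_\eta$ take values $\leqslant 1$ on $B'$.
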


\begin{proof}
We may assume that $\mf{X}=\Spa(B)$ and $\mf{X}'=\Spa(B')$. Suppose first that $\mf{X}'\to\mf{X}$ is \'etale. We deduce from Lemma \ref{lem:generic-fiber-functor-of-points-2} that for any Huber $(K,\h_K)$-algebra $(R,R^+)$ and square-zero ideal $N$ of $R$ one has a commutative square
\begin{equation*}
    \xymatrix{\Hom_{\mf{X}_\eta}(\Spa(R,R^+),\mf{X}'_\eta)\ar[r]\ar[d] & \varinjlim_{R_0}\Hom_{\mf{X}}(\Spf(R_0),\mf{X}')\ar[d]\\ \Hom_{\mf{X}_\eta}(\Spa(R/N,R^+/(R^+\cap N)),\mf{X}'_\eta)\ar[r] & \varinjlim_{R_0}\Hom_{\mf{X}}(\Spf(R_0/N),\mf{X}')}
\end{equation*}
where the horizontal maps are bijections. But, the right vertical arrow is a bijection since $\mf{X}'\to\mf{X}$ is \'etale, and thus the left vertical arrow must also be a bijection. Since $f_\eta\colon \mf{X}'_\eta\to\mf{X}_\eta$ is locally of finite type (since both are locally of finite type over $\Spa(K)$) we deduce that $f_\eta$ is \'etale as desired.

Now suppose that $f$ is finite, and let $J$ be an ideal of definition of $B$. Since $f$ is adic, the ideal $J'=J B'$ is an ideal of definition of $B'$. As $B \to B'$ is finite, $B'\langle\frac{(J')^n}{\varpi}\rangle = B\langle\frac{J^n}{\varpi}\rangle \otimes_B B'$ and it follows that $B'((J')^n) = B(J^n)\otimes_B B'$. Since $\mf{X}'_\eta\to\mf{X}_\eta$ is the colimit of the maps $\Spa(B'((J')^n)) \to \Spa(B(J^n))$, which are finite by \cite[Lemma 1.4.5 iv)]{Huberbook}, it follows that this map is finite, as desired.
\end{proof}

We end this subsection by showing that the generic fiber preserves the partial properness of the special fiber.\footnote{While we do not need the converse, we still expect this implication to hold.} Recall here that a morphism of schemes is called \emph{partially proper} if it satisfies the valuative criterion for properness (see \stacks{03IX}). A morphism of rigid $K$-spaces is called \emph{partially proper} if it is separated and universally specializing (see \cite[1.3.3]{Huberbook}). 

\begin{lem}\label{lem:pp-generic-special}
    Let $\mf{Y}\to\mf{X}$ be a morphism of admissible formal schemes over $\h_K$, and assume that $\mf{X}$ is quasi-paracompact. If $\mf{Y}_\eta\to \mf{X}_\eta$ is partially proper then so is $\mf{Y}_k\to\mf{X}_k$.
\end{lem}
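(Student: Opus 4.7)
The strategy is to verify the scheme-theoretic valuative criterion for partial properness of $\mf{Y}_k \to \mf{X}_k$ by lifting the test data to the rigid generic fibers via the specialization maps, invoking the assumed partial properness of $\mf{Y}_\eta \to \mf{X}_\eta$ there, and specializing the resulting unique morphism back to the special fiber. The key idea is that the functoriality $\op{sp}_\mf{X}\circ f_\eta = f \circ \op{sp}_\mf{Y}$ allows us to translate lifting problems downstairs into lifting problems upstairs, where our hypothesis applies.

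Using quasi-paracompactness of $\mf{X}$, I would reduce to the case $\mf{X} = \Spf(A)$ and $\mf{Y} = \Spf(B)$ with $A, B$ admissible $\h_K$-algebras; the image of the closed point of $\Spec V$ lies in a single affine open of $\mf{X}$, and the image of $\Spec F$ in $\mf{Y}_k$ is a single scheme point. The central construction is then the following lifting: given a ring homomorphism $A \to V$ (factoring through $A/\varpi$ with kernel $\mf{p}$) corresponding to the test map $\Spec V \to \mf{X}_k$, I would produce a Huber pair $(K', K'^+)$ over $(K, \h_K)$ together with a morphism $\Spa(K', K'^+) \to \mf{X}_\eta$ whose composition with $\op{sp}_\mf{X}$ recovers $\Spec V \to \mf{X}_k$. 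Concretely, combining the given valuation $\bar\nu$ on $V$ with the $\varpi$-adic valuation on $A$ lexicographically (with the $\varpi$-adic factor in the infinitesimal position) produces, after localization at $\mf{p}$ and passage to the appropriate fraction field, a composite valuation $\tilde\nu$ on $A_K$ with values in $\Gamma_V\oplus \mb{Z}$; the associated valuation ring $K'^+$ is an open valuation subring of $K' = \mathrm{Frac}(A_\mf{p}/\ker(\tilde\nu))[\frac{1}{\varpi}]$, and Proposition~\ref{lem:generic-fiber-functor-of-points-1} converts the continuous $\h_K$-algebra map $A \to K'^+$ into the desired morphism $\Spa(K', K'^+) \to \mf{X}_\eta$.

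Next, the morphism $\Spec F \to \mf{Y}_k$ lifts analogously to $\Spa(K') \to \mf{Y}_\eta$, yielding a lifting problem for $\mf{Y}_\eta \to \mf{X}_\eta$. By partial properness, there is a unique extension $\Spa(K', K'^+) \to \mf{Y}_\eta$ of $\Spa(K') \to \mf{Y}_\eta$ compatible with $\Spa(K', K'^+) \to \mf{X}_\eta$; composing with $\op{sp}_\mf{Y}$ produces the required morphism $\Spec V \to \mf{Y}_k$. For the uniqueness (and hence separatedness of $\mf{Y}_k \to \mf{X}_k$), I would argue that any two candidate lifts $g_1, g_2 \colon \Spec V \to \mf{Y}_k$ yield the same lift $\Spa(K') \to \mf{Y}_\eta$ from their common restriction to $F$, whose unique extension to $\Spa(K', K'^+)$ specializes back to a single morphism, forcing $g_1 = g_2$.

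The main obstacle I anticipate is the lifting construction of $(K', K'^+)$. One must check carefully that $\tilde\nu$ is well-defined and a genuine valuation even when $\mf{p} = \ker(A \to V)$ strictly contains $(\varpi)$ (which requires localizing at $\mf{p}$ to make sense of the infinitesimal composition), that $K'^+$ is indeed an open valuation subring of $K'$ with respect to the topology inherited from the $\varpi$-adic topology on $A$, and that the resulting $\Spa(K', K'^+) \to \mf{X}_\eta$ specializes to the original $\Spec V \to \mf{X}_k$ not merely as a map of topological spaces but compatibly with the structure sheaves. Higher-rank valuations on $V$ will require some additional bookkeeping in the lexicographic composition, but the construction should proceed in the same spirit.
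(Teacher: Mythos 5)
Your overall strategy --- verifying the scheme-theoretic valuative criterion for $\mf{Y}_k\to\mf{X}_k$ directly by lifting the test data to the generic fibers --- is genuinely different from the paper's proof. The paper instead reduces, via tautness of $\mf{X}_\eta$ (this is where quasi-paracompactness enters), hence of $\mf{Y}_\eta$ and $\mf{Y}_k$, and an auxiliary lemma characterizing partial properness of a separated taut $k$-scheme by the condition that $f\times\mathrm{id}_{\mb{A}^n_k}$ is specializing for all $n$, to the purely topological statement that $f_k$ is specializing; that statement then follows formally from surjectivity and closedness of the specialization maps together with the hypothesis that $f_\eta$ is specializing. Your route is in the spirit of Fujiwara--Kato's treatment of the valuative criterion and could in principle be carried out, but the central construction as you describe it contains a genuine error.

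The problem is the placement of the $\varpi$-adic factor. A point of $\mf{X}_\eta=\Spa(A_K)$ is a \emph{continuous} valuation on the Tate ring $A_K$, so $|\varpi|$ must be cofinal in the value group. If the $\varpi$-adic factor sits in the infinitesimal position of $\Gamma_V\oplus\mb{Z}$ (i.e.\ $\{0\}\oplus\mb{Z}$ is the convex subgroup), then $|\varpi|^n$ never drops below any element with nontrivial $\Gamma_V$-component, so $\tilde\nu$ is not continuous and does not define a point of $\mf{X}_\eta$ at all; correspondingly $\varpi$ is not topologically nilpotent in $K'^+$, so $(K',K'^+)$ is not a Huber pair over $(K,\h_K)$. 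The factor that ``sees'' $\varpi$ must be the \emph{dominant} one and $\bar\nu$ the infinitesimal one; only then does $\{a\in A: \tilde\nu(a)<1\}$ recover the pullback of $\mf{m}_V$. Moreover, even with the order fixed, ``the $\varpi$-adic valuation on $A$ localized at $\mf{p}$'' is not a valuation when $\mf{p}$ strictly contains $\sqrt{(\varpi)}$ (elements of $\mf{p}\setminus\sqrt{(\varpi)}$ receive no sensible value); what is actually needed is a valuation ring of $\mathrm{Frac}(A/\mf{q})$, for $\mf{q}\subseteq\mf{p}$ a minimal prime (necessarily avoiding $\varpi$ by $\h_K$-flatness of $A$), dominating $(A/\mf{q})_{\mf{p}}$, and its value group is in general far from $\mb{Z}$. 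Finally, your uniqueness argument is circular: given two lifts $g_1,g_2\colon\Spec V\to\mf{Y}_k$, nothing shows that \emph{both} arise as specializations of the unique extension upstairs --- you would have to lift each $g_i$ itself to the generic fiber, which is another instance of the delicate construction rather than a consequence of it. The fact that quasi-paracompactness plays no role in your argument, while it is essential in the paper's, is a further sign that the subtlety has been displaced rather than resolved.
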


Before proving the lemma, we recall some facts about taut spaces. A locally spectral topological space $X$ is \emph{taut} if it is quasi-separated and the closure of every quasi-compact open is quasi-compact \cite[Definition~5.1.2]{Huberbook}. It is easy to check that a locally topologically Noetherian space is taut if and only if its irreducible components are quasi-compact. Moreover, if $X$ is quasi-paracompact and quasi-separated then $X$ is taut by \cite[Lemma~5.1.3(ii)]{Huberbook}. If $Y\to X$ is a partially proper morphism of rigid $K$-spaces and $X$ is taut, then so is $Y$, by \cite[Lemma 5.1.4]{Huberbook}. Finally, for an admissible formal scheme $\mf{Y}$ over $\cO_K$, since specialization map $\op{sp}_{\mf{Y}}$ is closed, quasi-compact, and surjective, we deduce that $\mf{Y}_\eta$ is taut if and only if $\mf{Y}_k$ is taut. 

\begin{lem}\label{lem:universally-spec}
    Let $f\colon Y\to X$ be a separated morphism of schemes locally of finite type over $k$ where $Y$ is taut. Then $f$ is partially proper if and only if the maps $f_n = f\times {\rm id}_{\mb{A}^n_k} \colon Y\times \mb{A}^n_k\to  X\times \mb{A}^n_k$ are specializing for all $n\geqslant 0$.
\end{lem}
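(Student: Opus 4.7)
The forward direction is immediate: partial properness is stable under base change, so each $f_n$ is partially proper, and in particular specializing.

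For the converse I argue by contrapositive: I show that if the valuative criterion of properness fails for $f$, then some $f_m$ fails to be specializing. Since $f$ is locally of finite type between locally Noetherian $k$-schemes, the valuative criterion may be tested using DVRs alone. Suppose it fails for a DVR $R$ with fraction field $L$ and a square giving $\eta\colon\Spec L\to Y$ and $g\colon\Spec R\to X$ admitting no lift. Write $y=\eta(\xi_R)$, $x_0=g(s_R)$, $x_1=f(y)$; after restricting to an affine open of $X$ containing $x_0$ (hence also $x_1$), assume $X=\Spec A'$. The $n=0$ case of the hypothesis produces $y_0'\in Y$ with $y\rightsquigarrow y_0'$ and $f(y_0')=x_0$; I pick an affine open $\Spec B\subseteq Y$ containing $y_0'$, which then also contains $y$ since $\overline{\{y\}}\cap \Spec B\ni y_0'$. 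Writing $B=k[s_1,\dotsc,s_m]/J$ and $c_i=\eta^\#(s_i)\in L$, a ring map $B\to R$ extending the given data exists precisely when every $c_i\in R$ (in which case compatibility with $g$ is automatic), so the failure of the lift forces some $c_i\notin R$. Choose $j$ with $v(c_j)=\min_i v(c_i)<0$, so that $r_0:=1/c_j$ and $r_i:=c_i/c_j$ (for $i\neq j$) all lie in $R$.

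Now apply the specializing hypothesis to $f_m$, equipping $\mathbb{A}^m$ with coordinates $u$ and $v_i$ ($i\neq j$). The tuple $(r_0,r_i)\in R^m$ defines a map $\Spec R\to\mathbb{A}^m$ which, combined with $g$, produces a morphism $\Spec R\to X\times\mathbb{A}^m$ inducing a specialization $\tilde q_\xi\rightsquigarrow\tilde q_s$ whose $\mathbb{A}^m$-coordinates at $\tilde q_s$ are $(0, c_i/c_j\bmod\mathfrak{m}_R)$, the zero reflecting $v(r_0)>0$. On the source, $\tilde y\in Y\times\mathbb{A}^m$ is the image of $(\eta,(r_0,r_i))\colon\Spec L\to Y\times\mathbb{A}^m$ and lies on the closed subscheme $\Gamma\subseteq \Spec B\times\mathbb{A}^m$ cut out by $us_j-1=0$ and $v_is_j-s_i=0$ ($i\neq j$); this $\Gamma$ is the graph of the embedding $D(s_j)\hookrightarrow\mathbb{A}^m$ via the new generators, and the equation $us_j=1$ forces $\Gamma$ to lie entirely over $D(s_j)\subseteq\Spec B$. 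The specializing hypothesis provides $\tilde y\rightsquigarrow\tilde y''$ in $Y\times\mathbb{A}^m$ with $f_m(\tilde y'')=\tilde q_s$; after using the tautness of $Y$ to cover $\overline{\{y\}}$ by finitely many affine opens and localizing to the relevant chart, we may arrange $\tilde y''\in\Gamma$, so $\tilde y''=(y'',(1/s_j(y''), s_i/s_j(y'')))$ for some $y''\in D(s_j)$. But matching $\mathbb{A}^m$-coordinates with $\tilde q_s$ forces $1/s_j(y'')=0$ in $k(y'')$, which is impossible since $s_j(y'')\neq 0$ on $D(s_j)$.

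This contradiction shows $f_m$ cannot be specializing whenever the valuative criterion fails, completing the contrapositive. The key point, and the main technical obstacle, is the closedness of the graph $\Gamma$: the equations defining it force $s_j$ to be invertible, so that $\Gamma$ lies over $D(s_j)$, and this closedness is what promotes the topological specialization provided by $f_m$ into valuation-theoretic data incompatible with $c_j\notin R$. The tautness of $Y$ is used precisely to control which affine chart the specialization point $\tilde y''$ lives in, reducing the global analysis to the single $\Spec B$ treated above.
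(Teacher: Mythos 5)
Your forward direction is fine, and your strategy for the converse --- arguing by contrapositive through the valuative criterion and the graph trick $us_j=1$ --- is a genuinely different route from the paper's, which first reduces, via tautness, to the quasi-compact irreducible components $Z$ of $Y$ and then simply cites \stacks{0205} for the finite type morphisms $Z\to X$. Your route amounts to reproving the hard direction of that citation by hand, which is legitimate in principle; the preliminary point that the existence part of the valuative criterion for a separated, locally of finite type morphism of locally Noetherian schemes may be tested on DVRs is true but needs a reference (or can itself be routed through the quasi-compactness of $\overline{\{y\}}$), and is not the real problem.

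The real problem is the step ``after using the tautness of $Y$ \dots we may arrange $\tilde y''\in\Gamma$.'' The specializing hypothesis hands you \emph{some} specialization $\tilde y''$ of $\tilde y$ lying over $\tilde q_s$; you have no control over where it lands. Your $\Gamma$ is closed only in $\Spec B\times\mb{A}^m$, not in $Y\times\mb{A}^m$, so the inclusion $\overline{\{\tilde y\}}\cap(\Spec B\times\mb{A}^m)\subseteq\Gamma$ helps only if $\tilde y''$ happens to lie over $\Spec B$, and nothing forces that: its projection $y''$ lies in $\overline{\{y\}}\cap f^{-1}(x_0)$, which may well contain points outside $\Spec B$ even though $y_0'\in\Spec B$. ``Localizing to the relevant chart'' does not repair this, because the entire construction --- the generators $s_i$, the index $j$, the map $\Spec R\to\mb{A}^m$, hence the target point $\tilde q_s$ and the specialization it produces --- is tied to the chart $\Spec B$; in a different chart the point $\tilde y''$ satisfies no graph equation, and redoing the construction there produces a \emph{different} specialization, threatening an infinite regress among charts. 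The correct use of tautness is different: $\overline{\{y\}}$ is quasi-compact (it sits inside a quasi-compact irreducible component), so finitely many affine charts $\Spec B_1,\dots,\Spec B_N$ cover it, each containing $y$ and each having some generator whose image in $L$ lies outside $R$ (else a lift exists into that chart). Run your construction \emph{simultaneously} for all $N$ charts inside a single $\mb{A}^M$ with $M=\sum_l m_l$: one gets one point $\tilde q_s$ whose coordinates $u^{(l)}$ all vanish, and closed graphs $\Gamma_l\subseteq\Spec B_l\times\mb{A}^M$ each containing $\tilde y$; now whichever chart $y''$ lands in, the corresponding equation $u^{(l)}s^{(l)}_{j_l}=1$ contradicts $u^{(l)}(\tilde y'')=0$. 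With that modification your argument closes; as written, it has a genuine gap.
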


\begin{proof}
The only if direction is clear, since a partially proper map is specializing and being partially proper is stable under base change. To prove the converse, we first claim that $f$ is partially proper if and only if for all irreducible components $Z$ of $Y$ the map $Z\to X$ is proper. Since $Z$ is quasi-compact, evidently if $Y\to X$ is partially proper then $Z\to X$ is partially proper and thus proper (see \stacks{0BX5}). The converse is clear by thinking about the valuative criterion of properness for $Y\to X$ since for any valuation ring $V$ with fraction field $K$, a map $\Spec(K)\to Y$ must factorize through a unique irreducible component of $Y$.

From the above we see that it suffices to show that $Z\to X$ is proper for all irreducible components $Z$ of $Y$. But, since $Z\to X$ is of finite type, by \stacks{0205} it suffices to show that $Z\times\mb{A}^n_k \to X\times \mb{A}^n_k$ is specializing for all $n\geqslant 0$. Since $Z$ is a closed subscheme of $Y$, a moment's thought reveals that this is implied by the fact that the maps $f_n$ are specializing for all $n\geqslant 0$.
\end{proof}

\begin{proof}[Proof of Lemma~\ref{lem:pp-generic-special}]
Since $f_\eta$ is separated, so is $f_k$, by \cite[Proposition~4.7]{BoschLutkebohmertI}. Since $\mf{X}_\eta$ is quasi-paracompact and quasi-separated, it is taut, and hence so is $\mf{Y}_\eta$ and therefore also $\mf{Y}_k$.  By Lemma~\ref{lem:universally-spec}, it suffices to show that $\mf{Y}_k\times\mb{A}^n_k\to \mf{X}_k\times\mb{A}^n_k$ is specializing for all $n\geqslant 0$. Replacing $f\colon \mf{Y}\to\mf{X}$ with $\mf{Y}\times \widehat{\mb{A}}^n_{\cO_K}\to\mf{X}\times \widehat{\mb{A}}^n_{\cO_K}$, we are reduced to showing that $f_k$ itself is specializing.  We have a commutative square of topological spaces
\[ 
    \xymatrix{
        \mf{Y}_\eta \ar[r]^{\op{sp}_{\mf{Y}}} \ar[d]_{f_\eta} & \mf{Y}_k \ar[d]^{f_k} \\
        \mf{X}_\eta \ar[r]_{\op{sp}_{\mf{X}}} & \mf{X}_k  \\
    }
\]
Here, the horizontal arrows are specializing (because they are closed) and surjective. If the left arrow is specializing, then so is the diagonal composition, and then it is straightforward to check that the right arrow must be specializing as well. 
\end{proof}

\subsection{Tube open subsets}
\label{ss:tube}

Let $\mf{X}$ be a formal scheme of finite type over $\cO_K$ and let $Z\subseteq |\mf{X}|$ be a closed subset. We then define the \emph{tube open subset} of $\mf{X}_\eta$ associated to $Z$, denoted $T(\mf{X}|Z)$, to be the open subset $\op{sp}_\mf{X}^{-1}(Z)^\circ$, i.e.\ the topological interior of $\op{sp}_\mf{X}^{-1}(Z)$. With the notation $]Z[$, such opens were first considered by Berthelot in the context of rigid cohomology \cite{Berthelot}. The following topological properties of tube open subsets of $\mf{X}_\eta$ are important for the proof of Theorem \ref{thm:specialization}.

\begin{prop} \label{prop:tube-properties} 
    Let notation be as above.
    \begin{enumerate}
        \item The tube open subset $T(\mf{X}|Z)$ is an overconvergent open subset.
        \item The tube open subset $T(\mf{X}|Z)$ contains every maximal point of $\op{sp}_\mf{X}^{-1}(Z)$.
        \item For any cover $\{Z_i\}$ of $|\mf{X}|$ by closed subsets, the set of corresponding tube open subsets $\{T(\mf{X}|Z_i)\}$ forms an overconvergent open cover of $\mf{X}_\eta$.
    \end{enumerate}
\end{prop}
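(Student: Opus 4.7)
The plan is to work affine-locally on $\mf{X}$: write $\mf{X} = \Spf(A)$ and $Z = V(J) \cap |\mf{X}|$ for an ideal $J = (f_1,\ldots,f_n) \subseteq A$ containing an ideal of definition, with $\varpi$ among the generators. Using the valuation-theoretic description of the specialization map recalled in Section~\ref{ss:generic-fiber}, a point $x \in \mf{X}_\eta$ lies in $\op{sp}_\mf{X}^{-1}(Z)$ if and only if $|f_i(x)| < 1$ for all $i$. As the continuous preimage of a closed set, $\op{sp}_\mf{X}^{-1}(Z)$ is automatically closed, but it need not be open: rank-$2$ points at which some $|f_i|$ is only infinitesimally less than $1$ sit on its boundary, not in its interior.

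The heart of the argument will be the explicit formula
\[
    T(\mf{X}|Z) \;=\; \bigcup_{m\geqslant 1} U_m, \qquad U_m := \bigcap_{i=1}^n \bigl\{x \in \mf{X}_\eta \,:\, |f_i(x)|^m \leqslant |\varpi(x)|\bigr\}.
\]
The inclusion $\supseteq$ is immediate, since each $U_m$ is a rational (hence open) subset contained in $\op{sp}_\mf{X}^{-1}(Z)$: the condition $|f_i|^m \leqslant |\varpi| < 1$ forces $|f_i| < 1$. For the reverse inclusion, given $x \in T(\mf{X}|Z)$ with open neighborhood $V \subseteq \op{sp}_\mf{X}^{-1}(Z)$, I pick a rank-$1$ generization $x_0$ of $x$; since opens in an adic space are stable under generization, $x_0 \in V$, so $|f_i(x_0)| < 1$ for all $i$, and rank-$1$-ness of $\nu_{x_0}$ yields $|f_i(x_0)|^m \leqslant |\varpi(x_0)|$ for $m$ sufficiently large, i.e.\ $x_0 \in U_m$. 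Overconvergence of $U_m$ (verified in the next paragraph) then propagates membership from $x_0$ to its specialization $x$.

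That each $U_m$ is overconvergent is checked directly: for $y$ a specialization of $x \in U_m$ within $\mf{X}_\eta$, the valuation $\nu_y$ is obtained from $\nu_x$ either by quotienting by a convex subgroup of $\Gamma_x$ not containing $|\varpi(x)|$ (vertical specialization) or by enlarging the support along a prime not containing $\varpi$ (horizontal specialization); both procedures are order-preserving, so the inequality $|f_i(x)|^m \leqslant |\varpi(x)|$ passes to $|f_i(y)|^m \leqslant |\varpi(y)|$. Granting the formula, (1) is immediate as $T(\mf{X}|Z)$ is a union of overconvergent opens, and (2) follows from the same rank-$1$ argument applied directly to a given maximal point of $\mf{X}_\eta$ lying in $\op{sp}_\mf{X}^{-1}(Z)$. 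For (3), surjectivity of $\op{sp}_\mf{X}$ ensures that every maximal point $y_0$ of $\mf{X}_\eta$ has $\op{sp}_\mf{X}(y_0) \in Z_i$ for some $i$, so $y_0 \in T(\mf{X}|Z_i)$ by (2); an arbitrary point $y \in \mf{X}_\eta$ is then a specialization of such a $y_0$, whence $y \in T(\mf{X}|Z_i)$ by the overconvergence (1).

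The main obstacle I anticipate is not conceptual but technical: one must verify the overconvergence of $U_m$ under both types of specialization in the framework of Huber pairs, and invoke the standard fact that every point of $\mf{X}_\eta$ admits a rank-$1$ generization. Both are essentially standard properties of valuations, applied within the affinoid pieces of the form $\Spa(B(J^n))$ from Construction~\ref{const:generic-fiber-affine}, but they should be stated carefully.
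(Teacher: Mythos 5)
Your overall strategy (reduce to the affine case, identify $T(\mf{X}|Z)$ with $\bigcup_m U_m$, and deduce (1)--(3) from properties of rank-one generizations) is sound, and it is genuinely different from the paper's proof, which simply cites results of Fujiwara--Kato. However, there is a genuine gap: the claim that each $U_m = \bigcap_i\{|f_i|^m \leqslant |\varpi|\}$ is overconvergent is false, and the justification you give for it reverses the direction of vertical specialization. A vertical \emph{generization} of a point $x$ is obtained by composing $\nu_x$ with the quotient by a convex subgroup; a \emph{specialization} $y$ of $x$ therefore has $\Gamma_x \simeq \Gamma_y/H$, i.e.\ passing from $x$ to $y$ \emph{refines} the value group rather than coarsening it, and a non-strict inequality $\overline{a}\leqslant \overline{b}$ in the quotient $\Gamma_y/H$ does not imply $a\leqslant b$ in $\Gamma_y$ when $\overline{a}=\overline{b}$. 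Concretely, take $\mf{X}=\Spf(\cO_K\langle T\rangle)$, $Z=V(T,\varpi)$, and let $\eta_r$ be the Gauss point of radius $r=|\varpi|^{1/m}$. Then $|T(\eta_r)|^m=|\varpi|$, so $\eta_r\in U_m$, but its rank-two ``outward'' specialization $y$ with $|T(y)|=r\gamma$ (where $\gamma$ is infinitesimally greater than $1$) satisfies $|T(y)|^m=|\varpi|\gamma^m>|\varpi|$, so $y\notin U_m$. (Note that $y$ still lies in the closed unit disc and in the tube, since $r\gamma<1$; it merely lands in $U_{m'}$ for $m'>m$.) This broken step is used twice in your argument: to propagate $x_0\in U_m$ down to $x$ in the reverse inclusion, and to conclude part (1).

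The gap is repairable without changing your framework: replace non-strict inequalities at the maximal generization by strict ones. If $x_0$ is the maximal generization of $x$ and $|f_i(x_0)|<1$ for all $i$, then since $\Gamma_{x_0}\subseteq\mb{R}_{>0}$ you may choose $m$ with $|f_i(x_0)|^m<|\varpi(x_0)|$ \emph{strictly}; a strict inequality in the quotient $\Gamma_x/H$ does lift to a strict inequality in $\Gamma_x$, so $x\in U_m$. This shows that membership in $\bigcup_m U_m$ depends only on the maximal generization (it is equivalent to $|f_i(x_0)|<1$ for all $i$), whence the union---though not its individual terms---is stable under specialization and generization, and the identification with $T(\mf{X}|Z)$ together with parts (1)--(3) goes through as you intend. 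Also, horizontal specializations play no role here: analytic points admit no nontrivial ones, so only the vertical case needs to be treated.
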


\begin{proof}
The proof of the first statement follows by combining \cite[Chapter 0, Proposition 2.3.15]{FujiwaraKato} and \cite[Chapter II, Proposition 4.2.5]{FujiwaraKato}. The second statement follows by combining \cite[Chapter II, Proposition 4.2.9]{FujiwaraKato} and \cite[Chapter II, Proposition 4.2.10 (2)]{FujiwaraKato}. The final statement is clear since $\bigcup_i T(\mf{X}|Z_i)$ is an overconverent open subset of $\mf{X}_\eta$ by the first statement, but also clearly contains every maximal point by the second statement, and thus must be all of $\mf{X}_\eta$ as desired.
\end{proof}

We now relate the tube open subset $T(\mf{X}|Z)$ with the generic fiber $(\widehat{\mf{X}}_{Z})_\eta$ of the formal completion of $\mf{X}$ along $Z$.

\begin{prop} \label{prop:tube-completion-comparison}
    Let $\mf{X}$ be a formal scheme of finite type over $\cO_K$ and let $Z\subseteq |\mf{X}|$ be a closed subset. The map of rigid $K$-spaces  $(\widehat{\mf{X}}_Z)_\eta\to \mf{X}_\eta$ induced by $\widehat{\mf{X}}_Z\to \mf{X}$ is an open immersion with image $T(\mf{X}|Z)$.
\end{prop}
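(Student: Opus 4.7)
The statement is local on $\mf{X}$, so the plan is to reduce to the affine case $\mf{X} = \Spf(A)$ with $A$ topologically of finite type over $\cO_K$ and $Z = V(J)$ for a finitely generated admissible ideal $J \subseteq A$ (possible by the discussion following Proposition~\ref{prop:completion-functor-description-2}); after enlarging $J$ to include $\varpi A$ we may assume $\varpi \in J$. Then $\widehat{\mf{X}}_Z = \Spf(\widehat A)$ with $\widehat A$ the $J$-adic completion of $A$, by Proposition~\ref{prop:completion-functor-description-2}.

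I would then use Construction~\ref{const:generic-fiber-affine} to realize $(\widehat{\mf{X}}_Z)_\eta$ as a directed union of rational open subdomains of $\mf{X}_\eta$. Taking $J\widehat A$ as an ideal of definition of $\widehat A$, Lemma~\ref{lem:concrete-generic-fiber-description} gives $(\widehat{\mf{X}}_Z)_\eta = \varinjlim_n \Spa(\widehat A(J^n \widehat A))$. The crucial computation is that in the affine blowup algebra $A[J^n/\varpi]$, the blowup relation $J^n \cdot A[J^n/\varpi] = \varpi \cdot A[J^n/\varpi]$ together with $\varpi \in J$ forces the $J$-adic and $\varpi$-adic topologies to coincide. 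Since $\widehat A$ is the $J$-adic completion of $A$, the natural map
\[
    A\langle J^n/\varpi\rangle \longrightarrow \widehat A\bigl\langle J^n \widehat A/\varpi\bigr\rangle
\]
is then an isomorphism, and inverting $\varpi$ identifies $\Spa(\widehat A(J^n \widehat A))$ with the rational open
\[
    U_n := \bigl\{x \in \mf{X}_\eta : |g(x)| \leq |\varpi(x)| \text{ for all } g \in J^n \bigr\}
\]
of $\mf{X}_\eta = \Spa(A[1/\varpi])$. Consequently, $(\widehat{\mf{X}}_Z)_\eta \to \mf{X}_\eta$ is the open immersion with image $\bigcup_n U_n$.

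The final step is to identify $\bigcup_n U_n$ with $T(\mf{X}|Z)$. Writing $J = (f_1,\dots,f_r)$, on $U_n$ we have $|f_i(x)|^n \leq |\varpi(x)| < 1$ and hence $|f_i(x)| < 1$ for all $i$, so $\op{sp}_\mf{X}(x) \in V(J) = Z$. Since $U_n$ is open, $\bigcup_n U_n \subseteq T(\mf{X}|Z)$. Conversely, both $\bigcup_n U_n$ and $T(\mf{X}|Z)$ are overconvergent opens of $\mf{X}_\eta$: the latter by Proposition~\ref{prop:tube-properties}(1), and the former because the non-strict inequality $|f_i|^n \leq |\varpi|$ (with $\varpi$ a unit on $\mf{X}_\eta$) is preserved under specialization of valuations. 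Since an overconvergent open subset of an adic space is determined by the set of rank-1 points it contains (opens are stable under generalization, overconvergent opens under specialization, and every point has a unique rank-1 generalization), it suffices to check that both sides contain the same rank-1 points. A rank-1 point $x$ lies in $T(\mf{X}|Z)$ iff $|f_i(x)| < 1$ for all $i$; for any such $x$ we have $|\varpi(x)| > 0$ and $|f_i(x)|^n \to 0$, so $x \in U_n$ for $n \gg 0$. The main technical hurdle in this outline is the identification of $\Spa(\widehat A(J^n \widehat A))$ with the rational open $U_n$, which rests on the topology comparison on the blowup algebra $A[J^n/\varpi]$; the remainder of the argument is the standard overconvergence-plus-rank-1-points dichotomy.
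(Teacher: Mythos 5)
Your strategy is sound, and for the identification of the image with the tube it is genuinely different from the paper's: the paper compares which quasi-compact opens $U\to\mf{X}_\eta$ factor through $(\widehat{\mf{X}}_Z)_\eta$ and through $T(\mf{X}|Z)$ via the universal properties (Propositions~\ref{prop:completion-functor-description-2} and~\ref{lem:generic-fiber-functor-of-points-1}), i.e.\ in terms of formal models landing set-theoretically in $Z$, whereas you argue directly with points. Your first half (realizing $(\widehat{\mf{X}}_Z)_\eta$ as the increasing union of the rational subdomains $U_n$) is essentially the paper's argument for the open-immersion part, and the comparison $A\langle J^n/\varpi\rangle\simeq\widehat{A}\langle J^n\widehat{A}/\varpi\rangle$ is correct for the reason you indicate, though it deserves the same careful verification the paper also leaves to the reader.

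There is, however, a genuine gap in your justification that $\bigcup_n U_n$ is overconvergent: it is \emph{not} true that a non-strict inequality $|g(x)|\leqslant|\varpi(x)|$ is preserved under specialization. Already $U=\{x:|T(x)|\leqslant|\varpi(x)|\}$ inside $\Spa(K\langle T\rangle)$ is a quasi-compact rational subdomain which is not overconvergent: the Gauss point $\eta$ of the sub-disc of radius $|\varpi|$ lies in $U$, but its rank-two specialization ``just outside radius $|\varpi|$'' has $|T|=|\varpi|\cdot\gamma^{-1}>|\varpi|$ for an infinitesimal $\gamma<1$ and so escapes $U$. In general, if $y$ is a (necessarily vertical) specialization of $x$, so that $v_x=v_y/H$ for a convex subgroup $H\subseteq\Gamma_y$, then $v_x(g)\leqslant v_x(\varpi)$ only gives $v_y(g)\leqslant v_y(\varpi)h$ with $h\in H$ possibly exceeding $1$. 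The union is nonetheless overconvergent, but this needs an extra step: continuity of $v_y$ and of $v_y/H$ forces $v_y(\varpi)<h$ for every $h\in H$, whence $v_y(g)^2\leqslant v_y(\varpi)^2h^2<v_y(\varpi)$ for each generator $g$ of $J^n$, and one concludes that every specialization of a point of $U_n$ lies in $U_{2n}$. With this repair (or by instead invoking the valuative characterization of the tube, as the paper does) your argument is complete; the rank-one dichotomy and the containment $\bigcup_n U_n\subseteq T(\mf{X}|Z)$ are correct as written.
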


\begin{proof} 
Let us first verify that $(\widehat{\mf{X}}_Z)_\eta\to \mf{X}_\eta$ is an open immersion. By working on an affine open cover we may assume that $\mf{X}=\Spf(A)$ and that $J$ is an admissible ideal of $A$ such that $\underline{\Spf(A/J)}=Z$. But, we know that $\widehat{\mf{X}}_Z=\Spf(B)$ where $B$ is the $J$-adic completion of $A$. One can then directly check that, in the notation of Construction \ref{const:generic-fiber-affine} that each $(B(J^n),B(J^n)^+)$ is a rational localization of $(A_K, A_K^\circ)$ and thus we see from Lemma \ref{lem:concrete-generic-fiber-description} that the map $\mf{X}_\eta\to \Spa(A_K)=\Spf(A)_\eta$ is an open immersion.

So, to show our desired claim it suffices to show that for any quasi-compact and quasi-separated open subset $U$ of $\mf{X}_\eta$ that the map $j\colon U\to\mf{X}_\eta$ factorizes through $T(\mf{X}|Z)$ if and only if it factorizes through $(\widehat{\mf{X}}_Z)_\eta$. But $j$ factorizes through $T(\mf{X}|Z)$ if and only if there exists a morphism of admissible formal schemes $\mf{j}\colon \mf{U}\to \mf{X}$ such that $j=\mf{j}_\eta$ such that $\mf{j}(|\mf{U}|)\subseteq Z$. But, by combining Proposition \ref{prop:completion-functor-description-2} and Proposition \ref{lem:generic-fiber-functor-of-points-1} this is also precisely the condition for $j$ to factorize through $(\widehat{\mf{X}}_Z)_\eta$.
\end{proof}

\section{The specialization map}
\label{s:spec}

In this section we produce the specialization map from the de Jong fundamental group of the rigid generic fiber of a formal scheme to the pro-\'etale fundamental group of its special fiber.

\subsection{Geometric coverings} 

We briefly recall the theory of the pro-\'etale fundamental group \cite[\S7]{BhattScholze}, which is based on the following notion of a covering space. 

\begin{definition}[{\cite[7.3.1(3)]{BhattScholze}}] \label{lem:geom-cover-geom-unib}
    Let $X$ be a locally topologically Noetherian scheme. A morphism of schemes $Y\to X$ is a \emph{geometric covering} if it is \'etale and satisfies the valuative criterion of properness (see \stacks{03IX}). We denote by $\Cov_X$ the category of geometric coverings of $X$, treated as a full subcategory of $\Et_X$.
\end{definition}

The following lemma plays a pivotal role in the construction of the specialization map. 

\begin{lem}[{See \cite[Lemma~7.4.10]{BhattScholze} and its proof}] \label{lem:normal-BS}
    Let $X$ be a locally topologically Noetherian scheme. If $X$ is geometrically unibranch (\stacks{0BQ2}, e.g.\, normal), then every geometric covering of $X$ is the disjoint union of finite \'etale coverings of $X$.
\end{lem}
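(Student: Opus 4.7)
My plan is to reduce to the case where $X$ is connected and show that each connected component $Y_0\subseteq Y$ is finite \'etale over $X$. First, $Y_0\to X$ is surjective: \'etaleness gives an open image, while partial properness forces the image to be stable under specialization, hence closed, so the image is clopen and thus all of the connected $X$. Next, fix a geometric point $\bar x\to X$ and set $\tilde X := \Spec\cO_{X,\bar x}^{\mathrm{sh}}$; the geometrically unibranch hypothesis ensures $\tilde X$ is irreducible. The base change $\tilde Y_0 := Y_0\times_X\tilde X$ is \'etale over the strictly henselian local scheme $\tilde X$, so each point $y$ of $\tilde Y_0$ over the closed point determines a clopen section $\tilde X\hookrightarrow \tilde Y_0$ through $y$. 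Writing $\tilde Y_0$ as the disjoint union of these sections together with a clopen complement $\tilde Y_0'$ having empty closed fiber, I claim $\tilde Y_0' = \emptyset$.

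Indeed, if $\tilde y'\in\tilde Y_0'$ lies over $x'\in\tilde X$, then irreducibility of $\tilde X$ together with Noetherianness forces $x'$ to specialize to the closed point of $\tilde X$; a standard construction produces a valuation ring $V\subseteq \kappa(\tilde y')$ together with a map $\Spec V\to \tilde X$ sending the generic point to $x'$ and the closed point to the closed point of $\tilde X$. Applying the valuative criterion to $\tilde Y_0\to\tilde X$ and the canonical lift $\Spec\kappa(\tilde y')\to\tilde Y_0$ through $\tilde y'$, one extends uniquely to $\Spec V\to\tilde Y_0$, whose image lies in the clopen $\tilde Y_0'$ yet hits a point over the closed point of $\tilde X$---contradicting the fact that $\tilde Y_0'$ has empty closed fiber.

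Consequently $\tilde Y_0$ is a disjoint union $\bigsqcup_{s\in S}\tilde X$ indexed by the geometric fiber $S := Y_0\times_X \bar x$, exhibiting $Y_0\to X$ as trivial in the strict \'etale topology. This endows $S$ with a natural continuous action of $\pi_1^{\et}(X,\bar x)$ under which $Y_0$ corresponds, as an \'etale sheaf, to the $\pi_1^{\et}(X,\bar x)$-set $S$, and connectedness of $Y_0$ makes this action transitive. Since $\pi_1^{\et}(X,\bar x)$ is profinite, a continuous transitive action on a discrete set has finite orbit with open stabilizer; hence $S$ is finite and the associated finite \'etale cover of $X$ must agree with $Y_0$ by the local triviality and the valuative criterion. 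The main obstacle is making the $\pi_1^{\et}$-action rigorous for geometric coverings not a priori of finite type: one must descend the strict-henselization-level trivialization to actual \'etale neighborhoods, using the expression of the strict henselization as a cofiltered limit of pointed \'etale neighborhoods and the locally Noetherian hypothesis on $X$ to spread out individual sections.
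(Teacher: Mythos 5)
The paper does not prove this lemma itself --- it is imported from \cite[Lemma~7.4.10]{BhattScholze} --- and your argument follows the same overall architecture as the proof there: geometric unibranchness makes $\Spec\cO_{X,\ov{x}}^{\mathrm{sh}}$ irreducible, every point of a local scheme specializes to the closed point, and the valuative criterion then forces the pullback of $Y_0$ to the strict henselization to split completely into copies of the base. That part of your sketch is essentially right, modulo two small repairs: a specialization-stable subset need not be closed, so for surjectivity you should first note that $X$ connected and geometrically unibranch (and locally topologically Noetherian) is irreducible, whence a nonempty open specialization-stable subset is everything because its closed complement would be generization-stable yet miss the generic point; and the clopen-ness (and pairwise disjointness) of your sections over $\tilde X$ uses that $Y_0\to X$ is separated, which you should extract from the uniqueness part of the valuative criterion together with quasi-separatedness of the locally topologically Noetherian $Y$.

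The genuine gap is the finiteness step. You assert that $S=Y_0\times_X\ov{x}$ carries a natural \emph{continuous} action of the \emph{profinite} group $\pi_1^{\et}(X,\ov{x})$ for which connectedness of $Y_0$ gives transitivity, and you yourself flag the construction of this action as ``the main obstacle.'' This cannot be deferred: for a general base, connected geometric coverings of infinite degree exist precisely because their monodromy does \emph{not} factor continuously through the profinite $\pi_1^{\et}$ --- this is the point of Example~\ref{ex:tate} and Remark~\ref{rem:etale-nonsplit} --- so producing such an action (let alone its transitivity) is essentially equivalent to the statement being proved, and spreading out individual sections to \'etale neighborhoods does not by itself yield either continuity or transitivity. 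The standard way to close the gap avoids $\pi_1^{\et}$ entirely: since strict henselization is insensitive to \'etale localization, $Y_0$ is itself geometrically unibranch and locally topologically Noetherian, hence its irreducible components are pairwise disjoint and clopen, hence $Y_0$ (being connected) is irreducible; its unique generic point $\xi$ is then the unique point of $Y_0$ over the generic point $\eta$ of $X$, and $\kappa(\xi)/\kappa(\eta)$ is finite separable because $Y_0\to X$ is \'etale. This bounds the generic geometric fiber; the strict-henselization splitting shows the geometric fiber cardinality is locally constant, hence constant and finite; and an \'etale separated map of constant finite fiber degree which splits completely over every strict henselization is finite \'etale (the splitting descends to an \'etale neighborhood of each point, where the finitely many disjoint sections must exhaust $Y_0$ by the fiber count, and finiteness descends along \'etale covers).
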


By \cite[Lemma 7.4.1]{BhattScholze}, for a connected and locally topologically Noetherian scheme $X$ with a geometric point $\ov{x}$, the pair $(\Cov_X,F_{\ov{x}})$ where $F_{\ov{x}}\colon \Cov_X\to\Set$ is the fiber functor at $\ov x$, is a tame infinite Galois category (see \cite[\S7.2]{BhattScholze}). Following \cite[Definition 7.4.2]{BhattScholze} we denote the fundamental group of $(\Cov_X,F_{\ov{x}})$ by $\pi_1^\proet(X,\ov{x})$ and call it the \emph{pro-\'etale fundamental group} of $X$. By definition, it is a Noohi topological group (see \cite[\S7.1]{BhattScholze}) for which the induced functor
\[ 
    F_{\ov{x}} \colon \Cov_X \isomto \pi_1^\proet(X, \ov x)\text{-}\cat{Sets}
\]
is an equivalence, where the target is the category of (discrete) sets with a continuous action of $\pi_1^\proet(X, \ov x)$.

\subsection{Coverings of rigid-analytic spaces}\label{ss:rigid-covering-spaces}

Next, we recall various notions of `covering spaces' of a rigid space, as defined in \cite{ALY1}. Here, we will only use de Jong covering spaces, while the more general $\adm$-covering spaces and $\et$-covering spaces will be needed in the next section. 

Let $X$ be a rigid $K$-space, and let $\tau\in\{\oc,\adm,\et\}$. By a \emph{$\tau$-cover} of $X$ we shall mean a surjective morphism $U\to X$ such that
\begin{itemize}
    \item if $\tau=\oc$, then $U\to X$ is the disjoint union of overconvergent open immersions into $X$, 
    \item if $\tau =\adm$, then $U\to X$ is the disjoint union of open immersions into $X$,
    \item if $\tau=\et$, then $U\to X$ is \'etale.
\end{itemize}

\begin{definition} \label{def:tau-cov}
    Let $X$ be a rigid $K$-space, and let $\tau\in\{\oc,\adm,\et\}$. By a \emph{$\tau$-covering space} of $X$ we mean a morphism $Y\to X$ for which there exists a $\tau$-cover $U\to X$ such that $Y_U\to U$ is the disjoint union of finite \'etale coverings of $U$. We denote by $\Cov_X^\tau$ the full subcategory of $\Et_X$ consisting of $\tau$-covering spaces, and by $\UCov_X^\tau$ the full subcategory of $\Et_X$ consisting of arbitrary disjoint unions of objects of $\Cov_X^\tau$.
\end{definition}

When $X$ is a taut rigid $K$-space (see \cite[Definition 5.1.2]{Huberbook}), then $\Cov_X^\oc$ is naturally equivalent (via the functor in \cite[\S8.3]{Huberbook}) to the category of \'etale covering spaces of the associated Berkovich space $X^\mathrm{Berk}$ considered in \cite{deJongFundamental}. Thus, we shall refer to $\text{oc}$-covering spaces as \emph{de Jong covering spaces}.

For a geometric point $\ov{x}$ let $F_{\ov{x}}\colon \Et_X\to\cat{Set}$ be the fiber functor $F_{\ov{x}}(Y)=\pi_0(Y_{\ov{x}})$. For a connected rigid $K$-space $X$ and $\tau=\oc$, de Jong essentially showed that the pair $(\UCov^\oc_X,F_{\ov{x}})$ is a tame infinite Galois category (see \cite[Theorem~2.9]{deJongFundamental}). More generally, for a connected rigid $K$-space $X$ one knows from \cite[Theorem 5.2.1]{ALY1} that the pair $(\UCov^\tau_X,F_{\ov{x}})$ is a tame infinite Galois category 
for each $\tau\in\{\oc,\adm,\et\}$. In particular, we have an equivalence of categories 
\[
    F_{\ov x}\colon \UCov_X^\tau \isomto \pi_1(\UCov^\tau_X, F_{\ov{x}})\text{-}\cat{Sets}.
\]
We denote the fundamental group of $(\UCov^\oc_X,F_{\ov{x}})$ by $\pi_1^\dJ(X,\ov{x})$ and call it the \emph{de Jong fundamental group}. 

One of the upshots of the material discussed and developed in \S\ref{s:prelims} is the ability to prove the following criterion for when the rigid generic fiber of an \'etale map of formal schemes is a de Jong covering space.

\begin{prop}[Overconvergence criterion] \label{prop:spec-good-case}
    Let $\mf{Y}\to\mf{X}$ be a morphism of admissible formal schemes over $\h_K$ with $\mf{X}$ quasi-compact and such that $\mf{Y}_\eta\to\mf{X}_\eta$ is \'etale. Suppose that for each irreducible component $Z$ of $\mf{X}_k$ one has that $\mf{Y}_Z$ is a disjoint union of finite coverings of $Z$. Then, $\mf{Y}_\eta\to\mf{X}_\eta$ is a de Jong covering space.
\end{prop}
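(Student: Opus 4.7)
The plan is to use the Berthelot tubes of the irreducible components of $\mf{X}_k$ as the overconvergent open cover witnessing the de Jong covering property. Since $\mf{X}$ is quasi-compact and admissible, $\mf{X}_k$ is Noetherian and hence has finitely many irreducible components $Z_1,\dots,Z_n$. By Proposition~\ref{prop:tube-properties}(3), the family $\{T(\mf{X}|Z_i)\}$ is an overconvergent open cover of $\mf{X}_\eta$. It therefore suffices to check that for each $i$ the pullback $\mf{Y}_\eta\times_{\mf{X}_\eta} T(\mf{X}|Z_i)\to T(\mf{X}|Z_i)$ is a disjoint union of finite \'etale coverings.

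To do this I would identify the tube open with the generic fiber of a formal completion. By Proposition~\ref{prop:tube-completion-comparison}, the morphism $(\widehat{\mf{X}}_{Z_i})_\eta\to \mf{X}_\eta$ is an open immersion with image $T(\mf{X}|Z_i)$, and since $(-)_\eta$ commutes with finite limits we obtain a canonical identification
\[
    \mf{Y}_\eta\times_{\mf{X}_\eta} T(\mf{X}|Z_i)\;\simeq\;\bigl(\mf{Y}\times_{\mf{X}}\widehat{\mf{X}}_{Z_i}\bigr)_\eta.
\]
So the problem is reduced to showing that $\mf{Y}\times_{\mf{X}}\widehat{\mf{X}}_{Z_i}\to \widehat{\mf{X}}_{Z_i}$ is, as a morphism of formal schemes, a disjoint union of finite morphisms; its generic fiber will then be a disjoint union of finite maps by Proposition~\ref{prop:generic-fiber-props} and the compatibility of $(-)_\eta$ with disjoint unions, and combined with the given \'etaleness of $\mf{Y}_\eta\to\mf{X}_\eta$, each piece will automatically be finite \'etale.

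The finiteness after completion is exactly the content of Proposition~\ref{prop:alt-refined-topological-invariance}, applied with $\mc{Z}=Z_i$ taken with its reduced subscheme structure in $\mf{X}$. Its hypotheses are satisfied here: the morphism $\mf{Y}\to\mf{X}$ is adically locally of finite presentation because both formal schemes are admissible over $\h_K$, and $\underline{\mf{X}}$ is locally Noetherian because $\mf{X}_k$ is locally of finite type over $k$. The input is precisely our assumption that $\mf{Y}_{Z_i}$ is a disjoint union of finite coverings of $Z_i$. The main conceptual obstacle — which the cited propositions package up — is that the scheme-theoretic splitting of $\mf{Y}$ over a single component $Z_i$ has no obvious reason to extend to an \'etale or formal neighborhood of $Z_i$ in $\mf{X}$; what makes the argument close is that the overconvergent neighborhood $T(\mf{X}|Z_i)$ detects only the behavior along $Z_i$ in the formal limit, so that working over the completion $\widehat{\mf{X}}_{Z_i}$ is in fact the correct replacement for an honest Zariski neighborhood.
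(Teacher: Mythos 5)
Your proposal is correct and follows essentially the same route as the paper: cover $\mf{X}_\eta$ by the tubes $T(\mf{X}|Z_i)$, identify each tube with $(\widehat{\mf{X}}_{Z_i})_\eta$ via Proposition~\ref{prop:tube-completion-comparison}, propagate the splitting into finite pieces from $Z_i$ to the formal completion via Proposition~\ref{prop:alt-refined-topological-invariance}, and pass to generic fibers with Proposition~\ref{prop:generic-fiber-props}. The verification of the hypotheses of Proposition~\ref{prop:alt-refined-topological-invariance} (adically of finite presentation for admissible formal schemes, local Noetherianity of $\underline{\mf{X}}$) is also as in the paper.
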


\begin{proof} 
Let $Z_1, \ldots, Z_r$ be the irreducible components of $\mf{X}_k$ and let $\widehat{\mf{X}}_i$ for $i=1, \ldots, r$ be the formal completions of $\mf{X}$ along $Z_i$. Since the pullback of $\mf{Y}_{\widehat{\mf{X}}_i}\to \widehat{\mf{X}}_i$ to $Z_i$ is the disjoint union of finite maps, we know by Proposition~\ref{prop:alt-refined-topological-invariance} that the same holds for $\mf{Y}_{\widehat{\mf{X}}_i}\to \widehat{\mf{X}}_i$. We deduce from Proposition~\ref{prop:generic-fiber-props} and Proposition~\ref{prop:tube-completion-comparison} that the pullback of $\mf{Y}_\eta$ to $T(\mf{X}|Z_i)$ is the disjoint union of finite \'etale coverings (\'etale by our assumption on $\mf{Y}_\eta\to\mf{X}_\eta$) for all $i$. Since $\{T(\mf{X}|Z_i)\}$ is an overconvergent open cover of $\mf{X}_\eta$ by Proposition \ref{prop:tube-properties} we are done.
\end{proof}

\subsection{The \'etale site of a formal scheme}

We now recall the topological invariance of the etale site of a formal scheme.

\begin{prop} \label{topological invariance formal schemes}     
    Let $\mf{X}'\to \mf{X}$ be a universal homeomorphism of formal schemes. Then, the induced functor $\Et_\mf{X}\to \Et_{\mf{X}'}$ is an equivalence. In particular, the functor $\Et_{\mf{X}}\to\Et_{\underline{\mf{X}}}$ is an equivalence, and if $\mf{X}$ is an adic formal scheme over $\cO_K$, then $\Et_{\mf{X}}\to\Et_{\mf{X}_k}$ is an equivalence.
\end{prop}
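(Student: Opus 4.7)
The plan is to reduce the statement to the classical topological invariance of the small \'etale site under universal homeomorphisms of schemes (see e.g.\ \stacks{04DZ}). The bridge between the formal and scheme-theoretic settings is the equivalence $\Et_\mf{X} \simeq \Et_{\mf{X}(\mc{I})}$ valid for any formal scheme $\mf{X}$ and any ideal of definition $\mc{I} \subseteq \h_\mf{X}$. I would establish this bridge in two steps. First, since \'etale morphisms of formal schemes are adic by definition, an \'etale formal scheme $\mf{Y}\to \mf{X}$ is uniquely determined by (and reconstructible from) the compatible system of scheme-theoretic reductions $\mf{Y}(\mc{I}^n\h_\mf{Y})\to\mf{X}(\mc{I}^n)$, yielding an equivalence $\Et_\mf{X} \simeq \varprojlim_n \Et_{\mf{X}(\mc{I}^n)}$; this is the formal-scheme analog of deformation theory for \'etale morphisms and is standard in Fujiwara--Kato. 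Second, each transition functor $\Et_{\mf{X}(\mc{I}^n)} \to \Et_{\mf{X}(\mc{I})}$ is an equivalence by the scheme-theoretic topological invariance applied to the nilpotent thickening $\mf{X}(\mc{I}) \hookrightarrow \mf{X}(\mc{I}^n)$.

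With the bridge in hand, the main statement is immediate. Given an (adic) universal homeomorphism $f\colon \mf{X}'\to \mf{X}$, I would choose compatible ideals of definition $\mc{I}$ of $\mf{X}$ and $\mc{I}'$ of $\mf{X}'$ with $f^*\mc{I} \subseteq \mc{I}'$. The induced morphism of schemes $\mf{X}'(\mc{I}')\to \mf{X}(\mc{I})$ then has the same underlying map of topological spaces as $f$ and induces the universal homeomorphism $\underline{\mf{X}'}\to \underline{\mf{X}}$ on reduced subschemes, so it is itself a universal homeomorphism. By \stacks{04DZ} we obtain $\Et_{\mf{X}(\mc{I})}\simeq \Et_{\mf{X}'(\mc{I}')}$; sandwiching this equivalence between two applications of the bridge delivers the desired $\Et_\mf{X}\simeq \Et_{\mf{X}'}$.

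The two in-particular statements are immediate specializations. The closed immersion $\underline{\mf{X}} \hookrightarrow \mf{X}$ is cut out by $\h_\mf{X}^\cc$, which contains a power of every ideal of definition and is therefore a universal homeomorphism. When $\mf{X}$ is an adic formal scheme over $\cO_K$, the closed immersion $\mf{X}_k \hookrightarrow \mf{X}$ is cut out by $\varpi\h_\mf{X}$, which is itself an ideal of definition of $\h_\mf{X}$ by the adic hypothesis, so the same argument applies. The principal technical obstacle lies in the bridge, specifically the algebraization half that produces an \'etale formal scheme from a compatible tower of \'etale schemes; this is well-documented in the formal-scheme literature but is the only nontrivial input to the argument.
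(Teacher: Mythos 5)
Your proposal is correct and follows essentially the same route as the paper: both rest on the scheme-level topological invariance \stacks{04DZ} together with the identification $\mf{X}=\varinjlim_n \mf{X}(\mc{I}^n)$ from Fujiwara--Kato (which supplies the deformation/algebraization of \'etale morphisms along the tower of thickenings) to get the bridge $\Et_\mf{X}\simeq\Et_{\mf{X}(\mc{I})}$. The only difference is organizational --- the paper reduces the general universal homeomorphism to the canonical one $\underline{\mf{X}}\to\mf{X}$ by first applying \stacks{04DZ} to $\underline{\mf{X}}'\to\underline{\mf{X}}$, whereas you compare $\mf{X}'(\mc{I}')\to\mf{X}(\mc{I})$ directly --- and this has no mathematical consequence.
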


\begin{proof} 
Note that since $\mf{X}'\to \mf{X}$ is a universal homeomorphism, so then is the morphism of schemes $\underline{\mf{X}}'\to\underline{\mf{X}}$. So, the induced morphism $\Et_{\underline{\mf{X}}}\to\Et_{\underline{\mf{X}}'}$ is an equivalence by \stacks{04DZ}. This clearly then reduces us to showing that, in general, the map $\Et_{\mf{X}}\to\Et_{\underline{\mf{X}}}$ is an equivalence. This assertion is clearly local on $\mf{X}$ and so we may assume without loss of generality that $\mf{X}$ is quasi-compact and quasi-separated and hence has an ideal sheaf of definition $\mc{I}$. Since the map of schemes $\underline{\mf{X}}\to \mf{X}(\mc{I})$ induces an equivalence on \'etale sites by loc.\@ cit.\@ we are reduced to showing that the map $\Et_\mf{X}\to\Et_{\mf{X}(\mc{I})}$ is an equivalence. But, this follows by combining loc.\@ cit.\@ and \cite[Chapter I, Proposition 1.4.2]{FujiwaraKato} since $\mf{X}=\varinjlim \mf{X}(\mc{I}^n)$ and each $\mf{X}(\mc{I}^n)\to \mf{X}(\mc{I}^{n+1})$ is a thickening of schemes.
\end{proof}

\subsection{Statement of the main result}

Let $\mf{X}$ be a formal scheme locally of finite type over $\cO_K$ and let $X=\mf{X}_\eta$ be its rigid generic fiber. We have the functors between the categories of \'etale objects
\[ 
	\xymatrix{
		\Et_{X} & \Et_{\mf{X}} \ar[l]_{(-)_\eta} \ar[r]^{(-)_k} & \Et_{\mf{X}_k}
	} 
\]
where $(-)_\eta$ (resp.\ $(-)_k$) is the functor which maps $\mf{Y}\to \mf{X}$ to $\mf{Y}_{\eta}\to \mf{X}_{\eta}=X$ (resp.\@ $\mf{Y}_k\to\mf{X}_k$). The functor $(-)_k$ is an equivalence by Proposition~\ref{topological invariance formal schemes}. Let us denote by
\[ 
	u\colon \Et_{\mf{X}_k} \to \Et_{X}
\]
the composition $u=(-)_\eta\circ (-)_k^{-1}$. The main result of this section can then be stated as follows.

\begin{thm} \label{thm:specialization}
	Let $\mf{X}$ be a quasi-paracompact formal scheme locally of finite type over $\cO_K$ and let $X=\mf{X}_\eta$. Then, the functor $u$ maps $\Cov_{\mf{X}_k}$ into $\Cov^\oc_X$.
\end{thm}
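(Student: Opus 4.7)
The plan is to apply Proposition~\ref{prop:spec-good-case} (the overconvergence criterion) after reducing to the quasi-compact case and performing a suitable admissible blowup on $\mf{X}$ so that the pullback to each irreducible component of the (modified) special fiber is a disjoint union of finite morphisms.

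\emph{Reduction to the quasi-compact case.} Because $\mf{X}$ is quasi-paracompact, we cover it by quasi-compact open formal subschemes $\mf{X}_\alpha$. Being a de Jong covering space is defined by the existence of an overconvergent open cover of the base over which the map splits into finite \'etale coverings, and is in particular local on an overconvergent open cover. Combined with the tube covers of Proposition~\ref{prop:tube-properties}, this lets us work one $\mf{X}_\alpha$ at a time. Thus we may and do assume $\mf{X}$ is quasi-compact.

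\emph{Setup via topological invariance.} By Proposition~\ref{topological invariance formal schemes}, the geometric covering $Y\to\mf{X}_k$ lifts uniquely to an \'etale morphism of formal schemes $\mf{Y}\to\mf{X}$ with $\mf{Y}_k = Y$, and $u(Y)=\mf{Y}_\eta$. By Proposition~\ref{prop:generic-fiber-props}, $\mf{Y}_\eta\to\mf{X}_\eta$ is \'etale. To invoke Proposition~\ref{prop:spec-good-case} it remains to find an admissible blowup $\pi\colon \mf{X}'\to\mf{X}$ such that, setting $\mf{Y}':=\mf{Y}\times_{\mf{X}}\mf{X}'$, the map $\mf{Y}'_{Z'}\to Z'$ is a disjoint union of finite morphisms for every irreducible component $Z'$ of $\mf{X}'_k$. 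Since $\pi_\eta$ is an isomorphism and $(-)_\eta$ commutes with fiber products, a positive answer on $\mf{X}'$ then transports back to the desired statement about $\mf{Y}_\eta\to\mf{X}_\eta$.

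\emph{Main step: finding the right blowup.} For each irreducible component $Z\subseteq\mf{X}_k$, let $\widetilde Z\to Z$ be its normalization. Then $Y\times_{\mf{X}_k}\widetilde Z\to \widetilde Z$ is a geometric covering of a normal (hence geometrically unibranch) scheme, and by Lemma~\ref{lem:normal-BS} it is a disjoint union of finite \'etale coverings of $\widetilde Z$. The key is therefore to construct an admissible blowup $\pi\colon\mf{X}'\to\mf{X}$ with the property that every irreducible component $Z'$ of $\mf{X}'_k$ maps to some $Z$ through (or at least factoring through) the normalization $\widetilde Z\to Z$. With such a $\pi$ in hand, the pullback of $\mf{Y}$ to any such $Z'$ is the pullback along $Z'\to\widetilde Z$ of a disjoint union of finite \'etale coverings, hence itself a disjoint union of finite morphisms. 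The existence of such a blowup is the content of the delicate blowup procedure referenced as Proposition~\ref{prop:nice-blowup} in the introduction, and is the main technical obstacle in the proof: one must simultaneously normalize the components of $\mf{X}_k$ by admissible (in particular, finite-type and adic) modifications of $\mf{X}$ without destroying flatness over $\cO_K$, and this requires combining the strict-transform formalism recalled in Section~\ref{s:prelims} with care about how Berthelot tubes of the new components interact with the old.

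\emph{Conclusion.} Once the blowup $\mf{X}'$ and the model $\mf{Y}'=\mf{Y}\times_\mf{X}\mf{X}'$ of the previous step are produced, the hypotheses of Proposition~\ref{prop:spec-good-case} are satisfied for $\mf{Y}'\to\mf{X}'$, so $\mf{Y}'_\eta\to\mf{X}'_\eta$ is a de Jong covering space. Since $\mf{X}'_\eta=\mf{X}_\eta$ and $\mf{Y}'_\eta=\mf{Y}_\eta=u(Y)$, this yields the theorem. The expected difficulty is concentrated entirely in the blowup construction of Step~3; the remaining ingredients (topological invariance, Berthelot tubes, and the overconvergence criterion) have already been put in place in the preceding sections.
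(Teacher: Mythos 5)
Your architecture matches the paper's: lift $Y$ to an \'etale $\mf{Y}\to\mf{X}$ via topological invariance, blow up $\mf{X}$ so that the components of the new special fiber factor through normalizations, split $Y$ there using Lemma~\ref{lem:normal-BS}, and conclude with Proposition~\ref{prop:spec-good-case}. The genuine gap is in your reduction to the quasi-compact case. Being a de Jong covering space is \emph{not} local for arbitrary open covers of the base --- that is exactly the point of Example~\ref{ex:oc-crit-counterex} (equivalently \cite[\S5.3]{ALY1}), where a map splits over the generic fibers of two opens of a formal model yet is not a de Jong covering. So ``cover $\mf{X}$ by quasi-compact opens $\mf{X}_\alpha$ and work one at a time'' proves only that $u(Y)$ is an $\adm$-covering space unless you exhibit an \emph{overconvergent} cover refining $\{\mf{X}_{\alpha,\eta}\}$. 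Your appeal to Proposition~\ref{prop:tube-properties} does not supply one: that proposition is stated for $\mf{X}$ of finite type, and the tubes you could form from the closures of the $|\mf{X}_\alpha|$ are \emph{larger} than the $\mf{X}_{\alpha,\eta}$, not smaller; to get tubes contained in the $\mf{X}_{\alpha,\eta}$ you would need to shrink the open cover to a closed cover, which is not available on a spectral space. The paper instead uses that $X$ is taut (quasi-paracompact and quasi-separated) to find, for each point $x$, a quasi-compact open $U_x$ containing an overconvergent open neighborhood of $x$, and then an admissible blowup of $\mf{X}$ realizing $U_x$ as the generic fiber of an open formal subscheme; checking the de Jong property on each $U_x$ then does suffice, because the resulting overconvergent opens of the $U_x$ are overconvergent in $X$.

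Two smaller points. First, the property you ask of the blowup is misstated: you require every irreducible component $Z'$ of $\mf{X}'_k$ to factor through the normalization of an irreducible component $Z$ of $\mf{X}_k$, but the exceptional components of $\mf{X}'_k$ have images that are proper closed integral subschemes $W\subsetneq Z$, and one cannot in general factor $Z'\to W\hookrightarrow Z$ through $n(Z)$. What Proposition~\ref{prop:nice-blowup} actually provides is a factorization of $Z'\to W$ through $n(W)$ for $W$ the image of $Z'$; this is enough, since the restriction of a geometric covering to any closed subscheme is again a geometric covering, so Lemma~\ref{lem:normal-BS} applies to $Y_{n(W)}$ and its pullback to $Z'$ splits as required. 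Second, Proposition~\ref{prop:nice-blowup} requires $\mf{X}$ admissible, so after reducing to the finite type case you must first blow up an ideal sheaf of definition to kill $\cO_K$-torsion. Finally, be aware that citing Proposition~\ref{prop:nice-blowup} as a black box means the bulk of the technical work (Lemmas~\ref{lemma:blowup} and~\ref{lem:lifting-ideal-sheaf} and the codimension induction) is deferred rather than carried out; this mirrors the paper's own organization, but it is where the real content of the theorem lives.
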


Let us interpret this result in terms of fundamental groups. To this end, we first need to discuss compatible choices of base points for $\mf{X}_k$ and $X$. Let $L$ be an algebraically closed non-archimedean field and let $\ov{\xi}\colon \Spf(\cO_L)\to \mf{X}$ be a morphism of formal schemes, inducing geometric points $\ov{\xi}_\eta\colon \Spa(L)\to X$ and $\ov{\xi}_0\colon \Spec(\ell)\to \mf{X}_k$, where $\ell = \cO_L/\mf{m}_L$ is the residue field of $L$. In this situation, if $\mf{Y}\to \mf{X}$ is an \'etale morphism, then the induced maps
\[ 
    \Hom_X(\Spa(L), \mf{Y}_\eta) \leftarrow \Hom_{\mf{X}}(\Spf(\cO_L), \mf{Y}) \to \Hom_{\mf{X}_k}(\Spec(\ell), \mf{Y}_k)
\]
are bijective. To see this, we may replace $\mf{X}$ with $\Spf(\cO_L)$, and then it suffices to show that every \'etale morphism $\mf{Y}\to\mf{X}$ is a disjoint union of copies of $\mf{X}$. This follows from Proposition~\ref{topological invariance formal schemes} since  $\underline{\mf{X}}=\Spec(\ell)$.

\begin{cor}\label{cor:specialization}
	Let $\mf{X}$ be a quasi-paracompact formal scheme locally of finite type over $\cO_K$ whose special fiber $\mf{X}_k$ and rigid generic fiber $X=\mf{X}_\eta$ are both connected. Then for a compatible choice of base points as above, the functor $u$ induces a continuous homomorphism of Noohi groups
	\[ 
		\pi_1^\dJ(X, \ov{\xi}_\eta) \to \pi_1^\proet(\mf{X}_k, \overline{\xi}_0).
	\]
	Its image is dense if $\mf{X}$ is $\eta$-normal (see Appendix~\ref{app:eta-normal}).
\end{cor}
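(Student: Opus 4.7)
The plan is to obtain the homomorphism as a formal consequence of Theorem~\ref{thm:specialization} via the standard formalism of tame infinite Galois categories, and to reduce density of the image to the assertion that $u$ preserves connectedness. By Theorem~\ref{thm:specialization}, the functor $u=(-)_\eta\circ(-)_k^{-1}\colon \Et_{\mf{X}_k}\to \Et_X$ restricts to $u\colon \Cov_{\mf{X}_k}\to \Cov_X^\oc\subseteq \UCov_X^\oc$. This restriction commutes with disjoint unions and finite limits, and the bijections
\[
\Hom_X(\Spa(L),\mf{Y}_\eta)\leftarrow \Hom_{\mf{X}}(\Spf(\cO_L),\mf{Y})\to \Hom_{\mf{X}_k}(\Spec(\ell),\mf{Y}_k)
\]
recalled before the statement of the corollary supply a natural isomorphism $F_{\ov{\xi}_\eta}\circ u\simeq F_{\ov{\xi}_0}$. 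The general Galois formalism (see \cite[\S7.2]{BhattScholze} and \cite[\S5]{ALY1}) then attaches to such a fiber-functor-preserving functor a continuous homomorphism of the corresponding Noohi groups, giving the desired $\pi_1^\dJ(X,\ov{\xi}_\eta)\to \pi_1^\proet(\mf{X}_k,\ov{\xi}_0)$.

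For the density statement, I will appeal to the general principle for tame infinite Galois categories that the image of a homomorphism of Noohi groups induced by a fiber-functor-preserving functor is dense precisely when that functor sends connected objects to connected objects. Let $Y\to\mf{X}_k$ be a connected geometric covering and let $\mf{Y}\to\mf{X}$ be the unique \'etale formal scheme with $\mf{Y}_k=Y$ provided by Proposition~\ref{topological invariance formal schemes}, so that $u(Y)=\mf{Y}_\eta$. Since $|\mf{Y}|=|\mf{Y}_k|$ is connected, $\mf{Y}$ itself is connected. Because \'etale maps preserve $\eta$-normality (a fact to be verified directly from the local definition in Appendix~\ref{app:eta-normal}), the formal scheme $\mf{Y}$ is $\eta$-normal, and the crucial property of $\eta$-normality to be established in the appendix gives a bijection between connected components of $\mf{Y}_\eta$ and of $\mf{Y}$. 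Hence $u(Y)=\mf{Y}_\eta$ is connected, as needed.

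The main obstacle is the last input concerning $\eta$-normality: one needs to know that an $\eta$-normal formal scheme and its rigid generic fiber have the same set of connected components. Morally, this is because ``$\eta$-normal'' means ``integrally closed in the generic fiber'', so any idempotent of $\cO_{\mf{X}_\eta}$, being integral over $\cO_\mf{X}$, should lift to $\cO_\mf{X}$ and hence to $\cO_{\mf{X}_k}$. Turning this intuition into a precise statement, and verifying that \'etale base change preserves the condition, should occupy the appendix; the two remaining pieces (continuity of the homomorphism and the connectedness criterion for density) are purely formal consequences of the infinite Galois category machinery already set up in \cite{BhattScholze} and \cite{ALY1}.
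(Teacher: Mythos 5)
Your proposal is correct and follows essentially the same route as the paper: the homomorphism comes from Theorem~\ref{thm:specialization} plus the identification of fiber functors and the Noohi/infinite-Galois formalism of \cite{BhattScholze} and \cite{ALY1}, and density is reduced to $u$ preserving connectedness, which is exactly what the appendix supplies via Proposition~\ref{prop:etale-eta-normal} (\'etale maps preserve $\eta$-normality) and Lemma~\ref{lem:connected generic fiber} (an $\eta$-normal connected formal scheme has connected generic fiber, proved by the idempotent argument you sketch). The only cosmetic over-claim is the ``bijection of connected components''; the paper only proves, and you only need, the one implication that connectedness of $\mf{Y}$ forces connectedness of $\mf{Y}_\eta$.
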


\begin{proof} 
Given Theorem \ref{thm:specialization}, the existence of the specialization map is immediate from \cite[Theorem 7.2.5 2.]{BhattScholze}, since by the discussion above there is a natural identification of geometric fibers $F_{\ov{\xi}_\eta}(u(Y)) \simeq F_{\ov{\xi}_0}(Y)$.

The only part left to be justified is the claim of dense image. Note that by \cite[Proposition 2.37.(2)]{LaraI} and the proof of \cite[Theorem 5.2.1]{ALY1} it suffices to show that if $Y\to \mf{X}_k$ is a connected geometric covering and $\mf{Y}\to\mf{X}$ its unique \'etale deformation, then $\mf{Y}_\eta=u(Y)$ is connected. But, since $\mf{Y}\to \mf{X}$ is \'etale, the formal scheme $\mf{Y}$ is $\eta$-normal by Proposition~\ref{prop:etale-eta-normal}, and we use Lemma~\ref{lem:connected generic fiber} to conclude.
\end{proof}

The natural converse to Theorem \ref{thm:specialization} is relatively straightforward given our setup, and we obtain the following (cf.\@ \cite[Chapter II, Theorem 7.5.17]{FujiwaraKato}).

\begin{cor} \label{cor:tfae}
    Let $\mf{Y}\to\mf{X}$ be an \'etale map of admissible formal schemes over $\h_K$, with $\mf{X}$ quasi-paracompact. Then, the following are equivalent:
    \begin{enumerate}[(a)]
    \item $\mf{Y}_k\to\mf{X}_k$ is a geometric covering,
    \item $\mf{Y}_\eta\to\mf{X}_\eta$ is a de Jong covering space,
    \item $\mf{Y}_\eta\to\mf{X}_\eta$ is partially proper.
    \end{enumerate}
\end{cor}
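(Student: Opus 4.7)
The plan is to establish the cyclic chain of implications (a) $\Rightarrow$ (b) $\Rightarrow$ (c) $\Rightarrow$ (a). The bulk of the work has already been done: the implication (a) $\Rightarrow$ (b) is essentially a restatement of Theorem~\ref{thm:specialization}, and (c) $\Rightarrow$ (a) is a direct application of Lemma~\ref{lem:pp-generic-special}. Only the middle implication (b) $\Rightarrow$ (c) will require its own (short) argument.

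For (a) $\Rightarrow$ (b), I first observe that by Proposition~\ref{topological invariance formal schemes} the étale morphism $\mf{Y}\to\mf{X}$ is the unique étale lift of $\mf{Y}_k\to\mf{X}_k$, so that $\mf{Y}_\eta = u(\mf{Y}_k)$. If $\mf{Y}_k\to\mf{X}_k$ is a geometric covering, then Theorem~\ref{thm:specialization} (applied using the quasi-paracompactness of $\mf{X}$) yields that $\mf{Y}_\eta\to\mf{X}_\eta$ is a de Jong covering space. For (b) $\Rightarrow$ (c), by definition there is an overconvergent open cover $\{U_i\}$ of $\mf{X}_\eta$ along which $f := \mf{Y}_\eta\to\mf{X}_\eta$ becomes a disjoint union of finite étale coverings. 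Finite étale maps are proper, hence partially proper, so each $f|_{U_i}$ is partially proper. The plan is then to argue that partial properness descends along overconvergent covers of the target: separatedness is admissibly local, and the valuative/specialization-lifting criterion passes through because an overconvergent open is stable under specialization, so any $\Spa(V)\to\mf{X}_\eta$ whose generic point lies in $U_i$ factors entirely through $U_i$, reducing to the partial properness of $f|_{U_i}$. The same argument works after arbitrary base change, giving universal specialization.

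Finally, for (c) $\Rightarrow$ (a), Lemma~\ref{lem:pp-generic-special} directly yields that $\mf{Y}_k\to\mf{X}_k$ is partially proper. Since this map is étale (as the special fiber of an étale morphism of admissible formal schemes) and $\mf{X}_k$ is locally of finite type over $k$ (hence locally topologically Noetherian), this is by definition a geometric covering. The only mildly non-trivial step is the descent of partial properness in (b) $\Rightarrow$ (c), but this is essentially formal once one unwinds the definition of overconvergence; everything else is a direct invocation of results proved earlier in the paper.
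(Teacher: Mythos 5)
Your proposal is correct and follows essentially the same route as the paper: (a)$\Rightarrow$(b) is Theorem~\ref{thm:specialization}, (b)$\Rightarrow$(c) uses that partial properness is local on the target together with the partial properness of disjoint unions of finite \'etale coverings, and (c)$\Rightarrow$(a) is Lemma~\ref{lem:pp-generic-special}. The only cosmetic difference is that you justify the locality in (b)$\Rightarrow$(c) via overconvergence of the $U_i$, whereas it already holds for arbitrary open covers (given a specialization $x'$ of $f(y)$, pick the $U_i$ containing $x'$; it then contains $f(y)$ since opens are stable under generalization).
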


\begin{proof} 
That (a) implies (b) is precisely the content of Theorem \ref{thm:specialization}. To see that (b) implies (c) is simple since being partially proper is local on the target, and evidently a disjoint union of finite \'etale coverings is partially proper. That (c) implies (a) is then given by Lemma \ref{lem:pp-generic-special}. 
\end{proof}

\begin{rem} \label{rem:etale-nonsplit}
The following example shows that geometric coverings might not be the disjoint union of finite \'etale coverings \'etale locally on the target, illustrating the essential difficulty behind Theorem~\ref{thm:specialization}. Let $k$ be an algebraically closed field, and let $X$ be the nodal curve obtained by gluing two copies $X^+$, $X^-$ of $\mb{G}_{m,k}$ at a closed point $x$. For $n\geqslant 0$, let $Y^\pm_n\to X^\pm$ be the connected cyclic covering of degree equal to the $n$-th prime number. Let $Y^+ = \coprod_{n\geqslant 0} Y_n^+$ and $Y^-=\coprod_{n>0} Y_n^-$, and let $Y\to X$ be the geometric covering of $X$ with $Y|_{X^\pm}\simeq Y^\pm$ obtained by identifying the fibers of $Y^\pm$ at $x$ as in the picture below.
\begin{displaymath}
    \mathrlap{\underbrace{\phantom{\bullet \quad \bullet \quad}}_{Y_0^+}}
    \mathrlap{\overbrace{\phantom{\ldots \bullet \quad \bullet \quad }}^{Y_1^-}}
    \bullet \quad \bullet \quad
    \mathrlap{\underbrace{\phantom{\bullet \quad \bullet \quad \bullet \quad}}_{Y_1^+}}
    \bullet \quad 
    \mathrlap{\overbrace{\phantom{\bullet \quad \bullet \quad \bullet \quad \bullet \quad \bullet \quad}}^{Y_2^{-}}}
    \bullet \quad \bullet \quad 
    \mathrlap{\underbrace{\phantom{\bullet \quad \bullet \quad \bullet \quad \bullet \quad \bullet \quad}}_{Y_2^+}}
    \bullet \quad \bullet \quad \bullet \quad
    \mathrlap{\overbrace{\phantom{\bullet \quad \bullet \quad \bullet \quad \bullet \quad \bullet \quad \bullet \quad \bullet \quad }}^{Y_3^{-}}}
    \bullet \quad \bullet \quad 
    \mathrlap{\underbrace{\phantom{\bullet \quad \bullet \quad \bullet \quad \bullet \quad \bullet \quad \bullet \quad  \bullet \quad }}_{Y_3^+}}
    \bullet \quad \bullet 
    \quad \bullet \quad \bullet \quad \bullet \quad 
    \mathrlap{\overbrace{\phantom{\bullet \quad \bullet \quad \bullet \ldots \quad}}^{Y_4^{-}}}
    \bullet \quad \bullet \quad 
    \mathrlap{\underbrace{\phantom{\bullet \ldots \quad}}_{Y_4^+}}
    \bullet  \ldots
\end{displaymath}  
We sketch the proof that $Y\to X$ is not the disjoint union of finite \'etale coverings in any \'etale neighborhood of $x$. It suffices to observe that for a connected quasi-compact \'etale neighborhood $U^\pm\to X^\pm$ of $x$, the pull-backs $Y^\pm_n\times_{X^\pm} U^\pm\to U^\pm$ are connected for almost all $n$. But $U^\pm$ is an integral curve, and $Y^\pm_n\times_{X^\pm} U^\pm$ will be connected as long as the $n$-th prime does not divide the degree of the field extension $k(U^\pm)/k(X^\pm)$. 
\end{rem}

\subsection{The proof}

The strategy of proof of Theorem~\ref{thm:specialization} is to utilize the overconvergence criterion (Proposition~\ref{prop:spec-good-case}). In particular, if all of the irreducible components of $\mf{X}_k$ are normal then we are done by Lemma~\ref{lem:normal-BS}. Of course, this condition on the components of $\mf{X}_k$ will rarely be satisfied for a given formal scheme $\mf{X}$. 

To remedy this we perform a suitable admissible blowup $\mf{X}'\to \mf{X}$ (Proposition~\ref{prop:nice-blowup}). Namely, even though the normalization $n(\mf{X}_k)\to \mf{X}_k$ usually does not extend to a proper birational map to $\mf{X}$, in most situations it is a blowup, and in any case it is dominated by some blowup $W\to \mf{X}_k$. Then, blowing up the same center on $\mathfrak{X}$ gives an admissible blowup $\mathfrak{X}'\to\mathfrak{X}$ such that the strict transform of $\mf{X}_k$ factors through $n(\mf{X}_k)$. The restriction of a given geometric covering of $\mf{X}_k$ to $W\subseteq \mf{X}'_k$ is thus the disjoint union of finite \'etale maps. However, the blowup $\mathfrak{X}'\to \mathfrak{X}$ introduces new irreducible components of $\mathfrak{X}'_k$ whose images in $\mf{X}_k$ have positive codimension. We handle those in a similar way, and the proof proceeds by induction.

\begin{lem} \label{lemma:blowup}
	Let $\mf{X}$ be a quasi-compact admissible formal scheme over $\cO_K$ and let $Z_1, \ldots, Z_r$ be closed integral subschemes of $\mf{X}_k$ such that $Z_i\not\subseteq Z_j$ for $i\neq j$. Then there exists an admissible blowup $\mf{X}'\to\mf{X}$ whose center does not contain any $Z_i$ and such that the strict transforms $Z'_i\to Z_i$ in $\mf{X}'$ are birational and factor through the normalization of $Z_i$ for each $i=1, \ldots, r$.
\end{lem}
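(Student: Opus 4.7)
The plan is to induct on $r$, performing one admissible blowup per step to normalize one $Z_i$ at a time. Two facts drive the argument. First, each $Z_i$ is an integral scheme of finite type over $k$, hence Nagata, so the normalization $\tilde{Z}_i\to Z_i$ is finite; any finite birational morphism to an integral Noetherian scheme is a blowup, so there exists a proper coherent ideal $\mc{I}_i^{(0)}\subsetneq\cO_{Z_i}$ such that $\tilde{Z}_i$ is the blowup of $Z_i$ along $\mc{I}_i^{(0)}$. Second, compositions of admissible blowups of quasi-compact admissible formal schemes are admissible blowups (a standard fact, cf.\ \cite[Chapter II, \S1]{FujiwaraKato}), so the iterated construction yields a single admissible blowup of $\mf{X}$ at the end.

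For the base case $r=1$, I lift $\mc{I}_1^{(0)}$ to an admissible ideal $\mc{K}_1\subseteq\cO_{\mf{X}}$ by taking the preimage under the surjection $\cO_{\mf{X}}\twoheadrightarrow\cO_{Z_1}$; this preimage is of finite type (since the ideal of $Z_1$ in $\mf{X}$ is, as $\mf{X}_k$ is locally Noetherian) and open (it contains $\varpi$). Then $\mf{X}'=\mf{X}_{\mc{K}_1}$ has center $V(\mc{K}_1)\subsetneq Z_1$, which by the non-inclusion hypothesis does not contain any $Z_i$. Since the closed embedding $Z_1\hookrightarrow\mf{X}$ is adic, the strict transform identification from \S\ref{subsec:formal-schemes} exhibits the strict transform of $Z_1$ in $\mf{X}'$ as the blowup of $Z_1$ along $\mc{K}_1\cdot\cO_{Z_1}=\mc{I}_1^{(0)}$, namely $\tilde{Z}_1$.

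For the inductive step, given $\pi\colon\mf{X}^{(r-1)}\to\mf{X}$ from the inductive hypothesis, let $Z_r^{(r-1)}$ denote the strict transform of $Z_r$ along $\pi$. Since the center of $\pi$ does not contain $Z_r$, the map $Z_r^{(r-1)}\to Z_r$ is birational, so $Z_r^{(r-1)}$ is integral with the same normalization $\tilde{Z}_r$. Applying the base case to $\mf{X}^{(r-1)}$ and $Z_r^{(r-1)}$ produces $\mf{X}'\to\mf{X}^{(r-1)}$; the composition $\mf{X}'\to\mf{X}$ is then an admissible blowup whose center, viewed in $\mf{X}_k$, is contained in $\bigcup_i E_i$, where each $E_i$ is the image in $Z_i$ of the corresponding step's center. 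Since at step $i$ the center is a proper closed subset of $Z_i^{(i-1)}$, and $Z_i^{(i-1)}\to Z_i$ is birational, a dimension count gives $E_i\subsetneq Z_i$; by the non-inclusion hypothesis, no $Z_k$ is contained in $\bigcup_i E_i$. For each $i<r$, the strict transform of $Z_i$ in $\mf{X}'$ is a further blowup of $\tilde{Z}_i$, hence still birational to $Z_i$ and factoring through the normalization. The main obstacle is the careful bookkeeping of strict transforms and centers through the successive blowups; the non-inclusion hypothesis $Z_i\not\subseteq Z_j$ for $i\neq j$ is precisely what ensures the center condition is preserved at every stage.
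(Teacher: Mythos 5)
The central gap is in your lifting step. You take $\mc{K}_1\subseteq\cO_{\mf{X}}$ to be the full preimage of $\mc{I}_1^{(0)}$ under $\cO_{\mf{X}}\twoheadrightarrow\cO_{Z_1}$ and assert it is of finite type ``since the ideal of $Z_1$ in $\mf{X}$ is, as $\mf{X}_k$ is locally Noetherian.'' This fails unless $K$ is discretely valued: the ideal of $Z_1$ in $\cO_{\mf{X}}$ contains the ideal of $\mf{X}_k$ in $\mf{X}$, which is $\mf{m}_K\cdot\cO_{\mf{X}}$, and $\mf{m}_K$ is finitely generated only when the rank-one valuation is discrete (already for $\mf{X}=\Spf(\cO_K\langle x\rangle)$ with $K=\mathbf{C}_p$ the ideal $\mf{m}_K\cO_{\mf{X}}$ is not of finite type). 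So your $\mc{K}_1$ is not an admissible ideal and cannot be blown up. Producing a finite-type open ideal $\mc{J}\subseteq\cO_{\mf{X}}$ with $\mc{J}\cdot\cO_{\mf{X}_k}$ equal to a prescribed coherent ideal is precisely the content of the paper's Lemma~\ref{lem:lifting-ideal-sheaf}, whose proof is a genuine gluing argument: one lifts generators locally and adds a pseudouniformizer, and the local lifts agree on overlaps only after enlarging by a further pseudouniformizer. Your proof has no substitute for this step.

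Two further points. First, the ``fact'' driving your argument --- that a finite birational morphism onto an integral Noetherian scheme is a blowup --- is true affine-locally (for $A\subseteq B\subseteq\operatorname{Frac}(A)$ with $B$ finite, blow up the ideal $dB\subseteq A$ for any nonzero $d$ with $dB\subseteq A$), but the global statement for a possibly non-quasi-projective $Z_i$ is not standard and you give no argument. The paper needs only the weaker, genuinely standard fact that the normalization, being proper and an isomorphism over the normal locus $U$, is \emph{dominated} by a $U$-admissible blowup (Raynaud--Gruson, \stacks{081T}); domination suffices since the lemma only asks that the strict transform factor through the normalization. Second, your induction composes $r$ admissible blowups and must then identify the center of the resulting single blowup of $\mf{X}$; with the standard construction of the composite ideal this can be made to work, but it is exactly the bookkeeping you defer, whereas the paper avoids composition altogether by lifting one ideal $\mc{J}_i$ per component and blowing up the product $\prod_i\mc{J}_i$ once, so that the center is visibly $\bigcup_i V(\mc{J}_i)$, a union of proper closed subsets of the $Z_i$ missing every generic point.
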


Birationality of $Z'_i\to Z_i$ means that the blowup $\mf{X}'\to\mf{X}$ is an isomorphism in a neighborhood of the generic point of $Z_i$. 

\begin{proof}[Proof of Lemma \ref{lemma:blowup}]
For every $i$, there is a non-zero quasi-coherent ideal sheaf $J_i\subseteq \cO_{Z_i}$ whose blowup $Z''_i\to Z_i$ factors through the normalization of $Z_i$. Indeed, if $U\subseteq Z_i$ is the normal locus \cite[7.8.3(iv)]{EGAIV_2} then the normalization is a proper map which is an isomorphism over $U$, and hence is dominated by a $U$-admissible blowup (see \stacks{080K}) by \cite[I 5.7.12]{RaynaudGruson} (see also \stacks{081T}). We also denote by $J_i\subseteq \h_{\mf{X}_k}$ the induced quasi-coherent ideal sheaf. By Lemma \ref{lem:lifting-ideal-sheaf} below there exists admissible ideal sheaves $\mc{J}_i\subseteq \cO_\mf{X}$ with $\mc{J}_i\cdot \cO_{\mf{X}_k} = J_i$. If $\mf{X}'_i\to\mf{X}$ is the admissible blowup of $\mc{J}_i$, then $Z''_i$ is the strict transform of $Z_i$ in $\mf{X}'_i$.

Let $\mc{J} = \prod_{i=1}^r \mc{J}_i$ which is again an admissible ideal sheaf by \cite[Chapter I, Proposition 3.7.9]{FujiwaraKato}, and let $\mf{X}'\to \mf{X}$ be the corresponding admissible blowup. Since the generic point of $Z_i$ does not belong to any other $Z_j$, the map $\mf{X}'\to\mf{X}$ is an isomorphism generically on $Z_i$, and in particular the blowup center $V(\mc{J})$ does not contain any $Z_i$. If $Z'_i$ is the strict transform of $Z_i$ in $\mf{X}'$, then $Z'_i\to Z_i$ factors through $Z''_i\to Z_i$ and hence through the normalization of $Z_i$.
\end{proof}

\begin{lem} \label{lem:lifting-ideal-sheaf}
    Let $\mf{X}$ be a formal scheme of finite type over $\cO_K$ and let $\mc{J}\subseteq \cO_{\mf{X}_k}$ be a coherent ideal sheaf. Then there exists an admissible ideal sheaf $\mc{I}\subseteq \cO_{\mf{X}}$ such that $\mc{I}\cdot \cO_{\mf{X}_k} = \mc{J}$.
\end{lem}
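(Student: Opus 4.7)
The plan is to construct $\mc{I}$ as the preimage of $\mc{J}$ under the surjection $\cO_\mf{X}\twoheadrightarrow \cO_{\mf{X}_k}$; explicitly, set
\[ \mc{I} := \ker\bigl(\cO_\mf{X} \longrightarrow \cO_{\mf{X}_k}/\mc{J}\bigr). \]
By construction $\mc{I}$ is an ideal sheaf of $\cO_\mf{X}$ whose image in $\cO_{\mf{X}_k}$ equals $\mc{J}$, so $\mc{I}\cdot \cO_{\mf{X}_k}=\mc{J}$ holds tautologically. The problem is therefore reduced to verifying that $\mc{I}$ is admissible, i.e., adically quasi-coherent, of finite type, and open.

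For openness I would observe that, since $\mf{X}$ is adic of finite type over $\h_K$, the ideal $\varpi\cO_\mf{X}$ is an ideal of definition of $\cO_\mf{X}$, and it sits inside $\mc{I}$ because $\ker(\cO_\mf{X}\to\cO_{\mf{X}_k})=\varpi\cO_\mf{X}$. For the remaining two properties I would argue locally on an affine open $\mf{U}=\Spf(A)$ with $A$ topologically of finite type over $\h_K$; then $A/\varpi A$ is a finitely generated $k$-algebra, in particular Noetherian, and the coherent ideal $\mc{J}|_{\mf{U}_k}$ corresponds to a finitely generated ideal $\ov J=(\ov f_1,\ldots,\ov f_n)\subseteq A/\varpi A$. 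Lifting the $\ov f_i$ to elements $f_i\in A$, the finitely generated ideal
\[ J := (f_1,\ldots,f_n,\varpi)\subseteq A \]
is exactly the full preimage of $\ov J$ under $A\twoheadrightarrow A/\varpi A$. Since $J$ is finitely generated in the adic ring $A$, its associated sheaf $J^\Delta$ on $\Spf(A)$ is adically quasi-coherent of finite type (cf.\@ \cite[Chapter I, \S3]{FujiwaraKato}), and a direct computation on basic opens identifies $J^\Delta$ with $\mc{I}|_\mf{U}$, both being the preimage of $\ov J^\sim$ along the reduction map. Since adic quasi-coherence and finite generation are local conditions, this shows that $\mc{I}$ is admissible.

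I do not expect any serious obstacle here: the essential observation is that adjoining the single element $\varpi$ to arbitrary lifts of generators of $\ov J$ simultaneously produces a finitely generated lift of the preimage and forces openness. Quasi-compactness of $\mf{X}$ plays no role beyond what is already absorbed into the hypothesis ``of finite type,'' and the proof never needs to globalize choices of lifts because the global object $\mc{I}$ is defined intrinsically from $\mc{J}$.
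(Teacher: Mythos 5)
There is a genuine gap, and it is located exactly at the step you declare to be harmless. In this paper $K$ is a general non-archimedean field of rank one, so $\cO_K$ need not be a DVR and $\mf{m}_K$ need not be principal. The special fiber is $\mf{X}_k=\mf{X}\otimes_{\cO_K}k$ with $k=\cO_K/\mf{m}_K$, so on an affine piece $\Spf(A)$ the kernel of $A\to A_k$ is $\mf{m}_K A$, \emph{not} $\varpi A$ (and $A/\varpi A$ is a finitely generated $\cO_K/\varpi$-algebra, not a $k$-algebra). Consequently your global, ``intrinsic'' ideal $\mc{I}=\ker(\cO_\mf{X}\to\cO_{\mf{X}_k}/\mc{J})$ contains $\mf{m}_K\cO_\mf{X}$ and is in general \emph{not of finite type}, hence not admissible: already for $\mc{J}=0$ it equals $\mf{m}_K\cO_\mf{X}$, which is not finitely generated when the valuation is not discrete. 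For the same reason your local identification fails: the full preimage of $\ov J$ in $A$ is $(f_1,\ldots,f_n)+\mf{m}_K A$, which strictly contains $(f_1,\ldots,f_n,\varpi)$. The finitely generated ideal $(f_1,\ldots,f_n,\varpi)$ \emph{is} admissible and does map onto $\ov J$, but it depends on the chosen lifts $f_i$ and on $\varpi$, so these local ideals do not automatically glue — which is precisely the point your last paragraph dismisses. The lemma is invoked in the proof of Theorem~\ref{thm:specialization}, which is stated for arbitrary non-archimedean $K$, so the non-discrete case cannot be ignored.

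Your argument is correct, and essentially coincides with the paper's first paragraph, in the special case where $K$ is discretely valued with uniformizer $\varpi$ (then $\mf{m}_K=(\varpi)$ and the full preimage is $(f_1,\ldots,f_n,\varpi)$). To repair it in general you need the additional gluing step the paper carries out: show that any two admissible local lifts $I'=(a'_1,\ldots,a'_r,\varpi)$ and $I''=(b'_1,\ldots,b'_s,\varpi)$ of the same $\ov J$ satisfy $I'+(\omega)=I''+(\omega)$ for a suitable pseudouniformizer $\omega$ (by expressing each generator of one in terms of the other modulo elements of $\mf{m}_K$), then take a finite affine cover of $\mf{X}$ together with finite affine covers of the pairwise intersections, and enlarge all the local ideals by a single common pseudouniformizer so that they agree on overlaps. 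Quasi-compactness of $\mf{X}$ is therefore essential (to get a single $\omega$ working for all finitely many overlaps), contrary to your closing remark.
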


\begin{proof}
We start the proof with an observation. Suppose that $\mf{X}=\Spf(A)$ and let $J=(a_1, \ldots, a_r)\subseteq A_k$ be the ideal corresponding to $\mc{J}$. If $a'_i\in A$ are liftings of the $a_i$ and $I' = (a'_1, \ldots, a'_r, \varpi) \subseteq A$, then $I'$ is an admissible ideal and $I'A_k=J$, so in particular the result holds if $\mf{X}$ is affine. Moreover, if $I'=(a'_1, \ldots, a'_r, \varpi)$ and $I''=(b'_1, \ldots, b'_s,\varpi)$ are two admissible ideals of $A$ with $I' A_k = J = I''A_k$, then we can write $b'_i = \sum_j \alpha_{ij} a'_j + \varpi_i \gamma_i$ with $\alpha_{ij}, \gamma_i\in A$ and $\varpi_i\in \mf{m}_K$. It follows that if $(\varpi,\varpi_1, \ldots, \varpi_s)\cO_K = (\varpi')$ then $I''\subseteq I'+(\varpi')$. Swapping the roles of $I'$ and $I''$, we get $I'\subseteq I''+(\varpi'')$ for some pseudouniformizer $\varpi''$. Taking $(\omega)=(\varpi', \varpi'')$, we see that $I'+(\omega) = I''+(\omega)$ for a suitably chosen pseudouniformizer $\omega$. 

For the general case, cover $\mf{X}$ by finitely many affine opens $\mf{U}_i=\Spf(A_i)$ and cover the pairwise intersections $\mf{U}_i\cap \mf{U}_{i'}$ by finitely many affine opens $\mf{U}_{ii'j}=\Spf(A_{ii'j})$. By the previous paragraph, we can find admissible ideals $I'_i\subseteq A_i$ such that $I'_i(A_i)_k$ is the ideal corresponding to $\mc{J}|_{(\mf{U}_i)_k}$. Moreover, for each triple $(i,i',j)$ there exists a pseudouniformizer $\omega_{ii'j}$ such that $I'_i+(\omega_{ii'j})$ and $I'_{i'}+(\omega_{ii'j})$ generate the same ideal in $A_{ii'j}$. If $(\omega)=(\omega_{ii'j})_{(i,i',j)}$, then the admissible ideals $I_i = I'_i + (\omega) \subseteq A_i$ glue to give the desired admissible ideal $\mc{I}$ sheaf.  
\end{proof}

Next, we show that we may iteratively apply Lemma~\ref{lemma:blowup} to produce an admissible blowup of $\mf{X}'$ of $\mf{X}$ whose irreducible components factor through the normalizations of their images in $\mf{X}$.

\begin{prop} \label{prop:nice-blowup}
	Let $\mf{X}$ be a quasi-compact admissible formal scheme over $\cO_K$. Then there exists an admissible blowup $\mf{X}'\to \mf{X}$ such that for every irreducible component $Z'$ of $\mf{X}'_k$ with image $Z\subseteq \mf{X}_k$, the map $Z'\to Z$ factors through the normalization of $Z$. 
\end{prop}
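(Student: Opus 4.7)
The plan is to proceed by Noetherian induction on closed subsets $T\subseteq\mf{X}_k$, proving the following strengthening: for every quasi-compact admissible formal scheme $\mf{Y}$ with a morphism $\mf{Y}\to\mf{X}$, there exists an admissible blowup $\mf{Y}'\to\mf{Y}$ such that (i) every irreducible component of $\mf{Y}'_k$ whose image in $\mf{X}_k$ lies in $T$ maps through the normalization of its image, and (ii) the center of $\mf{Y}'\to\mf{Y}$ has image in $\mf{X}_k$ contained in $T$. The proposition is then the case $\mf{Y}=\mf{X}$, $T=\mf{X}_k$; the base case $T=\emptyset$ is trivial.

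For the inductive step, fix a top-dimensional irreducible component $Z$ of $T$ and apply Lemma~\ref{lemma:blowup} to $\mf{X}$ with $Z_1=Z$, obtaining an admissible blowup $\mf{X}^*\to\mf{X}$ whose center is a proper closed subset of $Z$ and whose strict transform of $Z$ factors through $n(Z)$. Pulling back to $\mf{Y}$ by forming the strict transform of $\mf{Y}$ in $\mf{Y}\times_{\mf{X}}\mf{X}^*$ (itself an admissible blowup of $\mf{Y}$ along the pullback ideal, as recorded in the preliminaries on strict transforms) yields an admissible blowup $\mf{Y}^{(1)}\to\mf{Y}$ whose center lies over a subset of $T$. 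Any component of $\mf{Y}^{(1)}_k$ whose image in $\mf{X}_k$ equals $Z$ is necessarily the strict transform of a component of $\mf{Y}_k$ with image $Z$ (the new exceptional components lie over the center of $\mf{X}^*\to\mf{X}$, which is strictly contained in $Z$), and by the universal property of strict transforms it factors through the strict transform of $Z$ in $\mf{X}^*$, hence through $n(Z)$.

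Let $T'\subseteq T$ be the finite union of images in $\mf{X}_k$ of the remaining bad components of $\mf{Y}^{(1)}_k$ with image in $T$. Each such image is an irreducible closed subset of $T$ distinct from the top-dimensional $Z$, so it does not contain $Z$; therefore $Z\not\subseteq T'$, giving $T'\subsetneq T$. Applying the induction hypothesis to $\mf{Y}^{(1)}\to\mf{X}$ with $T'$ yields $\mf{Y}^{(2)}\to\mf{Y}^{(1)}$, and the composition $\mf{Y}':=\mf{Y}^{(2)}\to\mf{Y}$ is again an admissible blowup (a standard fact in Fujiwara--Kato's theory). To verify (i) and (ii) for $\mf{Y}'$: (ii) is immediate since both centers project into $T$; for (i), components with image $Z$ are strict transforms of good components of $\mf{Y}^{(1)}_k$ and remain good; components with image in $T'$ are handled by the second step; any remaining component with image $V\subseteq T$ distinct from $Z$ and not contained in $T'$ must be the strict transform of a component of $\mf{Y}^{(1)}_k$ that was already good, since otherwise $V$ would lie in $T'$; and exceptional components from the second step have image in $T'\subseteq T$ by its condition (ii). The main subtlety is the strengthened induction hypothesis, particularly condition (ii) tracking where the blowup center projects: without such control, iterating blowups can produce new bad components whose images escape the locus being handled, breaking the Noetherian descent.
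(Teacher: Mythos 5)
Your proof is correct, and it rests on exactly the same two ingredients as the paper's: Lemma~\ref{lemma:blowup} applied to integral closed subschemes of $\mf{X}_k$, and the functoriality of strict transforms (a bad component over $Z$ is the strict transform $\op{Bl}_{\mc{J}\cdot\cO_W}(W)$, which maps to $\op{Bl}_{\mc{J}\cdot\cO_Z}(Z)$ and hence through $n(Z)$, while the new exceptional components lie over the center, which is a proper closed subset of $Z$). Where you genuinely diverge is in the organization of the induction: the paper inducts on the codimension in $\mf{X}_k$ of the image of a component, treats all images of a given codimension simultaneously, and keeps every stage a single admissible blowup of the fixed $\mf{X}$ by multiplying ideal sheaves (so that termination is guaranteed by $\op{codim}\leqslant\dim\mf{X}_k$ and no auxiliary source is needed); you instead run a Noetherian induction on the closed ``bad locus'' $T\subseteq\mf{X}_k$, peel off one component of $T$ at a time, and recurse through a changing source $\mf{Y}$, which forces the strengthened hypothesis (ii) controlling where the center lives and the use of the fact that a composite of admissible blowups is again one. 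Both bookkeeping schemes work; yours trades the codimension count for the composite-of-blowups lemma. Two small points you should tighten: the auxiliary $\mf{Y}\to\mf{X}$ must be restricted to proper morphisms (in practice, composites of admissible blowups) so that images of components of $\mf{Y}'_k$ in $\mf{X}_k$ are closed and irreducible and ``the normalization of its image'' makes sense; and since the center of an admissible blowup is not unique, condition (ii) is better phrased as ``$\mf{Y}'\to\mf{Y}$ is an isomorphism over the complement of a closed subset whose image in $\mf{X}_k$ lies in $T$,'' which is what your case analysis actually uses and which is preserved under composition.
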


\begin{proof}
We prove by induction on $n\geqslant 0$ the following statement: there exists an admissible blowup $\mf{X}'_n\to \mf{X}$ such that the assertion of the proposition holds for all $Z'$ such that $\op{codim} Z\leqslant n$. Here codimension means the Krull dimension of the local ring $\cO_{\mf{X}_k, \eta_Z}$ where $\eta_Z$ is the generic point of $Z$. This claim for $n=\dim \mf{X}_k$ will then imply the required assertion. 

Starting with $n=0$, let $Z_1, \ldots, Z_r$ be the irreducible components of $\mf{X}_k$. Clearly $Z_i$ is not contained in $Z_j$ for $i\neq j$. Let $\mf{X}'_0\to \mf{X}$ be the admissible blowup given by Lemma~\ref{lemma:blowup}. This blowup is birational at the generic points of the $Z_i$, and hence every irreducible component which is not the strict transform of some $Z_i$ has image of codimension at least one. The claim is thus proved for $n=0$.

For the induction step from $n$ to $n+1$, suppose that we have constructed $\mf{X}'_n\to \mf{X}$ as above. Let $Z'_1, \ldots, Z'_r$ be the irreducible components of $(\mf{X}'_{n})_k$ whose images $Z_1, \ldots, Z_r$ in $\mf{X}_k$ have codimension exactly $n+1$. Again, $Z_i$ does not properly contain another $Z_j$ (we could have $Z_i=Z_j$ for some $i\neq j$). Apply Lemma~\ref{lemma:blowup} to obtain an admissible blowup $\mf{X}''_n\to \mf{X}$ relative to an admissible ideal sheaf $\mc{J}$ with $\op{codim} V(J)>n+1$ where $J=\mc{J}\cdot\cO_{\mf{X}_k}$. If $\mf{X}'_n$ is the blowup relative to an admissible ideal sheaf $\mc{I}$ and $\mf{X}''_n$, let us take $\mf{X}'_{n+1}\to \mf{X}$ to be the blowup relative to $\mc{I}\cdot \mc{J}$, which is also the blowup of $\mf{X}'_n$ relative to $\mc{J}\cdot\cO_{\mf{X}'}$. 

We check that $\mf{X}'_{n+1}\to\mf{X}$ satisfies the assertion for codimension $n+1$. Let $Z''$ be an irreducible component of $(\mf{X}'_{n+1})_k$, and let $Z'$ and $Z$ be its images in $(\mf{X}'_n)_k$ and $\mf{X}_k$, respectively. There are four cases to consider:
\begin{enumerate}[(1)]
    \item $Z'$ is an irreducible component of $(\mf{X}'_n)_k$ and
        \begin{enumerate}[(a)]
            \item $\op{codim} Z\leqslant n$, 
            \item $\op{codim} Z = n+1$, 
            \item $\op{codim} Z > n+1$, or
        \end{enumerate}
    \item $Z'$ is not an irreducible component of $(\mf{X}'_n)_k$.
\end{enumerate}
In case (1a) we are done by the assumed property of $\mf{X}'_n$, and in case (1c) there is nothing to show. In case (1b),  we have $Z'=Z'_i$ for some $i$. Since the generic point of $Z=Z_i$ is not in $V(J)$, the generic point of $Z'$ is not in $V(J\cdot \cO_{(\mf{X}'_n)_k})$. Since $\mf{X}'_{n+1}\to \mf{X}'_n$ is the blowup at $\mc{J}\cdot \cO_{\mf{X}'_n}$, it is an isomorphism in a neighborhood of the generic point of $Z'$, and we conclude that $Z''$ is the strict transform of $Z'$ in $\mf{X}'_{n+1}$. That is, $Z'' = \op{Bl}_{\mc{J}\cdot \cO_{Z'}}(Z')$, and so by functoriality it maps to the strict transform $\op{Bl}_{\mc{J}\cdot \cO_Z}(Z)$ of $Z$ in $\mf{X}''_n$. By construction, the latter factors through the normalization of $Z$, and we are done. See the diagram below.
\[
    \xymatrix{
        \mf{X}'_{n+1} \ar[rr]^{\mc{J}\cdot\cO_{\mf{X}'_n}} \ar[d]_{\mc{I}\cdot\cO_{\mf{X}''_n}} & & \mf{X}'_n \ar[d]^{\mc{I}} \\
        \mf{X}''_n \ar[rr]_{\mc{J}} & & \mf{X}  
    }
    \qquad
    \xymatrix{
        Z'' = \op{Bl}_{\mc{J}\cdot \cO_{Z'}}(Z') \ar[rr] \ar[d] & &  Z' = Z'_i \ar[d] \\
        \op{Bl}_{\mc{J}\cdot \cO_Z}(Z) \ar[r] & n(Z) \ar[r] & Z=Z_i 
    }
\]

Finally, in case (2), $Z''$ is contained in the exceptional locus of $\mf{X}'_{n+1}\to \mf{X}'_n$, which is contained in the preimage of $V(\mc{J})$ in $\mf{X}'_n$. Thus $Z\subseteq V(J)$, and hence $\op{codim} Z\geqslant \op{codim} V(J)>n+1$.
\end{proof}

We are now ready to put the pieces together and present the proof of Theorem \ref{thm:specialization}.

\begin{proof}[Proof of Theorem~\ref{thm:specialization}]
We first reduce to the case $\mf{X}$ is of finite type. Since $\mf{X}$ is quasi-paracompact, its rigid generic fiber $X=\mf{X}_\eta$ is quasi-paracompact and quasi-separated, and hence taut by \cite[Chapter 0, Proposition 2.5.15]{FujiwaraKato}.  Therefore, for each point $x\in X$ there exists a quasi-compact open $U_x\in X$ which is an overconvergent neighborhood of $x$, i.e.\@, it contains an overconvergent open containing $x$ (see \cite[Proposition 1.4.5]{ALY1}). It then suffices to show that for any geometric covering $Y\to \mf{X}_k$ that $u(Y)$ is a de Jong covering space when restricted to $U_x$. Since $U_x$ is quasi-compact, by (the proof of) \cite[Lemma~8.4/5]{BoschLectures} there exists an admissible blowup $\mf{X}'\to \mf{X}$ such that $U_x=\mf{U}'_\eta$ for an open formal subscheme $\mf{U}'\subseteq\mf{X}'$. If the result holds for $\mf{U}'$ (which is of finite type) and the base change $Y' = Y\times_{\mf{X}_k} \mf{U}'_k$ of the given geometric covering $Y\to \mf{X}_k$, we deduce that $\mf{Y}'_\eta\to \mf{U}'_\eta = U_x$ is a de Jong covering, where $\mf{Y}'\to \mf{U}'$ is the unique \'etale lifting of $Y'\to \mf{U}'_k$. But $\mf{Y}'_\eta = u(Y)\times_X U_x$, and we conclude.

Suppose now that $\mf{X}$ is of finite type. Blowing up an ideal sheaf of definition, we may assume that $\mf{X}$ is admissible. Proposition~\ref{prop:nice-blowup} yields an admissible blowup $f\colon \mf{X}'\to \mf{X}$ such that for every irreducible component $Z'$ of $\mf{X}'_k$ the induced map $Z'\to \mf{X}'_k\to \mf{X}_k$ factors through a normal scheme $Z$ of finite type over $k$. By Lemma~\ref{lem:geom-cover-geom-unib} we know that $Y_{Z'}$ is the disjoint union of finite \'etale coverings of $Z'$.  Since $f_\eta$ is an isomorphism, we may identify $X'=\mf{X}_{\eta}$ with $X$ and we have a commutative diagram
\[ 
	\xymatrix{
		\mathbf{Et}_{X'} & \mathbf{Et}_{\mf{X}'_k} \ar[l]_{u'}  \\
		\mathbf{Et}_{X}  \ar@{=}[u] & \mathbf{Et}_{\mf{X}_k}\ar[l]_u \ar[u]_{f^*}. 
	}
\]
By Proposition~\ref{prop:spec-good-case}, the functor $u'$ maps the image of $f^*\colon \Cov_{\mf{X}_k}\to \Cov_{\mf{X}'_k}$ into $\Cov^\oc_{X'}=\Cov^\oc_X$, and hence $u$ sends $\Cov_{\mf{X}_k}$ into $\Cov^\oc_X$. 
\end{proof} 

\begin{rem} 
The authors suspect that the above results hold in a greater generality. Namely, let $\mf{X}$ be a universally rigid-Noetherian formal scheme (in the sense of \cite[Chapter I, Definition 2.1.7]{FujiwaraKato}) such that $\underline{\mf{X}}$ is topologically Noetherian and universally Japanese (see \stacks{032R}). There is then still a functor $\Et_{\underline{\mf{X}}}$ to $\Et_{\mf{X}_\mathrm{rig}}$ (where $\mf{X}_\mathrm{rig}$ is as in \cite[Chapter II]{FujiwaraKato}), and the same methods should show that $\Cov_{\underline{\mf{X}}}$ is mapped into $\Cov^\oc_{\mf{X}_\mathrm{rig}}$. However, an interpretation of such a result in terms of fundamental groups is clearly missing, since we do not know if for $\mf{X}_{\rm rig}$ connected the pair $(\Cov^\oc_{\mf{X}_\mathrm{rig}}, F_{\ov x})$ is a tame infinite Galois category in this generality.
\end{rem}

\section{Tame coverings}
\label{s:tame}

In this section, we assume that the base non-archimedean field $K$ is discretely valued and we fix a uniformizer $\varpi$. We aim to show that, under certain smoothness hypotheses, the notion of tame (see Definition~\ref{tame def}) $\et$-covering spaces, $\adm$-covering spaces, and de Jong coverings of a rigid $K$-space all coincide (Theorem~\ref{thm:tame-comparison}). The method of proof mirrors that of Theorem~\ref{thm:specialization}. Namely we formulate a simple topological lemma (see Proposition \ref{prop:oc-criterion}) which will allow us to apply the overconvergence criterion of Proposition~\ref{prop:spec-good-case}. We then employ the link between tameness and logarithmic geometry (the logarithmic Abhyankar's lemma, Theorem~\ref{thm:log-abhy}) to verify that our topological lemma applies to suitable models of tame covering spaces.

\subsection{Preliminaries about finite maps and formal models}
\label{ss:tame-prelim}

The following lemma shows that, like in algebraic geometry (see \stacks{0BAK}), given a formal scheme $\mf{X}$ and a finite morphism of rigid $K$-spaces $Y\to\mf{X}_\eta$, one can `normalize $\mf{X}$ in $Y$,' producing a formal model $\mf{Y}\to\mf{X}$ which is again a finite morphism. There is a unique such map with the property that $\mf{Y}$ is $\eta$-normal (see Appendix~\ref{app:eta-normal} for the definition and basic properties of $\eta$-normality).

\begin{lem} \label{lem:normalization}
	Let $\mf{X}$ be an admissible formal scheme over $\cO_K$, let $X=\mf{X}_\eta$, and let $f\colon Y\to X$ be a finite morphism with $Y$ reduced. Then there exists a unique up to unique isomorphism finite morphism of formal schemes $\mf{f}\colon \mf{Y}\to\mf{X}$ with $f=\mf{f}_\eta$ and $\mf{Y}$ $\eta$-normal.
\end{lem}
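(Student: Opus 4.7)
The plan is to construct $\mf{Y}$ affine-locally as the relative integral closure of $\mf{X}$ in $Y$, prove uniqueness via the $\eta$-normality hypothesis, and then glue.

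\smallskip

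\textbf{Step 1 (affine reduction).} First I would reduce to the case $\mf{X} = \Spf(A)$ with $A$ an admissible topologically finite type $\cO_K$-algebra. Since $K$ is discretely valued, $A$ is Noetherian, and since $\mf{X}$ is admissible, $A$ is $\varpi$-torsion free. Writing $X = \Spa(A_K)$ with $A_K = A[\tfrac{1}{\varpi}]$, the finite reduced morphism $Y \to X$ corresponds to a finite reduced $A_K$-algebra $B_K$. Let $B \subseteq B_K$ be the integral closure of $A$ in $B_K$.

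\smallskip

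\textbf{Step 2 (finiteness of $B$ over $A$).} This is the main technical point. The key input is that a topologically finite type $\cO_K$-algebra over a complete discretely valued ring is excellent (Valabrega), in particular universally Japanese. Hence the integral closure $B$ of $A$ in the finite reduced $A_K$-algebra $B_K$ is finite over $A$. Equipped with the $\varpi$-adic topology (which equals the topology induced from $A$ by finiteness), $B$ is $\varpi$-adically complete since $A$ is. Thus $\Spf(B) \to \Spf(A)$ is a well-defined finite adic morphism of admissible formal schemes over $\cO_K$.

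\smallskip

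\textbf{Step 3 (generic fiber and $\eta$-normality).} The identity $B[\tfrac{1}{\varpi}] = B_K$ and Proposition~\ref{prop:generic-fiber-props} give $\Spf(B)_\eta = \Spa(B_K) = Y$. By construction $B$ is integrally closed in $B_K = B[\tfrac{1}{\varpi}]$ (integral closure is transitive), which is precisely the condition that $\Spf(B)$ be $\eta$-normal in the sense of Appendix~\ref{app:eta-normal}.

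\smallskip

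\textbf{Step 4 (uniqueness).} Suppose $\mf{f}' \colon \mf{Y}' \to \mf{X}$ is another finite morphism from an $\eta$-normal formal scheme with $\mf{f}'_\eta = f$. Affine-locally write $\mf{Y}' = \Spf(B')$ with $B'$ finite over $A$ and $B'[\tfrac{1}{\varpi}] = B_K$. Since $B'$ is finite over $A$, every element of $B'$ is integral over $A$, so $B' \subseteq B$. Conversely, every element of $B$ is integral over $A$, hence over $B'$, and since $\mf{Y}'$ is $\eta$-normal, $B'$ is integrally closed in $B_K$; therefore $B \subseteq B'$. Thus $B' = B$, which gives a canonical isomorphism.

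\smallskip

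\textbf{Step 5 (globalization).} For an affine open cover $\{\Spf(A_i)\}$ of $\mf{X}$, apply Steps 1--3 to get finite $\eta$-normal models $\Spf(B_i)\to \Spf(A_i)$ of $Y_{\Spa((A_i)_K)}\to \Spa((A_i)_K)$. The uniqueness of Step~4, applied over the intersections (which can be covered by further principal open formal subschemes) yields canonical gluing data, producing the desired global $\mf{Y}\to\mf{X}$. The uniqueness statement of the lemma then follows from Step~4 applied affine-locally.

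\smallskip

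The main obstacle is Step~2: ensuring that the relative integral closure is finite. This is where the discretely valued hypothesis on $K$ and the resulting excellence of topologically finite type $\cO_K$-algebras is indispensable; without it, one would lose the finiteness of integral closure and the construction would break down.
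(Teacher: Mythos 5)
Your construction is correct and is essentially the paper's: affine--locally over $\Spf(A)$ one takes the subring of $B_K$ consisting of elements integral over $A$ (equivalently, by \cite[Chapter II, Corollary A.4.10]{FujiwaraKato}, the ring $B_K^\circ$ of power-bounded elements), checks it is a finite $A$-algebra, and identifies $\eta$-normality with being integrally closed in $B_K$. The one substantive difference is the finiteness input in your Step~2: the paper invokes \cite[Corollary~6.4.1/6]{BGR} (finiteness of $B_K^\circ$ over $A$ for affinoid algebras over a discretely valued field), whereas you invoke excellence of $\cO_K\langle x_1,\ldots,x_n\rangle$ (Valabrega) and the Nagata property; both are valid here, but note that the paper's choice is what allows the remark following Corollary~\ref{cor:normalization} extending the lemma to stable fields with divisible value group, where $\cO_K$ is non-Noetherian and the excellence route is unavailable. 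A second, cosmetic difference: the paper packages uniqueness and gluing at once by exhibiting $\mf{Y}$ as the relative formal spectrum of $\op{sp}_{\mf{X},*}(f_*\cO_Y^+)$, which forces it to verify adic quasi-coherence, i.e.\ that $B^\circ\otimes_A A\langle a^{-1}\rangle=(B_K\langle a^{-1}\rangle)^\circ$; your affine-local two-sided-inclusion uniqueness argument plus gluing needs exactly the same compatibility (that the restriction of $\Spf(B)$ to a principal open of $\Spf(A)$ is again $\eta$-normal), which you are implicitly drawing from the paper's remark that $\eta$-normality can be checked on a single affine cover --- it would be worth making that dependence explicit.
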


\begin{proof}
Suppose that $\mf{f}\colon \mf{Y}\to \mf{X}$ is a finite map with $\mf{f}_\eta = f$ and $\mf{Y}$ $\eta$-normal. On the one hand, $\op{sp}_{\mf{Y},*}\cO_Y^+ = \cO_\mf{Y}$ because $\mf{Y}$ is $\eta$-normal. On the other, $\mf{Y}$ is the relative formal spectrum \cite[Chapter I, Proposition 4.2.6]{FujiwaraKato} of $\mf{f}_* \cO_\mf{Y} = \mf{f}_* (\op{sp}_{\mf{Y}, *} \cO_Y^+) = \op{sp}_{\mf{X}, *} (f_*\cO^+_Y)$. This implies the uniqueness, and that to show existence we need to check that $\op{sp}_{\mf{X}, *} (f_*\cO^+_Y)$ is an adically quasi-coherent and finitely generated $\cO_\mf{X}$-algebra. For an affinoid open $\Spf(A)$ in $X$, write $Y\times_X \Spa(A_K) = \Spa(B)$ for a finite and reduced $A_K$-algebra $B$. The value of the sheaf in question on $\Spf(A)$ is $B^\circ$.  Choose a surjection $\cO_K\langle x_1, \ldots, x_n\rangle\to A$. Since $K$ is discretely valued, by \cite[Corollary~6.4.1/6]{BGR} applied to $K\langle x_1, \ldots, x_n\rangle\to A_K$, the algebra $A_K^\circ$ is finite over $A$, and by the same result applied to $A_K\to B$, the algebra $B^\circ$ is finite over $A^\circ$ and hence also over $A$. Moreover the sheaf $\op{sp}_{\mf{X},*} (f_* \h_Y^+)$ is adically quasi-coherent since for $a$ in $A$ the above discussion, and the sheaf property for $\h_Y^+$, shows that $B^\circ\otimes_A A\langle a^{-1}\rangle$ is precisely $B^\circ\langle a^{-1}\rangle^\circ$.
\end{proof}

The following corollary extends the above result to the case of maps which are locally on the target the disjoint union of finite maps.

\begin{cor} \label{cor:normalization}
	Let $\mf{X}$ be an admissible formal scheme over $\cO_K$, let $X=\mf{X}_\eta$, and let $f\colon Y\to X$ be a morphism with $Y$ reduced for which there exists an open cover $\mf{X} = \bigcup_{i\in I} \mf{U}_i$ such that $Y\times_X \mf{U}_{i,\eta}$ is a disjoint union of finite spaces over $\mf{U}_{i,\eta}$. Then there exists a unique up to unique isomorphism morphism of formal schemes $\mf{f}\colon \mf{Y}\to\mf{X}$ with $f=\mf{f}_\eta$ and $\mf{Y}$ $\eta$-normal which locally on $\mf{X}$ is the disjoint union of finite formal schemes over $\mf{X}$.
\end{cor}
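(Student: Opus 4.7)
My approach is to build $\mf{Y}$ by applying Lemma~\ref{lem:normalization} piece-by-piece on a suitable cover and then gluing. First, I would refine the cover $\{\mf{U}_i\}$ to an affine open cover $\{\Spf(A_\alpha)\}$ of $\mf{X}$ with each $\Spf(A_\alpha)$ contained in some $\mf{U}_i$. Base-changing the hypothesis, the pullback $Y\times_X \Spa(A_{\alpha,K})$ is a disjoint union of finite morphisms over $\Spa(A_{\alpha,K})$, say $\coprod_{j} Y_{\alpha,j}$. Applying Lemma~\ref{lem:normalization} to each finite piece $Y_{\alpha,j}$ yields a unique $\eta$-normal finite formal model $\mf{Y}_{\alpha,j}\to \Spf(A_\alpha)$, and I would define the local candidate $\mf{Y}_\alpha := \coprod_j \mf{Y}_{\alpha,j}$. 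This is $\eta$-normal, locally a disjoint union of finite formal schemes, and its generic fiber recovers $Y\times_X\Spa(A_{\alpha,K})$.

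The key step is to verify that the $\mf{Y}_\alpha$ glue consistently on overlaps. For a Zariski open immersion $\Spf(A')\hookrightarrow \Spf(A_\alpha)$, I would argue that the base change $\mf{Y}_\alpha\times_{\Spf(A_\alpha)}\Spf(A')$ is a disjoint union of finite morphisms over $\Spf(A')$ and still $\eta$-normal (invoking a base-change statement for $\eta$-normality under open immersions from Appendix~\ref{app:eta-normal}). Then I would use the uniqueness in Lemma~\ref{lem:normalization} applied componentwise: given two decompositions of the same rigid space into disjoint unions of finite morphisms, one can pass to a common refinement (each finite piece has only finitely many connected components, each of which lies inside a unique piece of any other decomposition), and Lemma~\ref{lem:normalization} applied to each refined piece shows that the two associated formal models are canonically isomorphic. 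This yields canonical descent data, and gluing produces a global $\mf{Y}$ over $\mf{X}$.

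The remaining verifications should be routine: $\mf{Y}$ is $\eta$-normal because $\eta$-normality is local on $\mf{X}$; the generic fiber of $\mf{f}\colon\mf{Y}\to\mf{X}$ is $f$ by construction on each $\Spf(A_\alpha)$; and uniqueness of $\mf{Y}$ globally follows from the componentwise uniqueness in Lemma~\ref{lem:normalization}. The main obstacle I anticipate is making the gluing genuinely canonical, which hinges on the good behavior of $\eta$-normality under open immersions and on the fact that the decomposition into finite pieces is essentially canonical once one descends to connected components. An alternative route, modeled directly on the proof of Lemma~\ref{lem:normalization}, would be to construct $\mf{Y}$ as the relative formal spectrum of $\op{sp}_{\mf{X},*}(f_*\cO_Y^+)$ and verify the claimed local structure intrinsically; but this amounts to showing that this sheaf locally decomposes as a direct product of adically quasi-coherent finite $\cO_{\mf{X}}$-algebras, which is essentially equivalent to the gluing argument sketched above.
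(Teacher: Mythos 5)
Your proposal is correct and takes essentially the same route as the paper: apply Lemma~\ref{lem:normalization} componentwise to a decomposition into finite pieces, observe that the resulting model is independent of the chosen decomposition and compatible with passing to open formal subschemes, and glue. The paper asserts these last two facts without elaboration; your common-refinement-by-connected-components argument and the local behavior of $B^\circ$ under complete localization (already established in the proof of Lemma~\ref{lem:normalization}) are precisely the details being elided.
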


\begin{proof}
Suppose first that $Y=\coprod_{j\in J} Y_j$ with each $Y_j$ finite over $X$. In this case, we can apply Lemma~\ref{lem:normalization} to each $Y_j$ and take $\mf{Y} = \coprod_{j\in J} \mf{Y}_j$, which clearly satisfies the required property. In particular, this construction does not depend on the way $Y$ is presented as a disjoint union of finite $X$-spaces, and is compatible with passing to an open formal subscheme of $\mf{X}$. This implies that in the general case, the formal schemes $\mf{V}_i\to \mf{U}_i$ constructed in the first step will glue to provide the desired map $\mf{Y}\to \mf{X}$.
\end{proof}

We call the model $\mf{Y}\to \mf{X}$ constructed in Corollary~\ref{cor:normalization} the \emph{normalization} of $\mf{X}$ in $Y$.

\begin{rem}
The assertions of Lemma~\ref{lem:normalization} and Corollary~\ref{cor:normalization} remain valid if $K$ is stable \cite[Definition~3.6.1/1]{BGR} and with divisible value group (e.g.\@ if $K$ is algebraically closed). Indeed, under these assumptions the conclusions of \cite[Corollary 6.4.1/6]{BGR} hold (see \cite[Corollary~6.4.1/5]{BGR}).
\end{rem}

The following lemma will help us construct formal models in \S\ref{ss:tame} below.

\begin{lem} \label{lem:covering}
	Let $X$ be a quasi-paracompact and quasi-separated rigid space over $K$, and let $X=\bigcup_{i\in I} U_i$ be an open cover. Then there exists an admissible formal model $\mf{X}$ of $X$ and an open cover $\mf{X} = \bigcup_{j\in J} \mf{V}_j$ such that $\{\mf{V}_{j,\eta}\}_{j\in J}$ refines $\{U_i\}_{i\in I}$.
\end{lem}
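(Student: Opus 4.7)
The strategy is to first use quasi-paracompactness to replace $\{U_i\}$ by a locally finite refinement by quasi-compact opens, and then to construct a formal model of $X$ realizing this refinement as a cover by open formal subschemes.

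By quasi-paracompactness, one may choose a quasi-compact open cover $X = \bigcup_\alpha W_\alpha$ with each $W_\alpha$ meeting only finitely many others. Since each $W_\alpha$ is quasi-compact, it admits a finite cover by quasi-compact opens each contained in some $U_i$; assembling these one obtains a locally finite quasi-compact open cover $\{V_j\}_{j\in J}$ of $X$ refining $\{U_i\}$. It thus suffices to produce an admissible formal model $\mf{X}$ of $X$ together with open formal subschemes $\mf{V}_j\subseteq\mf{X}$ with $\mf{V}_{j,\eta}=V_j$ for every $j\in J$.

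To build $\mf{X}$ I would glue local models. For each $j\in J$ set $V_j^+=\bigcup_{j'\,:\,V_{j'}\cap V_j\neq\emptyset}V_{j'}$, which is a finite union of quasi-compact opens and hence quasi-compact and quasi-separated. By Raynaud's equivalence \eqref{eq:Raynaud-equiv}, choose an admissible formal model $\mf{W}_j$ of $V_j^+$, and, by repeated application of \cite[Lemma~8.4/5]{BoschLectures} to the finitely many indices $j'$ with $V_{j'}\subseteq V_j^+$, assume after an admissible blowup that each such $V_{j'}$ is the generic fiber of an open formal subscheme of $\mf{W}_j$. Then for every pair $j,j'$ with $V_j\cap V_{j'}\neq\emptyset$, exploit the cofiltered nature of the poset of admissible formal models of the qcqs rigid space $V_j\cap V_{j'}$ \cite[Chapter II, Proposition~2.1.10]{FujiwaraKato} to find admissible blowups of $\mf{W}_j$ and $\mf{W}_{j'}$ on which the open formal subschemes modeling $V_j\cap V_{j'}$ become canonically isomorphic. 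Iterating this on pairs and triples, which involves only finitely many indices in any region thanks to local finiteness, produces compatible gluing data and hence a global admissible formal model $\mf{X}$ of $X$ with the required cover by $\mf{V}_j$.

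The main difficulty lies in carrying out this gluing coherently across all overlaps at once: adjusting $\mf{W}_j$ to agree with $\mf{W}_{j'}$ on one intersection might disturb an earlier agreement with some $\mf{W}_{j''}$. I would resolve this by combining the local finiteness of $\{V_j\}$ (which ensures that in any bounded region only finitely many compatibility constraints arise) with the cofilteredness of the poset of admissible blowups of each qcqs piece (which allows any finite collection of constraints to be met simultaneously by passing to a common refinement). With that combinatorial point addressed, the construction of $\mf{X}$ and the open cover $\{\mf{V}_j\}$ is a formal exercise.
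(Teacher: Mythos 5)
Your reduction step is exactly the paper's: refine $\{U_i\}$ to a cover of finite type by quasi-compact opens, using quasi-paracompactness to get a finite-type ambient cover and quasi-separatedness to keep the intersections quasi-compact. Where you diverge is in the second half. The paper at that point simply invokes \cite[Lemma~8.4/5]{BoschLectures}, whose content is precisely that a cover of finite type by quasi-compact opens of a quasi-paracompact, quasi-separated rigid space lifts to an open cover of some admissible formal model; your proposal instead tries to reprove this by gluing local models $\mf{W}_j$ along overlaps.

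That gluing is where your argument has a genuine gap. You correctly identify the obstruction --- correcting $\mf{W}_j$ to match $\mf{W}_{j'}$ over $V_j\cap V_{j'}$ requires an admissible blowup of \emph{all} of $\mf{W}_j$ (one must extend an admissible blowup of the open formal subscheme over $V_j\cap V_{j'}$ to the ambient formal scheme, which is itself a nontrivial extension lemma), and this correction then disturbs the models over every other overlap $V_j\cap V_{j''}$ --- but you resolve it only by assertion. ``Iterating on pairs and triples'' does not obviously terminate, even locally: each iteration can reintroduce mismatches on neighboring overlaps, and one also has to arrange the cocycle condition on triple intersections, not just pairwise isomorphisms. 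Making this work requires an actual induction (e.g.\ over partial unions $V_1\cup\dots\cup V_n$, ordered so that each new piece meets only finitely many earlier ones, followed by a limit argument for infinite $J$), which is essentially the proof of the cited lemma in Bosch's book. So your strategy is viable but the central step is not carried out; the shortest repair is to do what the paper does and cite \cite[Lemma~8.4/5]{BoschLectures} once the cover has been put in finite-type, quasi-compact form.
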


\begin{proof}
Refining the given cover, we may assume that the $U_i$ are quasi-compact. By assumption, there is an open cover $X=\bigcup_{s\in S} X_s$ of finite type (i.e.\@,  each member intersects only finitely many others) by quasi-compact opens $X_s\subseteq X$. As $X_s$ are quasi-compact, for each $s$ there exists a finite subset $F_s \subset I$ such that $X_s \subset \bigcup_{i \in F_s} U_i$. Now, the open cover $\bigcup_{s \in S}\bigcup_{i \in F_s} (U_i \cap X_s)$ is a cover of finite type. Moreover, its members are quasi-compact, as $X$ was quasi-separated.  By \cite[8.4/5]{BoschLectures}, there exists a formal model $\mf{X}$ of $X$ and an open cover $\mf{X} = \bigcup_{s \in S}\bigcup_{i \in F_s} \mf{V}_{i,s}$ such that $(\mf{V}_{i,s})_\eta =  U_i \cap  X_s$.
\end{proof}

\subsection{A special case of the overconvergence criterion}

In order to apply our overconvergence criterion (Proposition~\ref{prop:spec-good-case}) to tame covering spaces, we show the following result.

\begin{prop} \label{prop:oc-criterion}
	Let $\mf{Y}\to\mf{X}$ be a morphism of formal schemes locally of finite type over $\cO_K$ which locally on $\mf{X}$ is the disjoint union of finite morphisms. Suppose that for every irreducible component $Z$ of $\mf{X}_k$, the base change $\mf{Y}\times_\mf{X} Z$ is locally irreducible\footnote{A topological space is locally irreducible if every point has an irreducible open neighborhood (equiv., a basis of irreducible open neighborhoods), or equivalently if it is the disjoint union of irreducible spaces.}  (e.\@g.\@ $(\mf{Y}\times_\mf{X} Z)_{\rm red}$ is normal). Then for all such $Z$, the scheme $\mf{Y}\times_{\mf{X}} Z$ is a disjoint union of finite $Z$-spaces.
\end{prop}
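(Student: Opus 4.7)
The plan is to reduce the statement to the following purely scheme-theoretic claim and prove that directly: if $f\colon W\to Z$ is a morphism of schemes which is locally on $Z$ the disjoint union of finite morphisms and $W$ is locally irreducible, then $f$ is a disjoint union of finite $Z$-schemes. The proposition then follows by taking $W=\mf{Y}\times_\mf{X} Z$, since the hypothesis that $\mf{Y}\to\mf{X}$ is locally on $\mf{X}$ a disjoint union of finite morphisms passes to the base change $W\to Z$ via the cover of $Z$ induced by intersecting with the given cover of $\mf{X}$.

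First I would use local irreducibility of $W$ to decompose $W=\coprod_\alpha W_\alpha$ into its irreducible components, which under this hypothesis are automatically open (and thus also closed). Since finiteness is local on the target, to show each $W_\alpha\to Z$ is finite it suffices to fix an open cover $Z=\bigcup_i U_i$ with $f^{-1}(U_i)=\coprod_{j\in J_i} V_{ij}$ and $V_{ij}\to U_i$ finite, and then verify that $W_\alpha\cap f^{-1}(U_i)\to U_i$ is finite for every $i$.

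The key observation is that each $W_\alpha$ is clopen in $W$, so $W_\alpha\cap V_{ij}$ is clopen in $V_{ij}$; being a closed subscheme of a scheme finite over $U_i$, it is itself finite over $U_i$. Now
\[
    W_\alpha\cap f^{-1}(U_i) \;=\; \coprod_{j\in J_i} (W_\alpha\cap V_{ij})
\]
is a disjoint union of opens in $W_\alpha$. Since $W_\alpha$ is irreducible, any non-empty open subset is irreducible, so at most one of the pieces $W_\alpha\cap V_{ij}$ can be non-empty. Hence $W_\alpha\cap f^{-1}(U_i)$ is either empty or equal to a single $W_\alpha\cap V_{ij}$, and is in either case finite over $U_i$, completing the argument.

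I do not anticipate a significant obstacle: the proof is elementary once one uses that in a locally irreducible space the connected components coincide with the irreducible components and are open. The only subtle point is the use of irreducibility of $W_\alpha$ to collapse the local disjoint-union decomposition to at most one non-empty summand, which is what transforms a local statement into a global one.
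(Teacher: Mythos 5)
Your proof is correct and follows essentially the same route as the paper: both arguments fix a clopen irreducible piece of $\mf{Y}\times_\mf{X}Z$ and use that a non-empty open subset of an irreducible space is irreducible to collapse the local disjoint-union decomposition over each $U_i$ to a single finite summand. The only cosmetic difference is that the paper refines the cover so each $V_{ij}$ is connected and identifies $T\times_Z U_i$ with one of them, whereas you intersect with the $V_{ij}$ directly and invoke that a clopen subscheme of a finite $U_i$-scheme is finite; both handle the same point.
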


\begin{proof}
Let $T$ be a connected component of $\mf{Y}\times_\mf{X} Z$. By assumption, $T$ is locally irreducible, and hence irreducible. We will show that $T\to Z$ is a finite morphism, which implies the assertion. If $Z=\bigcup U_i$ is an open cover such that $\mf{Y}\times_\mf{X} U_i$ is the disjoint union of finite maps $V_{ij}\to U_i$ with $V_{ij}$ connected, then $T\times_Z U_i$ is the disjoint union of some of the $V_{ij}$. But, since $T$ is irreducible, so is the open subset $T\times_Z U_i\subseteq T$. Therefore, we must have $T\times_Z U_i = V_{ij}$ for some index $j$. Thus $T\times_Z U_i\to U_i$ is finite for all $i$, and hence $T\to Z$ is finite. 
\end{proof}

Using Proposition~\ref{prop:spec-good-case} we obtain the following corollary.

\begin{cor}\label{cor:oc-crit-special-case}
    Let $\mf{Y}\to\mf{X}$ be a morphism of formal schemes satisfying the assumptions in Proposition \ref{prop:oc-criterion} and such that $\mf{Y}_\eta\to\mf{X}_\eta$ is \'etale. Then, $\mf{Y}_\eta\to\mf{X}_\eta$ is a de Jong covering space.
\end{cor}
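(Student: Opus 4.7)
The plan is a direct combination of Proposition \ref{prop:oc-criterion} with the overconvergence criterion (Proposition \ref{prop:spec-good-case}). By Proposition \ref{prop:oc-criterion}, the assumption that each base change $\mf{Y}\times_\mf{X} Z$ is locally irreducible, for $Z$ an irreducible component of $\mf{X}_k$, upgrades to the stronger statement that each such $\mf{Y}\times_\mf{X} Z$ is a disjoint union of finite $Z$-spaces. Together with the \'etaleness of $\mf{Y}_\eta \to \mf{X}_\eta$, this is precisely the hypothesis required by Proposition \ref{prop:spec-good-case}, from which we conclude that $\mf{Y}_\eta\to\mf{X}_\eta$ is a de Jong covering space.

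The only caveat is that Proposition \ref{prop:spec-good-case} is formulated for $\mf{X}$ admissible and quasi-compact, whereas in our setting $\mf{X}$ is merely locally of finite type. Since being a de Jong covering space is a local condition on the target in the overconvergent topology --- for each point $x\in\mf{X}_\eta$ we need only exhibit an overconvergent neighborhood on which the map splits into a disjoint union of finite \'etale coverings --- we may restrict to any affine open $\mf{U}\subseteq\mf{X}$ containing $\op{sp}_{\mf{X}}(x)$. The local irreducibility hypothesis of Proposition \ref{prop:oc-criterion} restricts cleanly, since the irreducible components of $\mf{U}_k$ are precisely the non-empty intersections of $|\mf{U}|$ with irreducible components of $\mf{X}_k$.

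Regarding admissibility: inspection of the proof of Proposition \ref{prop:spec-good-case} reveals that admissibility is not essentially used, as the argument proceeds via formal completions along irreducible components of $\mf{X}_k$, the tube open formalism of \S\ref{ss:tube}, and Proposition \ref{prop:alt-refined-topological-invariance}, all of which remain valid for $\mf{X}$ of finite type over $\cO_K$ (the adic local finite presentation hypothesis in Proposition \ref{prop:alt-refined-topological-invariance} is automatic since $\mf{Y}\to \mf{X}$ is locally a disjoint union of finite morphisms). Alternatively, one may pass to an admissible blowup of $\mf{X}$ to enforce admissibility without changing $\mf{X}_\eta$. In either case there is no real obstacle to overcome, and the corollary is effectively a bookkeeping consequence of the two input results.
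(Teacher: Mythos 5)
Your core argument is exactly the intended one: the paper offers no separate proof of this corollary beyond the phrase ``Using Proposition~\ref{prop:spec-good-case} we obtain the following corollary,'' i.e.\ Proposition~\ref{prop:oc-criterion} upgrades local irreducibility to ``disjoint union of finite $Z$-spaces'' over each irreducible component $Z$, and Proposition~\ref{prop:spec-good-case} then concludes. In the quasi-compact admissible case your write-up is complete and correct.

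The problem is your bridge to the general (merely locally of finite type) case. You invoke that being a de Jong covering space is local for the overconvergent topology and then ``restrict to any affine open $\mf{U}\subseteq\mf{X}$ containing $\op{sp}_{\mf{X}}(x)$.'' But $\mf{U}_\eta=\op{sp}_{\mf{X}}^{-1}(\mf{U})$ is a quasi-compact open of $\mf{X}_\eta$ that is in general \emph{not} overconvergent in $\mf{X}_\eta$, and the opens over which Proposition~\ref{prop:spec-good-case} splits the covering are the tubes $T(\mf{U}|Z\cap|\mf{U}|)$, which are overconvergent only \emph{in $\mf{U}_\eta$}, not in $\mf{X}_\eta$. (Concretely: for the formal annulus $\Spf(\cO_K\langle x,y\rangle/(xy-\varpi))$ and $\mf{U}=D(x)$, one has $T(\mf{U}|\mf{U}_k)=\mf{U}_\eta=\{|x|=1\}$, which fails to contain any overconvergent neighborhood of its Gauss point inside the annulus.) Checking the de Jong property over a Zariski cover of the formal model is exactly $\adm$-descent, which the paper shows \emph{fails} for de Jong coverings --- Example~\ref{ex:oc-crit-counterex} is a map splitting into finite \'etale coverings over each $\mf{U}_{\pm,\eta}$ that is not a de Jong covering space --- so this reduction cannot be repaired by locality alone. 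The correct bridge is either to note that the tubes $T(\mf{X}|Z)$ of the \emph{global} irreducible components still form an overconvergent cover when the component decomposition of $\mf{X}_k$ is locally finite (extending Proposition~\ref{prop:tube-properties}), or to run the taut/quasi-paracompact reduction used in the proof of Theorem~\ref{thm:specialization}, which produces quasi-compact opens that \emph{do} contain overconvergent neighborhoods of each point. Separately, your fallback ``pass to an admissible blowup to enforce admissibility'' is not free either: a blowup changes $\mf{X}_k$ and its irreducible components, so the local irreducibility hypothesis of Proposition~\ref{prop:oc-criterion} would have to be re-established on the blowup rather than simply transported.
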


\begin{example} \label{ex:oc-crit-counterex}
In \cite[\S 5.3]{ALY1}, we constructed over $K$ of equal characteristic $p>0$ an example of an $\adm$-covering space $Y\to X$ which is not a de Jong covering space, and it is instructive to see why the above criterion does not apply to it. We recall the setup: $X$ is the annulus $\{|\varpi|\leqslant |x|\leqslant |\varpi|^{-1}\}$ covered by the two annuli 
\[
    U^- = \{|\varpi|\leqslant |x|\leqslant 1\}
    \quad\text{and}\quad
    U^+=\{1\leqslant |x|\leqslant |\varpi|^{-1}\},
\]
intersecting along the unit circle $U^+\cap U^-=C=\{|x|=1\}$. The restriction of $Y\to X$ to $U^\pm$ is a disjoint union of well-chosen Artin--Schreier coverings $Y^\pm_n$ ($n\in \mb{Z}$) which are split over $C$, and $\coprod_n Y_n^-$ and $\coprod_n Y^+_n$ are identified suitably over $C$ in order to make $Y$ connected.

Construct a formal model $\mf{X}$ of $X$ by gluing together the formal models
\[
    \mf{U}_+ = \Spf\left(\cO_K\langle x^+, y^+\rangle/(x^+y^+ - \varpi)\right)
    \quad \text{and} \quad 
    \mf{U}_- = \Spf\left(\cO_K\langle x^-, y^-\rangle/(x^-y^- - \varpi)\right)
\]
of $U^+$ and $U^-$ along $\mf{C} = \Spf(\cO_K\langle x^+,x^-\rangle/(x^+x^- - 1))$ identified with $D(x^\pm)\subseteq \mf{U}^\pm$. (In terms of the coordinate $x$, we have $x^+=1/x$, $x^-=x$, $y^+=\varpi x$, $y^- = \varpi/x$.)   The irreducible component $Z$ of $\mf{X}_k$ containing $\mf{C}_k \simeq \mb{G}_{m,k}$ is isomorphic to $\mb{P}^1_k$.  The restriction of the normalization $\mf{Y}$ of $\mf{X}$ in $Y\to X$ to $Z$ will look like in Figure~\ref{fig:oc-crit-counterex}: it will consist of infinitely many copies of $Z$ glued together in a complicated way over $0,\infty\in Z$, and will not be locally irreducible.
\end{example}

\begin{figure}[ht!]
	\centering
	\includegraphics[width=.3\textwidth]{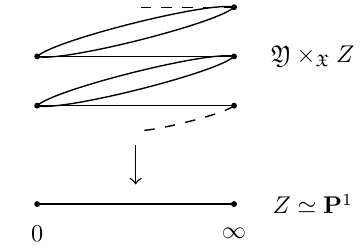}
	\caption{Example~\ref{ex:oc-crit-counterex} ($p=3$), compare with \cite[Figure~2]{ALY1}.}
	\label{fig:oc-crit-counterex}
\end{figure}

\subsection{Logarithmic Abhyankar's lemma}\label{ss:log-abhy}

The result below describes the local structure of a finite covering of a regular scheme tamely ramified along an strict normal crossings divisor \stacks{0CBN}. It is a basic consequence of the logarithmic Abhyankar's lemma (see \cite[Theorem~3.3]{Mochizuki} and \cite[Theorem~13.3.43]{GabberRameroFoundations_v7.5}), but to make the exposition more self-contained we give a direct proof using the usual Abhyankar's lemma.

In order to state the result, we first recall some basic terminology from log geometry. For a commutative monoid $P$ (written additively), we write $P\to P^{\rm gp}$ for the initial homomorphism into a group. The monoid $P$ is called \emph{integral} if $P\to P^{\rm gp}$ is injective, \emph{fine} if it is finitely generated and integral, and \emph{saturated} if it is integral and for $x\in P^{\rm gp}$ such that $nx\in P$ for some $n\geqslant 1$, we have $x\in P$. Finally, a morphism of fine and saturated monoids $\alpha\colon P\to Q$ is called \emph{Kummer} (see \cite[1.6]{IllusieFKN}) if it is injective and for every $x\in Q$ there exists an $n\geqslant 1$ such that $nx\in \alpha(P)$. Equivalently, $P=Q\cap \Lambda$ for a subgroup $\Lambda\subseteq Q^{\rm gp}$ of finite index. 

\begin{thm} \label{thm:log-abhy}
    Let $R$ be a strictly henselian regular local ring, and let $x_1, \ldots, x_r\in \mf{m}_R$ be elements whose images in $\mf{m}_R/\mf{m}^2_R$ are linearly independent. Let $X=\Spec(R)$, $D_i=V(x_i)\subseteq X$ ($i=1, \ldots, r$), and $U=D(x_1\cdots x_r) = X\setminus (D_1 \cup\ldots\cup D_r)$. Let $V\to U$ be a connected finite \'etale covering which is tamely ramified along $D_1, \ldots, D_r$, and let $f\colon Y\to X$ be the normalization of $X$ in $V$. Then there exists a Kummer homomorphism $\alpha\colon \mb{N}^r\to Q$ with $Q^{\rm gp}/\mb{Z}^r$ of order invertible in $R$ and an isomorphism of $X$-schemes 
    \[
        Y \simeq \Spec \left(R\otimes_{\mb{Z}[\mb{N}^r]} \mb{Z}[Q]\right)
    \]
    where $\mb{Z}[\mb{N}^r]\to R$ sends $e_i$ to $x_i$.
\end{thm}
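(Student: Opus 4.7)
The plan is to reduce to the classical Abhyankar's lemma: after a Kummer base change large enough to trivialize the ramification along each $D_i$, the cover $V \to U$ becomes totally split, and then $Y$ emerges as an intermediate quotient of the Kummer cover by a subgroup of its Galois group. Let $e_i$ denote the ramification index of $V$ along $D_i$, and set $n$ to be the least common multiple of $e_1, \ldots, e_r$, which is invertible in $R$ by the tameness hypothesis. Form the Kummer extension $R_n := R[t_1, \ldots, t_r]/(t_i^n - x_i)$ and set $X_n := \Spec(R_n)$, $U_n := X_n \times_X U$. Completing $x_1, \ldots, x_r$ to a regular system of parameters $x_1, \ldots, x_r, y_1, \ldots, y_s$ of $R$, the elements $t_1, \ldots, t_r, y_1, \ldots, y_s$ form a regular system of parameters of $R_n$. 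Since $R$ is strictly henselian and $n$ is invertible, $\mu_n \subseteq R$ by Hensel's lemma, the extension $R \to R_n$ is finite free of rank $n^r$, $R_n$ is strictly henselian regular local, and $X_n \to X$ is a Galois cover with group $G = \mu_n^r$.

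By the classical Abhyankar's lemma, the normalization $Y_n \to X_n$ of $X_n$ in $V_n := V \times_U U_n$ is finite \'etale, and since $X_n$ is strictly henselian, $Y_n$ is a disjoint union of copies of $X_n$. The action of $G$ on $V_n$ over $V$ extends by functoriality of normalization to an action on $Y_n$ with $Y_n/G = Y$; since $V$ is connected, so is $Y$, and hence $G$ acts transitively on the set of connected components of $Y_n$. Picking a component $W \simeq X_n$ with stabilizer $H \subseteq G$, we conclude $Y \simeq W/H \simeq \Spec(R_n^H)$. To identify $R_n^H$, write $P := \mathbb{N}^r$ and $P_n := \tfrac{1}{n}\mathbb{N}^r$, so that multiplication by $n$ realizes $R_n = R \otimes_{\mathbb{Z}[P]} \mathbb{Z}[P_n]$ as a $P_n^{\rm gp}/P^{\rm gp}$-graded algebra, and $G = \mathrm{Hom}(P_n^{\rm gp}/P^{\rm gp}, \mu_n)$. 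Let $M := H^\perp \subseteq P_n^{\rm gp}/P^{\rm gp}$, let $\Lambda \subseteq P_n^{\rm gp}$ be its preimage, and set $Q := \Lambda \cap P_n$. Then $Q$ is fine and saturated with $Q^{\rm gp} = \Lambda$, the inclusion $P \hookrightarrow Q$ is Kummer, the group $Q^{\rm gp}/P^{\rm gp}$ embeds into $(\mathbb{Z}/n)^r$ and so has order dividing $n^r$ (hence invertible in $R$), and the natural inclusion $R \otimes_{\mathbb{Z}[P]} \mathbb{Z}[Q] \hookrightarrow R_n$ has image exactly $R_n^H$.

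The main obstacle will be the first step: invoking the classical Abhyankar's lemma with a single Kummer cover that simultaneously resolves the ramification along all of $D_1, \ldots, D_r$ at once, and checking that the iterated Kummer extension $R_n$ is still strictly henselian and regular. Once past this reduction, the remaining steps --- producing the trivial \'etale cover $Y_n$ via strict henselianity and the invariant-theoretic identification of $R_n^H$ with $R \otimes_{\mathbb{Z}[P]} \mathbb{Z}[Q]$ --- are essentially formal.
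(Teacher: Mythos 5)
Your proposal is correct and follows essentially the same route as the paper: pull back along the Kummer cover $X_n\to X$, apply the classical Abhyankar lemma plus strict henselianity to split the cover there, and recover $Y$ as the quotient of $X_n$ by the stabilizer subgroup $H\subseteq\mu_n^r$, with the monoid $Q=\Lambda\cap\frac1n\mb{N}^r$ cut out by $H^\perp$. The only (harmless) variation is at the final identification: you get $Y\simeq\Spec(R_n^H)$ directly from $Y=Y_n/G$ and the component decomposition, so normality of $R\otimes_{\mb{Z}[\mb{N}^r]}\mb{Z}[Q]$ comes for free, whereas the paper instead produces a section of $Y_m\to X_m$ to write $V=U_m/H$ and then checks separately (via log-regularity, Lemma~\ref{lem:log-abhy-normal}) that $X_m/H$ is normal and hence equals the normalization $Y$.
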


Recall that if $D$ is a prime divisor on a scheme $X$ with generic point $\xi$ such that $\cO_{X, \xi}$ is a discrete valuation ring, we say that an \'etale morphism $V\to X$ is \emph{tamely ramified along $D$} if $V\otimes \op{Frac}(\cO^h_{X, \xi})$ is the disjoint union of spectra of finite extensions of $\op{Frac}(\cO^h_{X, \xi})$ which are tame with respect to the given discrete valuation.

\begin{proof}[Proof of Theorem \ref{thm:log-abhy}]
By Abhyankar's lemma \cite[Exp.\ XIII, App.\ I, Prop.\ 5.2]{SGA1}, there exists an integer $m\geqslant 1$ invertible on $X$ such that if
\[
    X_m = \Spec \left(R[y_1, \ldots, y_r]/(y_1^m - x_1, \ldots, y_r^m - x_r)\right)
\]
and $U_m = U\times_X X_m$, then $V\times_U U_m\to U_m$ extends to a finite \'etale $Y_m\to X_m$. Since $X_m$ is again the spectrum of a strictly henselian local ring, the covering $Y_m\to X_m$ admits a section. In particular, we have a morphism $U_m\to V$ of finite \'etale coverings of $U$, which is surjective because $V$ is connected. But $U_m\to U$ is a torsor for the group $\mu_m^r$ (here we write $\mu_m$ for $\mu_m(R)$, which is cyclic of order $m$ because $R$ is strictly henselian and $m$ is invertible in $R$), acting on $X_m$ and $U_m$ by $y_i\mapsto \zeta_i y_i$ for $(\zeta_1, \ldots, \zeta_r)\in \mu_m^r$. Let $H\subseteq \mu_m^r$ be the subgroup fixing the elements of the image of $\Gamma(V, \cO_V)\to \Gamma(U_m, \cO_{U_m})$, so that $V=U_m/H$. We set $Y'=X_m/H$, which is something we will explicitly compute in the next paragraph, and claim that $Y'\simeq Y$, i.e.\ that $Y'$ equals the normalization of $X$ in $V=Y'\times_X U$. For this, since $Y'$ is integral and finite over $X$, it suffices to check that $Y'$ is normal, which will also follow from the computation.

Let $\Lambda \subseteq\frac{1}{m}\mb{Z}^r$ be the orthogonal complement of $H$ with respect to the pairing 
\[ 
    \mu_m^r \times \tfrac{1}{m}\mb{Z}^r \to \mu_m, \quad (\zeta_1, \ldots, \zeta_r)\cdot \left(\tfrac{n_1}{m}, \ldots, \tfrac{n_r}{m}\right) = \zeta_1^{n_1} \cdot \ldots \cdot \zeta_r^{n_r},
\]
and set $Q = \Lambda\cap \frac{1}{m}\mb{N}^r$. Then a monomial $y_1^{n_1}\ldots y_r^{n_r} \in \Gamma(X_m, \cO_{X_m})$ is $H$-invariant if and only if $(\frac{n_1}{m}, \ldots, \frac{n_r}{m})\in Q$. Note that we have Kummer homomorphisms $\mb{N}^r\hookrightarrow Q\hookrightarrow \frac{1}{m}\mb{N}^r$, and $X_m \simeq \Spec(R\otimes_{\mb{Z}[\mb{N}^r]}\mb{Z}[\frac{1}{m}\mb{N}^r])$. Moreover, the homomorphism $\mb{Z}[Q]\to \mb{Z}[\frac{1}{m}\mb{N}^r]$ is a split injection of $\mb{Z}[\mb{N}^r]$-modules (send an element of $\frac{1}{m}\mb{N}^r$ to itself if it belongs to $Q$, otherwise send it to zero -- this gives a $\mb{Z}[Q]$-module homomorphism because $Q\hookrightarrow \frac{1}{m} \mb{N}^r$ is Kummer). We conclude from this that the natural map 
\begin{equation*}
    R\otimes_{\mathbf{Z}[\mathbf{N}^r]}\mathbf{Z}[Q]\to \Gamma(X_m,\h_{X_m})^H=\left(R\otimes_{\mathbf{Z}[\mathbf{N}^r]} \mathbf{Z}[\tfrac{1}{m}\mathbf{N}^r]\right)^H
\end{equation*}
is an isomorphism. Indeed, the surjectivity is clear from the discussion above, as is in the injectivity since we have tensored with a split injection. Therefore,
\[ 
    Y'\simeq \Spec \left(R\otimes_{\mb{Z}[\mb{N}^r]} \mb{Z}[Q]\right),
\]
 which is normal by Lemma~\ref{lem:log-abhy-normal} below.
\end{proof}

We state the lemma below separately as we shall need it again. The proof uses logarithmic geometry, but the result is also easy to show directly.

\begin{lem} \label{lem:log-abhy-normal}
    Let $R$ be a regular local ring, and let $x_1, \ldots, x_r\in \mf{m}_R$ be elements whose images in $\mf{m}_R/\mf{m}^2_R$ are linearly independent. Let $\alpha\colon \mb{N}^r\to Q$ be a Kummer homomorphism of fine and saturated monoids with $Q^{\rm gp}/\mb{Z}^r$ of order invertible in $R$ and let $\mb{N}^r\to R$ send $e_i$ to $x_i$. Then the ring $R\otimes_{\mb{Z}[\mb{N}^r]} \mb{Z}[Q]$ is normal.
\end{lem}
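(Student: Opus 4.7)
The plan is to realize $A := R \otimes_{\mb{Z}[\mb{N}^r]} \mb{Z}[Q]$ as a ring of invariants inside an explicit regular overring, and deduce its normality from that of the larger ring. First, since $Q$ is saturated with $|Q^{\rm gp}/\mb{Z}^r| = m$, one checks that $Q = Q^{\rm gp} \cap \tfrac{1}{m}\mb{N}^r$, producing Kummer inclusions $\mb{N}^r \hookrightarrow Q \hookrightarrow \tfrac{1}{m}\mb{N}^r$ and correspondingly ring inclusions $R \subseteq A \subseteq R'$, where
\[
R' := R \otimes_{\mb{Z}[\mb{N}^r]} \mb{Z}[\tfrac{1}{m}\mb{N}^r] \simeq R[y_1,\ldots,y_r]/(y_1^m - x_1, \ldots, y_r^m - x_r).
\]

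The first step is to verify that $R'$ is a regular local ring. Extending $x_1,\ldots,x_r$ to a regular system of parameters $(x_1,\ldots,x_r, z_{r+1},\ldots,z_d)$ of $R$, the closed fiber $R'/\mf{m}_R R' \simeq k[y_1,\ldots,y_r]/(y_1^m,\ldots,y_r^m)$ is local with maximal ideal $(y_1,\ldots,y_r)$, so $R'$ is itself local with maximal ideal $(y_1,\ldots,y_r, z_{r+1},\ldots,z_d)$. Since $R'$ is finite free of rank $m^r$ over $R$, one has $\dim R' = d$, matching the number of generators of the maximal ideal, so $R'$ is regular. In particular $R'$ is a normal domain, and therefore $A$ is a domain as a subring of $R'$.

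Next, I would reduce to the case $\mu_m \subseteq R$. The cyclotomic cover $R \to R_1 := R[\zeta]/\Phi_m(\zeta)$ is finite \'etale (since $m \in R^\times$) and faithfully flat, so $A \to A \otimes_R R_1$ is finite \'etale and faithfully flat as well. Since the conditions $(R_1)$ and $(S_2)$ descend along faithfully flat homomorphisms of Noetherian rings, normality of $A$ follows from normality of $A \otimes_R R_1$, so we may assume $\mu_m \subseteq R$. Under this assumption, $\mu_m^r$ acts on $R'$ by $y_i \mapsto \zeta_i y_i$, inducing a $(\mb{Z}/m)^r$-grading on $R'$ in which $y_1^{a_1}\cdots y_r^{a_r}$ has degree $\ov a$. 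By construction, $A \subseteq R'$ is the sum of the graded pieces with degree in $G := Q^{\rm gp}/\mb{Z}^r \subseteq (\mb{Z}/m)^r$; letting $H \subseteq \mu_m^r$ be the annihilator of $G$ under the Pontryagin pairing, this gives $A = (R')^H$.

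Finally, I would invoke the standard fact that invariants under a finite group action on a normal domain form a normal domain: $R'$ is integral over $A$, a direct computation with the $H$-norm shows $\operatorname{Frac}(A) = \operatorname{Frac}(R')^H$, and any element of $\operatorname{Frac}(A)$ integral over $A$ lies in $R'$ (by normality of $R'$) and is $H$-invariant, hence in $A$. The conceptual core of the argument is the identification $A = (R')^H$; the step requiring the most care in writing up is the faithfully flat descent of normality in the reduction to $\mu_m \subseteq R$.
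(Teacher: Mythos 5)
Your argument is correct, but it takes a genuinely different route from the paper's. The paper proves the lemma by quoting Kato's theory of log regularity: $\Spec(\mb{N}^r\to R)$ is log regular, the morphism induced by the Kummer homomorphism $\alpha$ is log \'etale, hence $\Spec(Q\to R\otimes_{\mb{Z}[\mb{N}^r]}\mb{Z}[Q])$ is log regular and therefore normal --- three citations and essentially no computation. You instead give the direct, self-contained proof that the paper only alludes to (``the result is also easy to show directly''): you sandwich $A=R\otimes_{\mb{Z}[\mb{N}^r]}\mb{Z}[Q]$ between $R$ and the visibly regular local ring $R'=R[y_1,\dots,y_r]/(y_i^m-x_i)$, identify $A=(R')^H$ for a finite group $H$ of order invertible in $R$, and conclude by the standard fact that invariants of a finite group acting on a normal domain are normal. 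This is in effect the same $\mu_m^r$-action and the same orthogonal-complement subgroup $H$ that appear in the paper's proof of Theorem~\ref{thm:log-abhy}, where the normality of $X_m/H$ is exactly what gets delegated to this lemma; your proof closes that circle without logarithmic geometry, at the cost of the auxiliary reduction to $\mu_m\subseteq R$ (which is vacuous in the paper's applications, where $R$ is always strictly henselian, but is needed for the lemma as stated). Two points deserve an explicit line in a write-up, though neither is a gap. First, the injectivity of $A\to R'$ is not formal: it holds because $\mb{Z}[Q]\to\mb{Z}[\tfrac1m\mb{N}^r]$ is a \emph{split} injection of $\mb{Z}[Q]$-modules, which rests on the identity $Q=Q^{\rm gp}\cap\tfrac1m\mb{N}^r$ --- this is precisely the splitting constructed in the paper's proof of Theorem~\ref{thm:log-abhy}, so you should either cite it or reproduce it. Second, $R_1=R[\zeta]/\Phi_m(\zeta)$ need not be local or a domain; since normality is a local condition this is harmless, but you should pass to the local factors of $R_1$, each of which is again regular local with the $x_i$ still linearly independent in $\mf{m}/\mf{m}^2$ (the maximal ideal is extended along an \'etale map) and each of which genuinely contains a cyclic group $\mu_m$ of order $m$ (any root of $\Phi_m$ in a local ring with $m$ invertible has exact multiplicative order $m$); the perfectness of the pairing used to identify $(R')^H$ with the span of the graded pieces indexed by $G$ depends on this.
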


\begin{proof}
The log scheme $X=\Spec(\mb{N}^r\to R)$ is log regular (cf.\@ \cite[Definition 2.1]{KatoToric}). Indeed, by \cite[Proposition 7.1]{KatoToric} it suffices to check the conditions (i) and (ii) of loc.\@ cit.\@ at the closed point of $\Spec(R)$. In other words, we must show that $R/(x_1,\ldots,x_r)$ is regular and that $\dim(R)=\dim(R/(x_1,\ldots,x_r))+\mathrm{rank}(\mb{Z}^r)$ (note $\mc{M}^{\rm gp}_X/\cO^\times_X\simeq \bigoplus_{i=1}^r\underline{\mb{Z}}_{V(x_i)}$), but both these statements are obvious. Moreover, the map of log schemes 
\[ 
    \Spec(Q\to R\otimes_{\mb{Z}[\mb{N}^r]} \mb{Z}[Q]) \to \Spec(\mb{N}^r\to R)
\]
is log \'etale by \cite[Theorem 3.5]{KatoLog}. Therefore $\Spec(Q\to R\otimes_{\mb{Z}[\mb{N}^r]} \mb{Z}[Q])$ is log regular by \cite[Theorem 8.2]{KatoToric}, and so $R\otimes_{\mb{Z}[\mb{N}^r]} \mb{Z}[Q]$ is normal by \cite[Theorem 4.1]{KatoToric}.
\end{proof}

The following corollary is the crux of the argument in this section.

\begin{cor} \label{cor:log-abhy}
    Let $X$ be a regular scheme and let $D=D_1\cup\ldots\cup D_r$ be a divisor with strict normal crossings. Let $U=X\setminus D$ and let $V\to U$ be a finite \'etale morphism which is tamely ramified along $D_1, \ldots, D_r$. Let $Y\to X$ be the normalization of $X$ in $V$. Then the schemes $(Y\times_X D_i)_{\rm red}$ are normal for $i=1, \ldots, r$. 
\end{cor}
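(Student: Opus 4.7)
The approach is to reduce to the setting of Theorem~\ref{thm:log-abhy} and then carry out an explicit monoid-ring computation of the reduced fiber. Since normality of a Noetherian local ring is equivalent to normality of its strict henselization, I would fix a geometric point $\bar p$ of $D_i$ and check the conclusion after passing to $R = \cO_{X, \bar p}^{\mathrm{sh}}$. Let $D_{i_1}, \ldots, D_{i_s}$ be the components of $D$ through $\bar p$; relabel so that these are $D_1, \ldots, D_s$ with $D_i = D_1$. The SNC hypothesis furnishes $x_1, \ldots, x_s \in \mf{m}_R$ with linearly independent images in $\mf{m}_R/\mf{m}_R^2$ such that $D_j = V(x_j)$. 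The pullback $V_R$ of $V$ to $D(x_1 \cdots x_s) \subseteq \Spec R$ remains finite \'etale and tamely ramified along the $V(x_j)$, and its normalization $Y_R$ splits as a disjoint union of normalizations of its connected components. Theorem~\ref{thm:log-abhy} presents each piece of $Y_R$ as $\Spec(R \otimes_{\mb{Z}[\mb{N}^s]} \mb{Z}[Q])$ for some Kummer map $\alpha\colon \mb{N}^s \hookrightarrow Q$ with $Q^{\rm gp}/\mb{Z}^s$ of order invertible in $R$, so it suffices to prove that the reduction of $(R/x_1) \otimes_{\mb{Z}[\mb{N}^s]} \mb{Z}[Q]$ is normal for each such $Q$.

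Next comes the monoid-ring computation. Choose $m$ invertible in $R$ with $\mb{N}^s \subseteq Q \subseteq \tfrac{1}{m}\mb{N}^s$; the cone spanned by $Q$ in $Q^{\rm gp}\otimes\mb{R}$ is then $\mb{R}_{\geqslant 0}^s$, and saturatedness forces $Q = Q^{\rm gp} \cap \mb{R}_{\geqslant 0}^s$. Set $F = \{q\in Q : q_1 = 0\}$, a face of $Q$. For $q\in Q\setminus F$ we have $q_1>0$, so for $n$ large $nq - e_1$ lies in $\mb{R}_{\geqslant 0}^s \cap Q^{\rm gp} = Q$; hence $[q]^n \in ([e_1])$ in $\mb{Z}[Q]$. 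For $f \in F$, the element $nf - e_1$ has first coordinate $-1$ and never lies in $Q$, so $[f]\notin \sqrt{([e_1])}$. Since $x_1 = [e_1]$ in $R \otimes_{\mb{Z}[\mb{N}^s]} \mb{Z}[Q]$, this identifies $\sqrt{(x_1)}$ with the ideal generated by $\{[q]:q\in Q\setminus F\}$, and quotienting yields the presentation
\[
    \bigl((R/x_1) \otimes_{\mb{Z}[\mb{N}^s]} \mb{Z}[Q]\bigr)_{\mathrm{red}} \;\simeq\; (R/x_1) \otimes_{\mb{Z}[\mb{N}^{s-1}]} \mb{Z}[F],
\]
with $\mb{N}^{s-1} \to R/x_1$ sending $e_j \mapsto x_j$ for $j\geqslant 2$ and $\mb{N}^{s-1} \hookrightarrow F$ the evident inclusion.

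Finally, I would invoke Lemma~\ref{lem:log-abhy-normal} on the right-hand side. The ring $R/x_1$ is regular local, and the images of $x_2,\ldots,x_s$ in $\mf{m}_{R/x_1}/\mf{m}_{R/x_1}^2$ are linearly independent. The inclusion $\mb{N}^{s-1}\hookrightarrow F$ is Kummer, since if $nf\in \mb{N}^s$ and $f_1 = 0$ then $nf\in \mb{N}^{s-1}$. The natural map $F^{\rm gp}/\mb{Z}^{s-1} \to Q^{\rm gp}/\mb{Z}^s$ is injective (its kernel is $F^{\rm gp}\cap \mb{Z}^s = \mb{Z}^{s-1}$), so $|F^{\rm gp}/\mb{Z}^{s-1}|$ divides $|Q^{\rm gp}/\mb{Z}^s|$ and is invertible in $R/x_1$. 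Lemma~\ref{lem:log-abhy-normal} then delivers the normality claim. The main obstacle is the monoid-ring identification of the reduced structure in the middle step -- essentially the toric-geometric fact that the reduction of the toric boundary divisor cut out by $x_1$ is the toric variety of the face $F$ -- and in particular the careful passage to radicals. Once that is settled, the rest is a formal verification of the hypotheses of Lemma~\ref{lem:log-abhy-normal}.
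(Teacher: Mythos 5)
Your proposal is correct and follows essentially the same route as the paper: reduce to the strictly henselian local case, apply Theorem~\ref{thm:log-abhy}, identify the reduction of the fiber with $(R/x_1)\otimes_{\mb{Z}[\mb{N}^{s-1}]}\mb{Z}[F]$ via the nilpotence of the monomials indexed by $Q\setminus F$ (your face $F$ is exactly the paper's submonoid $Q'$ of elements with a positive multiple in $\mb{N}^{r-1}$), and conclude by Lemma~\ref{lem:log-abhy-normal}. The only cosmetic difference is the order of steps: the paper deduces that the ideal generated by $Q\setminus Q'$ equals the radical of $(x_1)$ \emph{from} the normality (hence reducedness) of the quotient supplied by the lemma, rather than computing the radical beforehand, but this is the same argument.
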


\begin{proof}
Let $y\in Y\times_X D_i$ and let $x$ be its image in $X$. Replacing $X$ with $\Spec(\cO_{X, x})$ and $Y$ with its pullback, we may assume that $X$ is local. Further, since $\cO_{X, x}\to \cO_{X, x}^{\rm sh}$ is faithfully flat, we may assume that $X$ is strictly local by \stacks{033G}. Write $X=\Spec(R)$ and $D_i=V(x_i)$ for some $x_i\in R$. We are now in a situation where Theorem~\ref{thm:log-abhy} and Lemma~\ref{lem:log-abhy-normal} both apply.

Let $Y\simeq \Spec (R\otimes_{\mb{Z}[\mb{N}^r]} \mb{Z}[Q])$ be as in Theorem~\ref{thm:log-abhy}. Our goal is to show that the ring \[
    ((R\otimes_{\mb{Z}[\mb{N}^r]} \mb{Z}[Q])\otimes_R R/(x_i))_{\rm red}
\]
is normal for $i=1, \ldots, r$. For notational convenience, we prove this for $i=r$. The ring $R'=R/(x_r)$ is again a regular local ring, and $x_1, \ldots, x_{r-1}$ are elements of the maximal ideal which are linearly independent in $\mf{m}_{R'}/\mf{m}_{R'}^2$. We claim that there exists a Kummer homomorphism of fine and saturated monoids $\alpha'\colon \mb{N}^{r-1}\to Q'$ with $(Q')^{\rm gp}/\mb{Z}^{r-1}$ of order invertible in $R'$ such that
\[ 
    ((R\otimes_{\mb{Z}[\mb{N}^r]} \mb{Z}[Q])\otimes_R R')_{\rm red} \simeq R'\otimes_{\mb{Z}[\mb{N}^{r-1}]} \mb{Z}[Q'].
\]
The right hand side of this isomorphism is normal by Lemma~\ref{lem:log-abhy-normal}, so the claim implies the required assertion. Note that the left hand side can also be written as $(R\otimes_{\mb{Z}[\mb{N}^{r}]/(e_r)} \mb{Z}[Q]/(\alpha(e_r)))_{\rm red}$.

Let us first compute $(\Spec(\mb{Z}[Q]/(\alpha(e_r))))_{\rm red}$. To this end, let $\mb{N}^{r-1}\subseteq \mb{N}^r$ be the inclusion omitting the $r$-th factor, let  $Q'\subseteq Q$ consist of elements whose positive multiple lies in $\alpha(\mb{N}^{r-1})$, and let $\alpha'\colon \mb{N}^{r-1}\to Q'$ be the induced map. Every element of $Q\setminus Q'$ has a multiple in the ideal of $\mb{Z}[Q]$ generated by $\alpha(e_r)$. Consequently, the ideal $J$ generated by $Q\setminus Q'$ in $\mb{Z}[Q]/(\alpha(e_r))$ is nilpotent. Since $\mb{Z}[Q]/(\alpha(e_r), J)=\mb{Z}[Q]/(J)\simeq \mb{Z}[Q']$ is reduced, we have $(\Spec(\mb{Z}[Q]/(\alpha(e_r)))_{\rm red}\simeq \mb{Z}[Q']$. Note that $\alpha'\colon \mb{N}^{r-1}\to Q'$ is a Kummer homomorphism of fine and saturated monoids, and the order of $(Q')^{\rm gp}/\mb{Z}^{r-1}\subseteq Q^{\rm gp}/\mb{Z}^r$ is invertible in $R$. 

Applying $R'\otimes_{\mb{Z}[\mb{N}^{r-1}]}(-)$ to the surjection with nilpotent kernel $\mb{Z}[Q]/(\alpha(e_r))\to \mb{Z}[Q']$ we get a surjection with nilpotent kernel
\[ 
    R'\otimes_{\mb{Z}[\mb{N}^{r-1}]} \mb{Z}[Q]/(\alpha(e_r)) \to R'\otimes_{\mb{Z}[\mb{N}^{r-1}]} \mb{Z}[Q'].
\]
The target is reduced (as it is normal), and hence it is the reduction of the source.
\end{proof}

\subsection{Tame coverings}
\label{ss:tame}

We now begin the setup to the main theorem of this section, which shows that under the additional hypotheses including tameness, every $\et$-covering space is a de Jong covering space. The general definition we use is due to H\"ubner \cite{Hubner}.\footnote{A more restrictive notion of tame coverings of non-archimedean curves has been studied by Berkovich \cite[\S 6.3]{BerkovichEtale}, who showed, for example, that finite \'etale coverings of the unit disc which are tame in his sense are trivial. This is not the case for the notion we use here.} 

\begin{definition} \label{tame def}
	An \'etale morphism of rigid $K$-spaces $f\colon Y\to X$ is \emph{tame} if for every $y\in Y$, the corresponding finite extension of valued fields $k(f(x))\subseteq k(y)$ is tame.
\end{definition}

The second assumption we shall put on the base space is that it has sufficiently many semistable models, as defined below. Let us call a formal scheme $\mf{X}$ locally of finite type over $\cO_K$ \emph{generalized strictly semistable (gss)} if locally $\mf{X}$ admits an \'etale map to the formal scheme $\Spf\left(\cO_K\langle x_1, \ldots, x_n\rangle/(x_1^{m_1}\cdots x_n^{m_n} - \varpi)\right)$ for some $n\geqslant 0$ and $m_1, \ldots, m_n\geqslant 0$ not all zero.

\begin{definition} \label{def:semistable-cofinal}
    Let $X$ be a smooth quasi-paracompact and quasi-separated rigid $K$-space. We say that \emph{gss formal models of $X$ are cofinal} if for every admissible formal model $\mf{X}$ of $X$ there exists a finite extension $K'$ of $K$ with valuation ring $\cO_{K'}$, a generalized strictly semistable formal model $\mf{X}'$ of $X_{K'}$ over $\cO_{K'}$, and a morphism
	\[ 
		f\colon \mf{X}' \to \mf{X}_{\cO_{K'}}
	\] 
	such that $f_\eta$ is an isomorphism.
\end{definition}

The cofinality of gss formal models for a smooth quasi-paracompact and quasi-separated rigid $K$-space condition is satisfied (with $K'=K$) if $K$ is of equal characteristic zero (see \cite{TemkinDesingularization}), or if $X$ is of dimension one. Moreover, it would follow from functorial resolution of singularities for quasi-excellent schemes of dimension $\dim(X)+1$ (again with $K'=K$).

We note that for a gss formal scheme $\mf{X}$, the complete local rings $\widehat{\cO}_{\mf{X}, x}$ are regular, and the preimages of the irreducible components of $\mf{X}_k$ in $\Spec(\widehat{\cO}_{\mf{X}, x})$ are of the form $V(x_i)$ for elements $x_1, \ldots, x_r\in \mf{m}_x$ which are linearly independent modulo $\mf{m}^2_x$. In particular, $\mf{X}$ is formally normal, and hence $\eta$-normal by Lemma~\ref{lem:eta-normal-basic-criteria}.  

\begin{lem}\label{lem:normalization-tame} 
    Let $\mf{X}=\Spf(A)$ be a gss formal scheme over $\cO_K$ and let $f\colon Y\to \mf{X}_\eta$ be a finite \'etale and tame map of rigid $K$-spaces. Then, writing $Y=\Spa(B_K)$, the morphism of schemes $\Spec(B_K)\to \Spec(A)$ is tamely ramified along the irreducible components of $\Spec(A_k)$.
\end{lem}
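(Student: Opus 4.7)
The strategy is to translate the desired algebraic tame-ramification along each irreducible component $D_i$ of $\Spec(A_k)$ into a statement about the adic tameness of $f$ at a distinguished ``Gauss point'' $x_i$ of $\mf{X}_\eta$ specializing to the generic point $\xi_i$ of $D_i$. First I would use the gss hypothesis to show that the local ring $A_{\mf{p}_i}$ (for $\mf{p}_i\subseteq A$ the prime corresponding to $\xi_i$) is a discrete valuation ring: étale locally $A$ is a standard model $R_0=\cO_K\langle x_1,\ldots,x_n\rangle/(x_1^{m_1}\cdots x_n^{m_n}-\varpi)$, whose height-one primes containing $\varpi$ are the $(x_j)$ with $m_j>0$, and an étale extension of a DVR localized at a single prime over the maximal ideal is again a DVR. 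Write $\widehat{A}_i$ for the completion of $A_{\mf{p}_i}$.

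Next I would construct $x_i$. Since gss implies admissible and hence $\varpi$-torsion-free, the discrete valuation $v_i$ on $A_{\mf{p}_i}$ restricts to $A$ and extends continuously to a rank-one valuation on $A_K=A[\varpi^{-1}]$ with $v_i(\varpi)>0$. The power-bounded ring $A_K^\circ$ lies inside $\widehat{A}_i$ (being the integral closure of $A\subseteq\widehat{A}_i$ in $A_K$, and $\widehat{A}_i$ being integrally closed in its fraction field), so $v_i$ really is a point $x_i\in\mf{X}_\eta=\Spa(A_K,A_K^\circ)$. Its completed residue field is $\kappa(x_i)=\op{Frac}(\widehat{A}_i)$, and $x_i$ specializes to $\xi_i$ under $\op{sp}_\mf{X}$.

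Now the finite étale map $f\colon Y=\Spa(B_K)\to\mf{X}_\eta$ pulled back along $\Spa(\kappa(x_i))\to\mf{X}_\eta$ splits as $\coprod_{j}\Spa(\kappa(y_j))$, where $y_1,\ldots,y_s$ are the preimages of $x_i$ in $Y$; algebraically this amounts to $B_K\otimes_{A_K}\op{Frac}(\widehat{A}_i)=\prod_j\kappa(y_j)$, with each $\kappa(y_j)$ a finite separable (hence complete discretely valued) extension of $\op{Frac}(\widehat{A}_i)$. The tameness hypothesis on $f$ at $y_j$ in the sense of Definition~\ref{tame def} is precisely the assertion that the complete DVR extension $\kappa(y_j)/\op{Frac}(\widehat{A}_i)$ is tame.

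Finally I would descend to the henselian picture demanded by the algebraic definition. The finite étale algebra $B_K\otimes_{A_K}\op{Frac}(\cO^h_{A,\mf{p}_i})=\prod_k E_{i,k}$ is a product of finite separable extensions of the henselian DVR field, each $E_{i,k}$ carrying a unique extension of the valuation; base change to the completion recovers $B_K\otimes_{A_K}\op{Frac}(\widehat{A}_i)=\prod_k\widehat{E}_{i,k}$, matching the factorization $\prod_j\kappa(y_j)$. Since tameness of a finite extension of a henselian DVR is preserved and reflected by passage to the completion --- both conditions reducing to coprimality of the ramification index with the residue characteristic together with separability of the residue extension --- tameness of each $\kappa(y_j)/\op{Frac}(\widehat{A}_i)$ forces tameness of each $E_{i,k}/\op{Frac}(\cO^h_{A,\mf{p}_i})$, which is what ``tamely ramified along $D_i$'' means. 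The main obstacle is confirming that Hübner's adic notion of tameness at the rank-one point $x_i$ coincides with classical DVR tameness of the corresponding extension of complete discretely valued residue fields, and carefully identifying $\kappa(x_i)$ with $\op{Frac}(\widehat{A}_i)$.
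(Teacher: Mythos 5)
Your proposal is correct and follows essentially the same route as the paper: the paper likewise passes to the completed local ring $V_i=\widehat{\cO}_{\mf{X},\xi_i}$ at the generic point of each component (a complete DVR with fraction field $K_i$), pulls the tame \'etale map back along $(A_K,A)\to(K_i,V_i)$ — which is exactly your base change to the Gauss point $x_i$ with completed residue field $\op{Frac}(\widehat{A}_i)$ — invokes H\"ubner's \S3 to identify adic tameness there with classical tameness over the complete DVR, and then descends to the henselization $\cO^h_{\mf{X},\xi_i}$ via the invariance of the inertia group under completion. The only cosmetic difference is that you make the rank-one point and the splitting $B_K\otimes_{A_K}\op{Frac}(\widehat{A}_i)=\prod_j\kappa(y_j)$ explicit where the paper phrases the same step as a pullback square of adic spaces.
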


\begin{proof}
Let $\xi_1, \ldots, \xi_r$ be the generic points of the irreducible components of $\mf{X}_k$. The complete local rings $V_i=\widehat{\cO}_{\mc{X}, \xi_i}$ are discrete valuation rings and $\varpi$ is in the maximal ideal of $V_i$. We let $K_i = V_i[1/\varpi]=\op{Frac}(V_i)$. We have maps of Huber pairs $(A_K, A)\to (K_i, V_i)$ and pull-back squares
\[ 
	\xymatrix{
		\Spa(B_K\otimes_A K_i) \ar[r] \ar[d] & Y \ar[d] \\
		\Spa(K_i) \ar[r] & X .
	}
\]
Thus the left map is a tame \'etale map of adic spaces \cite[\S 3]{Hubner} and so $\Spec(B_K\otimes_A K_i)\to \Spec(V_i)$ is tamely ramified along the closed point of $\Spec(V_i)$. Then the same holds if we replace $V_i$ with the henselization $\cO_{\mc{X},\xi_i}^h$ (because $\cO_{\mc{X},\xi_i}^h$ and $V_i = \widehat{\cO}_{\mc{X},\xi_i}^h$ have the same inertia group), and hence $\Spec(B_K)\to \Spec(A)$ is tamely ramified at $\xi_i$.
\end{proof}

We now come to our main theorem.

\begin{thm} \label{thm:tame-comparison}
	Let $X$ be a smooth quasi-paracompact and quasi-separated rigid-analytic space and suppose that gss formal models of $X$ are cofinal. Let $Y\to X$ be an $\adm$-covering space. If $Y\to X$ is tame, then it is a de Jong covering space.
\end{thm}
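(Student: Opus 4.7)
The plan is to apply the overconvergence criterion of Corollary~\ref{cor:oc-crit-special-case} to a carefully constructed formal model of $Y\to X$. The goal is to produce, after possibly base-changing $K$ to a finite extension $K'$, a morphism $\mf{Y}\to\mf{X}$ of admissible formal schemes over $\cO_{K'}$ with $\mf{X}_\eta=X_{K'}$ and $\mf{Y}_\eta=Y_{K'}$, which is locally on $\mf{X}$ a disjoint union of finite morphisms, and such that for every irreducible component $Z$ of $\mf{X}_k$ the reduced base change $(\mf{Y}\times_\mf{X} Z)_{\rm red}$ is normal. Proposition~\ref{prop:oc-criterion} and Corollary~\ref{cor:oc-crit-special-case} will then identify $Y_{K'}\to X_{K'}$ as a de Jong covering space, and a descent argument along $K'/K$ recovers the statement for $Y\to X$.

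To build the model, I would first refine the open cover witnessing the $\adm$-covering structure of $Y\to X$ to one by quasi-compact opens $X=\bigcup_i U_i$ with $Y_{U_i}\to U_i$ a disjoint union of finite \'etale coverings. Lemma~\ref{lem:covering} then furnishes an admissible formal model $\mf{X}_0$ of $X$ with an open cover $\mf{X}_0=\bigcup_j \mf{V}_{0,j}$ refining $\{U_i\}$ on generic fibers. Invoking the cofinality assumption (Definition~\ref{def:semistable-cofinal}), I would pass to a finite extension $K'/K$ and a gss formal model $\mf{X}$ of $X_{K'}$ admitting a morphism $\mf{X}\to (\mf{X}_0)_{\cO_{K'}}$ that is an isomorphism on generic fibers. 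The pulled-back cover $\mf{X}=\bigcup_j \mf{V}_j$ then has the property that on each $(\mf{V}_j)_\eta$ the base change of $Y_{K'}\to X_{K'}$ is a disjoint union of tame finite \'etale coverings, tameness being preserved under base change and restriction. Corollary~\ref{cor:normalization}, applied on each $\mf{V}_j$ and then glued, produces $\mf{Y}\to\mf{X}$, locally on $\mf{X}$ a disjoint union of finite morphisms, with $\mf{Y}_\eta=Y_{K'}$.

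The heart of the argument is the verification of the normality of $(\mf{Y}\times_\mf{X} Z)_{\rm red}$ for each irreducible component $Z\subseteq\mf{X}_k$. At each point $z\in Z$, the gss structure of $\mf{X}$ forces $\widehat{\cO}_{\mf{X},z}$ to be a regular local ring in which the irreducible components of $\mf{X}_k$ through $z$ cut out divisors of the form $V(x_i)$, for $x_1,\ldots,x_r$ part of a regular system of parameters. On an affine chart $\mf{V}_j=\Spf(A)\subseteq\mf{X}$ around $z$, Lemma~\ref{lem:normalization-tame} ensures that the algebraic cover $\Spec(B_K)\to\Spec(A)$ corresponding to any finite connected component of $Y_{K'}$ over $(\mf{V}_j)_\eta$ is tamely ramified along the components of $\Spec(A_k)$. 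Completing at $z$, Corollary~\ref{cor:log-abhy} applies directly to the resulting strict-normal-crossings configuration and yields that the reduced fiber of the normalization over each $V(x_i)$ is normal. I would then appeal to the excellence of topologically finitely generated $\cO_K$-algebras, so that normalization commutes with the completion at $z$ and normality descends from the completion to the original local ring, to conclude the required normality of $(\mf{Y}\times_\mf{X} Z)_{\rm red}$.

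I expect the principal obstacle to be precisely this normality verification, which weaves together formal, algebraic, and toric-logarithmic aspects of the geometry: one must move between the formal scheme $\mf{X}$, its algebraization on affines, the completion at a point, and back. The cofinality hypothesis on gss models is exactly the geometric input that places us, locally at every point of $\mf{X}_k$, into the strict-normal-crossings setting required by Corollary~\ref{cor:log-abhy}. The final descent of the de Jong covering structure from $K'$ to $K$ should be comparatively routine, given the stability of overconvergent covers and finite \'etale coverings under finite base field extensions.
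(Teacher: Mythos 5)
Your proposal is correct and follows essentially the same route as the paper: reduce to a finite extension, use Lemma~\ref{lem:covering} and gss cofinality to get a semistable model with a compatible cover, take the $\eta$-normal model from Corollary~\ref{cor:normalization}, and verify the hypothesis of Proposition~\ref{prop:oc-criterion} via Lemma~\ref{lem:normalization-tame} and Corollary~\ref{cor:log-abhy}. The only (cosmetic) difference is in the normality check: the paper observes that $\Spec(A)$ is already regular with $V(\varpi)_{\rm red}$ a strict normal crossings divisor (regularity descends from the completed local rings) and applies Corollary~\ref{cor:log-abhy} directly to $\Spec(A)$, whereas you complete at each point first and then descend normality using excellence, which works but imports extra commutation facts (normalization and reduction versus completion) that the paper's phrasing avoids.
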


\begin{proof}
To show the assertion, it suffices to prove that $Y_{K'}\to X_{K'}$ is a de Jong covering space for some finite extension of $K'$ of $K$. Indeed, in the case of a separable extension, this follows from \cite[Lemma 2.3]{deJongFundamental}), and in the purely inseparable case this is simple since $\Spa(K')\to\Spa(K)$ is a universal homeomorphism. By assumption, there exists an open cover $X=\bigcup_{i\in I} U_i$ such that $Y|_{U_i}$ is the disjoint union of finite \'etale covers of $U_i$. By Lemma~\ref{lem:covering}, we may assume that $U_i = \mf{U}_{i,\eta}$ for an open cover $\mf{X}=\bigcup_{i\in I}\mf{U}_i$ of some admissible formal model $\mf{X}$ of $X$. Replacing $K$ with a finite extension and performing an admissible blowup, we may assume that $\mf{X}$ is generalized strictly semistable, and in particular $\eta$-normal. 

Let $\mf{Y}\to \mf{X}$ be the unique model of $Y\to X$ with $\mf{Y}$ $\eta$-normal and which is locally on $\mf{X}$ the disjoint union of finite morphisms, provided by Corollary~\ref{cor:normalization}. Let $D_1, \ldots, D_r$ be the irreducible components of $(\mf{X}_k)_{\rm red}$. Thanks to Proposition~\ref{prop:oc-criterion}, it suffices to show that the schemes $(\mf{Y}\times_{\mf{X}} D_i)_{\rm red}$ are normal. 

This assertion being local, we may assume that $\mf{X}=\Spf(A)$ is affine and that $Y$ is the disjoint union of finite \'etale covers of $X=\Spa(A_K)$. Working one connected component of $Y$ at a time, we may assume that $Y\to X$ is finite \'etale, and therefore $\mf{Y}=\op{Spf}(B)$ where $B=B_K^\circ$, $A\to B$ is finite, and $A_K\to B_K$ is finite \'etale. 

Since $\Spf(A)$ and $\Spec(A)$ have the same completed local rings at closed points, and the same special fiber, the scheme $\Spec(A)$ is regular and the divisor $V(\varpi)_{\rm red} = D_1\cup \ldots \cup D_r$ has strict normal crossings. By Lemma~\ref{lem:normalization-tame}, the map of schemes $\Spec(B_K)\to \Spec(A)$ is tamely ramified along the $D_i$. Therefore, by Corollary~\ref{cor:log-abhy}, the schemes $T_i=(\Spec(B)\times_{\Spec(A)} D_i)_{\rm red}$ are normal. Since the $D_i$ reside in the special fiber, we have $T_i = \mf{Y}\times_{\mf{X}} D_i$, and the theorem is proved.
\end{proof}

\begin{rem}
In the language of logarithmic geometry, the proof of Theorem~\ref{thm:tame-comparison} gives the following result. If $\mf{X}$ is a gss formal scheme over $\cO_K$ with rigid generic fiber $X=\mf{X}_\eta$ and if $Y\to X$ is a tame finite \'etale covering, then $Y$ extends uniquely to a Kummer \'etale morphism $\mf{Y}\to\mf{X}$ where $\mf{Y}$ and $\mf{X}$ are endowed with the natural log structures induced by $\mf{Y}_k$ and $\mf{X}_k$. Thus the category $\FEt_X$ is equivalent to the category of finite and Kummer \'etale coverings of $\mf{X}$ treated as a log formal scheme.
\end{rem}

We finally extend our result to the fully generality by showing that every tame $\et$-covering space is a de Jong covering space. We shall essentially use a bootstrap argument to reduce to Theorem \ref{thm:tame-comparison}.

\begin{cor} \label{cor:tame-comparison}
    Let $X$ be a smooth quasi-paracompact and quasi-separated rigid $K$-space and suppose that the following conditions hold:
    \begin{enumerate}[(1)]
        \item $X$ has cofinal semistable models,
        \item every finite \'etale covering of an affinoid open of $X$ has cofinal semistable models.
    \end{enumerate}
    Then every tame $\et$-covering space of $X$ is a de Jong covering space.
\end{cor}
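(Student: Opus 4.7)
The plan is to reduce to Theorem~\ref{thm:tame-comparison} applied to $X$ itself (using hypothesis~(1)) by showing that any tame $\et$-covering $Y\to X$ is already an $\adm$-covering space. The mechanism is to convert the splitting \'etale cover into an admissible one by pulling back along finite \'etale maps to affinoid opens of $X$, applying Theorem~\ref{thm:tame-comparison} on each such pullback (via hypothesis~(2)), and then descending back to $X$.

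More precisely, I would first invoke the local structure theorem for quasi-compact \'etale morphisms of rigid $K$-spaces to refine the splitting \'etale cover $\{U_i\to X\}$: after refinement, each $U_i\to X$ factors locally on $X$ as an open immersion $U_i\hookrightarrow V'_i$ into a finite \'etale cover $V'_i\to W_i$ of an affinoid open $W_i\subseteq X$. Using quasi-compactness, I would extract an admissible affinoid refinement $\{W_j\}$ of $\{W_i\}$, and for each $W_j$ a finite \'etale surjection $V_j\to W_j$ (formed as a fibre product or Galois closure of finitely many of the relevant $V'_i\to W_j$) such that the pullbacks of the $U_i$'s to $V_j$ form an admissible open cover of $V_j$. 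The resulting pullback $Y|_{V_j}\to V_j$ is then a tame $\adm$-covering space.

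By hypothesis~(2), each $V_j$ is a finite \'etale covering of an affinoid open of $X$ and hence has cofinal generalized strictly semistable models, so Theorem~\ref{thm:tame-comparison} applies on $V_j$ and upgrades $Y|_{V_j}$ to a de Jong covering. I would then descend along the finite \'etale surjection $V_j\to W_j$ using finite \'etale descent for de Jong covering spaces. This can be argued either directly, by replacing an overconvergent splitting cover of $V_j$ with a Galois-equivariant refinement with respect to a Galois closure of $V_j\to W_j$ (using that finite unions and intersections preserve overconvergence) and passing to the quotient on $W_j$, or by invoking the \'etale bootstrap formalism of \cite[\S 2.8]{ALY1}, which is designed precisely for such passages. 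Either way, $Y|_{W_j}\to W_j$ is a de Jong covering, in particular an $\adm$-covering. Combining over the admissible cover $\{W_j\}$ of $X$ exhibits $Y\to X$ as an $\adm$-covering, and Theorem~\ref{thm:tame-comparison} with hypothesis~(1) finishes the argument.

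The main obstacle will be the finite \'etale descent of de Jong covering spaces in the penultimate step: overconvergence is an overconvergent-local condition that does not behave transparently under finite \'etale pullback, so reconstructing an overconvergent splitting cover of $W_j$ from one of $V_j$ requires care. A secondary subtlety is choosing the finite \'etale cover $V_j\to W_j$ large enough that the pullbacks of finitely many $U_i$'s genuinely give an admissible cover of $V_j$, and not merely an \'etale one; both difficulties are handled cleanly by the bootstrap formalism of \cite[\S 2.8]{ALY1}.
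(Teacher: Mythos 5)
Your proposal is correct and follows essentially the same route as the paper: refine the splitting \'etale cover into (open cover) $\circ$ (finite \'etale) $\circ$ (admissible cover) via the formalism of \cite[\S 2.8]{ALY1}, apply Theorem~\ref{thm:tame-comparison} on the finite \'etale covers of affinoid opens using hypothesis~(2), descend, and conclude with hypothesis~(1). The finite \'etale descent step you flag as the main obstacle is exactly \cite[Lemma~2.3]{deJongFundamental}, which is what the paper invokes there.
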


\begin{proof}
Let $Y\to X$ be a tame $\et$-covering space, and let $U\to X$ be an \'etale cover such that $Y_U$ is the disjoint union of finite \'etale coverings. By \cite[Proposition~2.8.2]{ALY1}, the cover $\{U\to X\}$ admits as a refinement a cover of the form $\{V_{ij}\to X\}_{i\in I,j\in J_i}$ such that for all $i\in I$ and $j\in J_i$ one has a factorization
\[
     V_{ij}\to W_i\to U_i\to X 
\]
where $\{U_i\to X\}_{i\in I}$ forms an open cover, the maps $W_i\to U_i$ are finite \'etale, and for every $i\in I$, the family $\{V_{ij}\to W_i\}_{j\in I_j}$ is a finite affinoid open cover. Moreover, refining further we may assume that $\{U_i\to X\}_{i\in I}$ is an affinoid cover of finite type. Now, each $Y_{W_i}\to W_i$ is a tame $\adm$-covering space, and by assumption (2) and Theorem~\ref{thm:tame-comparison}, we see that it is a de Jong covering space. Therefore, by \cite[Lemma~2.3]{deJongFundamental} also each $Y_{U_i}\to U_i$ is a de Jong covering space. Thus $Y\to X$ is an $\adm$-covering space, and we conclude using assumption (1) and Theorem~\ref{thm:tame-comparison}.
\end{proof}

As remarked before all smooth quasi-paracompact and quasi-separated rigid $K$-varieties, where $K$ has equal characteristic zero, have a cofinal gss formal models. Moreover, since tameness is automatic when the residue field $k$ is of characteristic zero, we deduce the following result.

\begin{cor} \label{cor:tame-char0} 
    Let $K$ be a complete discrete valuation field of residue characteristic zero, and let $X$ be a smooth quasi-paracompact and quasi-separated rigid-analytic space over $K$. Then every $\et$-covering space of $X$ is a de Jong covering space.
\end{cor}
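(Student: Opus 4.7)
The strategy is to verify that both hypotheses of Corollary~\ref{cor:tame-comparison} are met, and to observe that the tameness hypothesis is vacuous in this setting, so that the conclusion follows directly.

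First I would note that since $K$ is a complete discretely valued field whose residue field $k$ has characteristic zero, we automatically have $\op{char}(K)=0$, so that $K$ is of equal characteristic zero. This is the setting in which the results of Temkin \cite{TemkinDesingularization} referenced in the discussion following Definition~\ref{def:semistable-cofinal} apply: every smooth quasi-paracompact and quasi-separated rigid $K$-space admits cofinal generalized strictly semistable formal models (with $K'=K$). Applied to $X$, this verifies hypothesis (1) of Corollary~\ref{cor:tame-comparison}.

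For hypothesis (2), let $U\subseteq X$ be an affinoid open and let $W\to U$ be a finite \'etale covering. Then $U$ is smooth, quasi-compact and quasi-separated as an open subspace of $X$, so $W$ is smooth over $K$ (as finite \'etale maps preserve smoothness) and inherits the property of being quasi-paracompact and quasi-separated. Hence the same result of Temkin applies to $W$, giving the needed cofinality of gss formal models.

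Finally, since $k$ has characteristic zero, every finite extension of valued fields of the form $k(f(y)) \subseteq k(y)$ arising from an \'etale morphism $f\colon Y \to X$ of rigid $K$-spaces is automatically tame: the residue characteristic obstruction to tameness is empty. Thus every $\et$-covering space $Y \to X$ is tame in the sense of Definition~\ref{tame def}. With both cofinality conditions verified and tameness automatic, Corollary~\ref{cor:tame-comparison} applies and gives the desired conclusion that $Y \to X$ is a de Jong covering space. There is no genuine obstacle here beyond unpacking the hypotheses; the real content lies upstream, in the proofs of Theorem~\ref{thm:tame-comparison} and Corollary~\ref{cor:tame-comparison} and in Temkin's desingularization.
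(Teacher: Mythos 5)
Your proposal is correct and follows essentially the same route as the paper: the paper likewise deduces the corollary from Corollary~\ref{cor:tame-comparison} by observing that equal characteristic zero (which, as you note, follows from the residue characteristic being zero) gives cofinality of gss formal models via Temkin's desingularization for $X$ and for finite \'etale coverings of its affinoid opens, and that tameness is automatic when $k$ has characteristic zero.
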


\appendix

\section{\texorpdfstring{$\eta$-normality}{eta-normality}}
\label{app:eta-normal}

The rigid generic fiber of a connected admissible formal scheme over $\cO_K$ may be disconnected. In order to control its connectedness, we introduce in this appendix the condition of $\eta$-normality, i.e.\ being ``integrally closed in the rigid generic fiber.'' To be able to apply it to questions concerning covering spaces, we need to show that if $\mf{X}'\to \mf{X}$ is \'etale and $\mf{X}$ is $\eta$-normal, then so is $\mf{X}'$ (Proposition~\ref{prop:etale-eta-normal}). To this end, we provide a version of Serre's criterion ``normal $=(S_2)+(R_1)$'' (Proposition~\ref{prop:serre-crit}). The reader should be aware that many of the proofs below are considerably easier in the situation when $K$ is discretely valued.

\subsection{\texorpdfstring{$\eta$-normality}{eta-normality}} 

We start with a definition of $\eta$-normality (cf.\@ \cite[Remark 7.4.2]{deJongCrystal}).

\begin{definition} \label{def:eta-normal}
	An $\cO_K$-algebra $A$ topologically of finite type over $\cO_K$ is called \emph{$\eta$-normal} if it is $\cO_K$-torsion free and integrally closed in $A_K$. A formal scheme $\mf{X}$ locally of finite type over $\cO_K$ is called \emph{$\eta$-normal} if for every affine open $\Spf(A)\subseteq \mf{X}$, the algebra $A$ is $\eta$-normal.
\end{definition}

In other words, $A$ is $\eta$-normal if $A=A_K^\circ$. In particular, $A$ is reduced since if $f$ is nilpotent, then $K\cdot f\subseteq A_K^\circ$ and $A_K^\circ$ is not bounded which contradicts that it is equal to $A$. One can rephrase the condition that $\mf{X}$ is $\eta$-normal by saying that the map of sheaves $\cO_\mf{X}\to \op{sp}_{\mf{X},*}\, \cO_{\mf{X}_\eta}^+$ is an isomorphism. In particular, the property of being $\eta$-normal can be checked on a single affine open cover of $\mf{X}$.

The following lemma gives common conditions which guarantee $\eta$-normality.

\begin{lem} \label{lem:eta-normal-basic-criteria}
    Suppose that one of the following conditions holds:
    \begin{enumerate}[(a)]
        \item $\mf{X}_k$ is reduced,
        \item $\mf{X}$ is formally normal (i.e.\@ $\h_{\mf{X},x}$ is normal for all $x$ in $\mf{X})$.
    \end{enumerate}
    Then $\mf{X}$ is $\eta$-normal.
\end{lem}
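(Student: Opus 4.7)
The plan is to work locally on $\mf{X}$ and prove that $A = A_K^\circ$ for every affine open $\Spf(A) \subseteq \mf{X}$. I will assume admissibility of $\mf{X}$ implicitly --- in this setting $A$ is automatically $\cO_K$-torsion-free, so $A \subseteq A_K^\circ$ holds for free, and by the identification recalled in \S\ref{ss:generic-fiber} that $A_K^\circ$ is the integral closure of $A$ in $A_K$, it suffices in each case to show that every $f \in A_K$ integral over $A$ already lies in $A$.

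For case (a), I would take such an $f$ and write $f = b/\varpi^k$ with $b \in A$ and $k \geq 0$ minimal. Clearing denominators in a monic integrality relation $f^n + a_1 f^{n-1} + \cdots + a_n = 0$ with $a_i \in A$ gives
\[
    b^n + \varpi^k a_1 b^{n-1} + \cdots + \varpi^{nk} a_n = 0,
\]
so if $k \geq 1$ then $\bar{b}^n = 0$ in $A_k$, whence $\bar{b} = 0$ by reducedness and $b = \varpi b'$ for some $b' \in A$ (using torsion-freeness). Then $f = b'/\varpi^{k-1}$ contradicts the minimality of $k$, so $k = 0$ and $f \in A$ as required.

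For case (b), formal normality means that each stalk $\cO_{\mf{X}, x}$ is a normal local ring. Since normality is a local property, $A$ itself is a normal ring and therefore integrally closed in its total ring of fractions $\op{Tot}(A)$. Admissibility makes $\varpi$ a non-zero-divisor on $A$, so $A_K = A[\varpi^{-1}]$ embeds into $\op{Tot}(A)$; any $f \in A_K^\circ$ is then an element of $\op{Tot}(A)$ integral over $A$, and hence lies in $A$ by normality. The point I expect to deserve the most care is the $\cO_K$-torsion-freeness clause in the definition of $\eta$-normality, which is not formally implied by either hypothesis in isolation; the ring-theoretic content of each case itself is then quite short.
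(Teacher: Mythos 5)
Both halves of your argument have a real gap, and in both cases the gap is exactly the content of the paper's proof.

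For (a), your clearing-of-denominators induction is the standard argument when $K$ is \emph{discretely} valued, but the lemma is stated (and used) over an arbitrary rank-one valued $K$; the appendix explicitly warns that its proofs are ``considerably easier'' in the discrete case. The step that fails is the descent: from $\bar b^{\,n}=0$ in $A_k$ and reducedness you only get $b\in \mf{m}_K A$, and when the valuation is not discrete $\mf{m}_K$ is not generated by $\varpi$, so $b\in\mf{m}_K A$ does not yield $b=\varpi b'$ and the induction on $k$ does not close. (Trying to repair it via the supremum seminorm only recovers $|f|_{\sup}\leqslant 1$, i.e.\ power-boundedness, which is what you started from.) This is precisely why the paper's proof of (a) is a citation of \cite[Proposition~1.1]{BLRIV}, a genuinely nontrivial statement in the non-discrete case, rather than a two-line computation. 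Your argument is fine as a proof of (a) under the additional hypothesis that $K$ is discretely valued.

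For (b), the assertion ``normality is a local property, so $A$ is normal'' conflates two different families of local rings. The hypothesis concerns the stalks $\h_{\mf{X},x}$ of the \emph{formal} scheme, i.e.\ the completed localizations $\varinjlim_f A\langle f^{-1}\rangle$ indexed by the \emph{open} primes of $A$; these are neither the Zariski local rings $A_{\mf{p}}$ of $\Spec(A)$, nor indexed by all primes. The paper's proof does real work to bridge this: since $A$ is $\varpi$-adically complete, $\varpi$ lies in the Jacobson radical, so every maximal ideal of $A$ is open and corresponds to a point $x$ of $\Spf(A)$; the natural map $A_{\mf{m}}\to\h_{\mf{X},x}$ is flat by Gabber's theorem (flatness of $A[f^{-1}]\to A\langle f^{-1}\rangle$), hence faithfully flat because it is local, and normality descends along it (\stacks{033G}). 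One then still needs $\Spec(A)$ connected and topologically Noetherian to pass from ``all local rings at maximal ideals are normal domains'' to ``$A$ is a normal domain, hence integrally closed in $A_K$'' (\stacks{030B}, \stacks{0357}). Without these steps your conclusion that $A$ is integrally closed in its total ring of fractions is unsupported. Your flagging of the torsion-freeness clause is reasonable: the lemma should indeed be read with $\mf{X}$ admissible, as with the rest of the appendix.
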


\begin{proof}
For (a) we may assume that $\mf{X}$ is affine, in which case the claim follows from \cite[Proposition 1.1]{BLRIV}. For (b) it suffices to show that if $\Spf(A)$ is formally normal and connected, then $A$ is a normal domain. Since $\Spec(A)$ is connected and topologically Noetherian, we know by \stacks{0357} that it suffices to show that $\Spec(A)$ is a normal scheme. But, by \stacks{030B} it suffices to show that for every maximal ideal $\mf{m}$ of $A$ that $A_\mf{m}$ is a normal domain. Note though that since $A$ is $\varpi$-adically complete, that $\varpi$ is contained in the Jacobson radical of $A$ (see \cite[Chapter 0, Lemma 7.2.13]{FujiwaraKato}), and so $\mf{m}\supseteq \sqrt{(\varpi)}A=\mf{m}_KA$. Thus, we see that $\mf{m}$ corresponds to a point $x$ of $\Spf(A)$. Moreover, we have a natural map $A_\mf{m}\to \h_{\mf{X},x}$ obtained as the colimit of the natural maps $A[f^{-1}]\to A\langle f^{-1}\rangle$. By Gabber's theorem (see \cite[Lemma 8.2/2]{BoschLectures}) each map $A[f^{-1}]\to A\langle f^{-1}\rangle$ is flat, and thus is the map $A_\mf{m}\to\h_{\mf{X},x}$. This map is local (cf.\@ \cite[Remark 7.2/1]{BoschLectures}) and thus faithfully flat by \stacks{00HQ}. Thus, $A_\mf{m}$ is normal by \stacks{033G}.
\end{proof}

For us, the main use for $\eta$-normality is the fact that connectedness is preserved when passing to the rigid generic fiber.

\begin{lem}\label{lem:connected generic fiber} 
    Let $\mf{X}$ be formal scheme locally of finite type over $\cO_K$ which is $\eta$-normal. If $\mf{X}$ is connected, then so is $\mf{X}_\eta$. 
\end{lem}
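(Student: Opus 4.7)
The plan is to prove the contrapositive: if $\mf{X}_\eta$ is disconnected, then $\mf{X}$ is disconnected. The approach uses the standard translation of connectedness into the (non-)existence of non-trivial global idempotents, together with the $\eta$-normality hypothesis to transport an idempotent from $\mf{X}_\eta$ to $\mf{X}$.

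The first step is to upgrade $\eta$-normality from its affine-level definition to the sheaf-level identification $\h_\mf{X}\isomto \op{sp}_{\mf{X},*}\h_{\mf{X}_\eta}^+$ on $|\mf{X}|$. Both sides are sheaves of rings, and on each affine open $\Spf(A)\subseteq\mf{X}$ they both compute to $A = A_K^\circ$ by definition, so agreement on a single affine cover forces the isomorphism. Taking global sections yields the ring isomorphism
\[
\Gamma(\mf{X},\h_\mf{X}) \isomto \Gamma(\mf{X}_\eta,\h_{\mf{X}_\eta}^+).
\]
Since $\mf{X}$ is connected and hence non-empty, any affine open $\Spf(A)\subseteq\mf{X}$ with $A\neq 0$ has $A_K\neq 0$ (by the $\h_K$-torsion freeness built into $\eta$-normality), so $\mf{X}_\eta$ is non-empty. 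A hypothetical disconnection of $\mf{X}_\eta$ must therefore have the form $\mf{X}_\eta = U\sqcup V$ with both $U$ and $V$ non-empty, and the sheaf axiom produces a non-trivial idempotent $e = (1,0)$ in $\Gamma(U,\h_{\mf{X}_\eta}^+)\times \Gamma(V,\h_{\mf{X}_\eta}^+) = \Gamma(\mf{X}_\eta,\h_{\mf{X}_\eta}^+)$. Via the isomorphism above this transports to a non-trivial idempotent $\tilde e\in \Gamma(\mf{X},\h_\mf{X})$.

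To conclude, I would use that a non-trivial global idempotent disconnects $\mf{X}$. At every point $x\in\mf{X}$ the stalk $\tilde e_x$ is an idempotent in the local ring $\h_{\mf{X},x}$ and therefore equals $0$ or $1$ (an idempotent in a local ring satisfies $e(1-e) = 0$ with exactly one of $e$, $1-e$ lying in the maximal ideal, forcing $e\in\{0,1\}$). Hence $|\mf{X}|$ is the disjoint union of the two open sets $\{\tilde e = 0\}$ and $\{\tilde e = 1\}$, both non-empty because $\tilde e$ is neither the zero nor the unit section. This contradicts connectedness of $\mf{X}$. The only nontrivial bookkeeping is the sheaf-level reformulation of $\eta$-normality in the first step; no substantial obstacle arises.
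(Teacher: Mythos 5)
Your argument is correct and is essentially the paper's proof: both rest on the sheaf-level reformulation of $\eta$-normality as $\h_\mf{X}\isomto \op{sp}_{\mf{X},*}\h_{\mf{X}_\eta}^+$, pass to global sections, and match idempotents of $\Gamma(\mf{X}_\eta,\h^+_{\mf{X}_\eta})$ with those of $\Gamma(\mf{X},\h_\mf{X})$. You merely phrase it contrapositively and spell out the standard idempotent/stalk bookkeeping that the paper leaves implicit.
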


\begin{proof}
By assumption, the map $\cO_{\mf{X}} \to \op{sp}_{\mf{X},*}\, \cO_{\mf{X}_\eta}^+$ is an isomorphism. Since every idempotent $\Gamma(\mf{X}_\eta, \cO_{\mf{X}_\eta})$ belongs to $\Gamma(\mf{X}_\eta, \cO_{\mf{X}_\eta}^+)=\Gamma(\mf{X}, \cO_{\mf{X}})$, we deduce that $\Gamma(\mf{X}_\eta, \cO_{\mf{X}_\eta})$ has no nontrivial idempotents, so $\mf{X}_\eta$ is connected.
\end{proof}

\subsection{\texorpdfstring{Conditions $(S'_1)$ and $(S'_2)$}{Conditions S'1 and S'2}} 

In the absence of a robust theory of depth and embedded components in the non-Noetherian setting, we use the following condition as a substitute for the condition $(S_1)$. 

\begin{definition} \label{def:s1-prime}
    Let $X$ be a locally topologically Noetherian scheme. We say that $X$ satisfies \emph{condition $(S'_1)$} if every dense open subscheme $U\subseteq X$ is schematically dense (i.e.\@, $\cO_X\to j_* \cO_U$ is injective, where $j\colon U\to X$ is the inclusion).
\end{definition}

\begin{lem} \label{lem:s1-prime-conditions}
    Let $X$ be a locally topologically Noetherian scheme.
    \begin{enumerate}[(a)]
        \item If $X=\Spec(A)$ is affine, then $X$ satisfies $(S_1')$ if and only if every zerodivisor of $A$ belongs to at least one minimal prime of $A$. 
        \item The scheme $X$ satisfies condition $(S_1')$ if and only if for one (equiv.\@ for all) open covers $\{U_i\}$ of $X$ each $U_i$ satisfies condition $(S_1')$. 
        \item If $X$ is locally Noetherian, then $X$ satisfies $(S'_1)$ if and only if it has no embedded points, or equivalently if it satisfies condition $(S_1)$.
    \end{enumerate}
\end{lem}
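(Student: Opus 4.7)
The plan is to handle the three parts independently, each building on standard commutative-algebra inputs.

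Part (a) amounts to translating the sheaf-theoretic condition on dense opens into ring-theoretic language. The first step is to note that since $X = \Spec(A)$ is topologically Noetherian, every dense open is quasi-compact and hence of the form $D(I)$ for a finitely generated ideal $I$ not contained in any minimal prime of $A$. The forward implication is then direct: a zerodivisor $f$ not in any minimal prime yields the dense open $D(f)$, and any $a \neq 0$ with $fa = 0$ lies in the kernel of $A \to A_f$, violating $(S'_1)$. For the converse, if $a \in A$ restricts to zero on $D(I)$, then $I^N a = 0$ for some $N$; assuming $a \neq 0$, every element of $I^N$ is a zerodivisor and hence lies in some minimal prime by hypothesis. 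Since minimal primes correspond bijectively to irreducible components of $\Spec(A)$ there are only finitely many, and prime avoidance then forces $I^N$, and thus $I$, to lie in a single minimal prime, contradicting density.

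For part (b) the key observation is that both the density of an open subset and the injectivity of a sheaf map are local properties on the target. In the easy direction, for any open $V_i \subseteq X$ and any dense open $U \subseteq X$, the intersection $U \cap V_i$ is dense in $V_i$, so $(S'_1)$ on $X$ directly restricts to $(S'_1)$ on each $V_i$. The converse requires a small extension trick: given a dense open $W \subseteq V_i$, I would use that $W' := W \cup (X \setminus \overline{V_i})$ is a dense open in $X$, the point being that $W$ is dense in $V_i$ hence in $\overline{V_i}$, so its $X$-closure contains $\overline{V_i}$, and adding $X \setminus \overline{V_i}$ then exhausts $X$. Restricting the injectivity of $\cO_X \to j_* \cO_{W'}$ back to $V_i$ (where it recovers $\cO_{V_i} \to j_{W,*}\cO_W$, since $W'\cap V_i = W$) gives the desired property on $V_i$.

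For part (c), by (b) the problem reduces to the affine case $X = \Spec(A)$ with $A$ Noetherian. In this setting the set of zerodivisors of $A$ equals $\bigcup_{\mathfrak{p} \in \op{Ass}(A)} \mathfrak{p}$, the set $\op{Ass}(A)$ is finite, and minimal primes are associated. The condition from (a) thus becomes $\op{Ass}(A) \subseteq \bigcup_{\mathfrak{q} \in \op{Min}(A)} \mathfrak{q}$, and prime avoidance upgrades this to $\op{Ass}(A) \subseteq \op{Min}(A)$, i.e., no embedded associated primes, which is the standard Noetherian formulation of both ``no embedded points'' and of $(S_1)$. The main conceptual obstacle across the lemma is the mismatch between the weak topological Noetherianity hypothesis (which is what one really has for formal schemes over arbitrary rank-one valuation rings) and the non-Noetherian rings that can appear in the affine charts; the two ingredients that nonetheless save the arguments in (a) and (b) are the finiteness of minimal primes (equivalently, irreducible components) and the quasi-compactness of all opens, both of which follow from the topological hypothesis alone.
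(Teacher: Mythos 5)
Your proof is correct and follows essentially the same route as the paper's: prime avoidance among the finitely many minimal primes for (a) (the paper first shrinks the dense open to a principal one, you instead apply avoidance to $I^N$ — same argument, reordered), extension of a dense open of an open subscheme to a dense open of $X$ for (b) (you use $W\cup(X\setminus\overline{V_i})$, the paper the maximal such extension), and the identification with the absence of embedded associated primes for (c), which the paper simply cites from the Stacks Project. One wording slip in (b): the density of $U\cap V_i$ in $V_i$ is the ingredient for the direction ``each $V_i$ satisfies $(S_1')$ implies $X$ does'' (together with locality of injectivity of sheaf maps), not for restricting $(S_1')$ from $X$ to $V_i$ — but since you separately supply the extension trick for the latter, both implications are in fact covered.
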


\begin{proof}
(a) Let us first note that an open of the form $D(g)$, $g\in A$ is dense in $X=\Spec(A)$ if and only if $g$ belongs to no minimal prime of $A$. To see that $(S'_1)$ implies the given condition, take $U=D(g)$ and note that the equation $gf=0$ implies that $f|_U=0$, and hence $f=0$. 

Conversely, assume the condition holds and let $U\subseteq X$ be a dense open. We first claim that $U$ contains a dense open of the form $D(g)$ for some $g\in A$. Indeed, writing $X\setminus U=V(I)$ for some ideal $I\subseteq A$, and denoting by $\mf{p}_1, \ldots, \mf{p}_r$ the minimal primes of $A$, the condition that $U$ is dense means that $I\not\subseteq \mf{p}_i$ for all $i$. By prime avoidance \stacks{00DS} there exists a $g\in I$ with $g\notin \mf{p}_i$ for all $i$. Then $D(g)\subseteq U$ and $D(g)$ is dense. Thus, we may assume that $U=D(g)$. 

Now, the inclusion $j\colon U\to X$ is affine, and hence $\cO_X\to j_*\cO_U$ is injective if and only if $A\to A[\frac 1 g]$ is injective. Its kernel consists of elements $f\in A$ such that $g^n f =0$ for some $n\geqslant 1$. Since the powers $g^n$ belong to no minimal prime of $A$, the given condition implies that $f=0$. 

(b) If $X=\bigcup U_i$ and the $U_i$ satisfy $(S'_1)$, then so does $X$. For the converse, it suffices to show that if $X$ satisfies $(S'_1)$, then every open $X'\subseteq X$ satisfies $(S'_1)$. If $U'\subseteq X'$ is a dense open, then let $U$ be the union of all opens of $X$ whose intersection with $X'$ equals $U'$. Then $U$ is a dense open of $X$, so $\cO_X\to j_*\cO_U$ is injective, and so is its restriction to $X'$.

(c) Combine \stacks{083P} and \stacks{0346}. 
\end{proof}

\begin{lem} \label{lem:etale-s1-prime}
    Let $Y\to X$ be an \'etale morphism of locally topologically Noetherian schemes\footnote{N.B.\ If $Y\to X$ is \'etale and $X$ is locally topologically Noetherian, then so is $Y$, by \cite[Lemma~6.6.10(3)]{BhattScholze}.} If $X$ satisfies $(S_1')$, then so does $Y$.
\end{lem}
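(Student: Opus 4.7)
The plan is to reduce to the affine case and verify the zerodivisor criterion of Lemma~\ref{lem:s1-prime-conditions}(a). By part (b) of that lemma, condition $(S'_1)$ is local on both source and target, so I may assume $X = \Spec(A)$ and $Y = \Spec(B)$ with $A \to B$ étale. Since $\Spec(A)$ is locally topologically Noetherian and affine (hence quasi-compact), it is topologically Noetherian, and in particular $A$ has only finitely many minimal primes $\mf{p}_1,\ldots,\mf{p}_r$.

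The key device is to invert the multiplicative set $S := A \setminus \bigcup_{i=1}^r \mf{p}_i$. By prime avoidance, $\Spec(S^{-1}A) = \{\mf{p}_1,\ldots,\mf{p}_r\}$ is a finite zero-dimensional scheme. The $(S'_1)$ hypothesis on $A$ forces every zerodivisor to lie in some minimal prime, so elements of $S$ are non-zerodivisors and $A \hookrightarrow S^{-1}A$. Flatness of $A \to B$ then preserves this injection after tensoring with $B$, giving $B \hookrightarrow S^{-1}B := B \otimes_A S^{-1}A$. Next I claim that $\Spec(S^{-1}B)$ is precisely the set of minimal primes of $B$: going-down for the flat morphism $A \to B$ ensures that minimal primes of $B$ lie over minimal primes of $A$, and conversely any prime of $B$ over some $\mf{p}_i$ is itself minimal in $B$, because the fiber $B \otimes_A \kappa(\mf{p}_i)$ is étale over a field, hence a finite product of finite separable field extensions and in particular zero-dimensional.

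With this in hand, given a zerodivisor $b \in B$ witnessed by $b b' = 0$ with $b' \neq 0$, the injection $B \hookrightarrow S^{-1}B$ keeps $b'$ nonzero, so $b$ is a zerodivisor---and in particular a non-unit---in $S^{-1}B$. Thus $b$ lies in some prime of $S^{-1}B$, which corresponds to a minimal prime of $B$ containing $b$. Invoking Lemma~\ref{lem:s1-prime-conditions}(a) then yields that $Y$ satisfies $(S'_1)$. I do not foresee a serious obstacle in this strategy; the only mildly delicate step is the identification of $\Spec(S^{-1}B)$ with the minimal primes of $B$, which rests on standard facts about flat and étale ring extensions that I would either quote or verify in a short paragraph.
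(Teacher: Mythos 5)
Your proof is correct, and it takes a genuinely different route from the one in the paper. The paper's argument localizes until $Y\to X$ is standard \'etale, embeds $Y$ as an open subscheme of $Z=\Spec(A[T]/(f))$ with $f$ monic, and then exploits the fact that $C=A[T]/(f)$ is finite free over $A$: a section vanishing on a dense open of $Z$ has coefficients vanishing on a dense open of $X$ (the dense open downstairs being produced via openness of finite locally free, hence flat and finitely presented, morphisms), so the $(S'_1)$ property of $X$ kills the coefficients. Your argument instead stays with the zerodivisor reformulation of Lemma~\ref{lem:s1-prime-conditions}(a) throughout and passes to the localization $S^{-1}A$ at the complement of the union of the minimal primes; the two inputs are flatness of $A\to B$ (to get $B\hookrightarrow S^{-1}B$ from $A\hookrightarrow S^{-1}A$) and zero-dimensionality of the fibers over the minimal primes of $A$ (to identify $\Spec(S^{-1}B)$ with the minimal primes of $B$ --- note that for the final step you only need the direction asserting that every prime of $S^{-1}B$ contracts to a minimal prime of $B$; the going-down half is not actually used). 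All the steps check out: the reduction to the affine case via Lemma~\ref{lem:s1-prime-conditions}(b) is legitimate, finiteness of the set of minimal primes follows from topological Noetherianity, and the fiber $B\otimes_A\kappa(\mf{p}_i)$ is indeed a finite product of finite separable field extensions since \'etale ring maps are of finite presentation. What your approach buys is generality and transparency: it applies verbatim to any flat morphism whose fibers over the generic points of $X$ are zero-dimensional (e.g.\ flat and quasi-finite), without invoking the local structure theorem for \'etale morphisms; the paper's proof is more computational but self-contained at the level of explicit presentations.
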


\begin{proof}
Since the condition is local on $Y$, we may assume that $X$ and $Y$ are affine and $f$ is standard \'etale, i.e.
\[ 
    X=\Spec(A), \qquad Y=\Spec(B), \qquad B=(A[T]/(f))_g
\]
where $f\in A[T]$ is a monic polynomial whose derivative $f'$ is invertible in $B$ (see \stacks{02GT}). Let $C=A[T]/(f)$ and let $Z=\Spec(C)$, so that $Y$ is an open subset of $Z$. Since $f$ is monic, $C$ is a free $A$-module of rank $\deg(f)$, and hence the morphism $\varphi\colon Z\to X$ is finite and locally free and therefore locally of finite presentation (see \stacks{02KB}).

Since $Y$ is an open in $Z$, it suffices to show that $Z$ satisfies $(S_1')$. To this end, suppose that $W\subseteq Z$ is a dense open and $x\in C$ is an element whose restriction to $W$ is zero. We have to show that $x=0$. Let $U=X\setminus \varphi(Z\setminus W)$; this is an open subset because $\varphi$ is a closed map, and $\varphi^{-1}(U)$ is contained in $W$. We claim that $\varphi^{-1}(U)$ is also dense in $Z$; otherwise there exists a non-empty open $V\subseteq Z$ with $\varphi(V)$ contained in $\varphi(Z\setminus W)$. But $\varphi(V)$ is open (since $\varphi$ is flat and finitely presented, see \stacks{01UA}) and so contains a generic point $\xi$. That said, the fiber $\varphi^{-1}(\xi)$ consists entirely of generic points (cf.\@ \stacks{0ECG}) which would imply that $Z\setminus W$ contains a generic point, which is impossible.

Writing $x=\sum_{i<\deg(f)} a_i T^i$ with $a_i\in A$, if $x$ vanishes on $W$, it vanishes on $\varphi^{-1}(U)$, and then the coefficients $a_i$ vanish on the dense open $U\subseteq X$. Since $X$ satisfies $(S'_1)$, we must have $a_i=0$ for all $i$, and hence $x=0$, as desired.
\end{proof}

\begin{prop}\label{prop:s1'-diff-pu}
    Let $A$ be an admissible $\h_K$-algebra. Then the following conditions are equivalent:
    \begin{enumerate}[(a)]
        \item For some pseudouniformizer $\varpi$, the scheme $\Spec(A/\varpi A)$ satisfies $(S'_1)$.
        \item For every pseudouniformizer $\varpi$, the scheme $\Spec(A/\varpi A)$ satisfies $(S'_1)$.
    \end{enumerate}
\end{prop}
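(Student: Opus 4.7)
The strategy is to reformulate $(S'_1)$ via Lemma~\ref{lem:s1-prime-conditions}(a) and then compare the two pseudouniformizers by passing through a common power. The key observation is that the minimal primes of $A/\varpi A$, viewed as primes of $A$, do not depend on $\varpi$: they are exactly the minimal primes of $A$ containing the ideal $A^{\cc}$ of topologically nilpotent elements (equivalently, the generic points of the irreducible components of $\underline{\Spf(A)}$). Thus the only $\varpi$-dependent data is the set
\[
    Z_\varpi := \{ g \in A : gf \in \varpi A \text{ for some } f \in A \setminus \varpi A \},
\]
and condition $(S'_1)$ amounts to $Z_\varpi \subseteq \bigcup_i \mf{p}_i$ for these common minimal primes $\mf{p}_1, \ldots, \mf{p}_r$.

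The first step is to prove the intermediate claim: for every $n \geq 1$, $(S'_1)$ for $A/\varpi A$ is equivalent to $(S'_1)$ for $A/\varpi^n A$. For the forward direction I argue by induction. Given $g$ not in any minimal prime and $gf \in \varpi^n A$, since $gf \in \varpi A$ the hypothesis forces $f = \varpi f_1$; flatness of $A$ over $\cO_K$ (i.e.\@ torsion-freeness) lets me cancel $\varpi$ to obtain $gf_1 \in \varpi^{n-1} A$, and iterating $n$ times gives $f \in \varpi^n A$. Conversely, given $(S'_1)$ for $A/\varpi^n A$ and $gf \in \varpi A$, I multiply by $\varpi^{n-1}$ to get $g(\varpi^{n-1}f) \in \varpi^n A$, apply the hypothesis to get $\varpi^{n-1} f \in \varpi^n A$, and use torsion-freeness to conclude $f \in \varpi A$.

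With this in hand, the general case reduces as follows. Given pseudouniformizers $\varpi_1, \varpi_2$, since $\varpi_1$ is topologically nilpotent in $\cO_K$ I choose $n$ with $\varpi_1^n \in \varpi_2 \cO_K$, so that $\varpi_1^n = \varpi_2 \lambda$ for some nonzero $\lambda \in \cO_K$. Assuming $(S'_1)$ for $A/\varpi_1 A$, the intermediate step upgrades this to $(S'_1)$ for $A/\varpi_1^n A$. Now if $gf \in \varpi_2 A$ with $g \notin \bigcup_i \mf{p}_i$, then $g(\lambda f) \in \lambda \varpi_2 A = \varpi_1^n A$, so by $(S'_1)$ for $A/\varpi_1^n A$ we get $\lambda f \in \varpi_1^n A = \lambda \varpi_2 A$, and torsion-freeness on $\lambda$ yields $f \in \varpi_2 A$; swapping the roles of $\varpi_1$ and $\varpi_2$ gives the converse. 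The essential subtlety, and what dictates this indirect route, is that two arbitrary pseudouniformizers in a rank-one but possibly non-discretely-valued $\cO_K$ need not be comparable under divisibility, so the direct comparison between $A/\varpi_1 A$ and $A/\varpi_2 A$ cannot be made and one must pass through $A/\varpi_1^n A$ as an intermediary.
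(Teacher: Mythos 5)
Your proof is correct and follows essentially the same route as the paper's: both reduce to comparing $\varpi_1$ and $\varpi_2$ through a power $\varpi_1^n$ divisible by $\varpi_2$, and both carry out the transfer by the same cancellation trick using $\cO_K$-torsion-freeness of $A$ (the paper packages this as two claims, about products and about divisors of pseudouniformizers, rather than your single ``powers'' lemma). Your explicit observation that the minimal primes of $A/\varpi A$ are independent of $\varpi$ is a point the paper uses only implicitly, so making it explicit is a small improvement in exposition rather than a genuine divergence.
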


\begin{proof}
It suffices to show the following two claims:
\begin{enumerate}[(1)]
    \item If $\Spec(A/\varpi' A)$ and $\Spec(A/\varpi'' A)$ both satisfy $(S_1')$, then so does $\Spec(A/\varpi A)$, where $\varpi = \varpi'\varpi''$.
    \item If $\Spec(A/\varpi A)$ satisfies $(S_1')$ and $\varpi'$ divides $\varpi$, then $\Spec(A/\varpi' A)$ satisfies $(S_1')$.
\end{enumerate}
For the first claim, suppose that $g\in A/\varpi A$ belongs to no minimal prime and $f\in A/\varpi A$ is such that $gf=0$. Reducing modulo $\varpi'$ and using the $(S1')$ property of $\Spec(A/\varpi' A)$, we see that $f \in \varpi' A/\varpi A$, say $f = \varpi' f'$. Then $\varpi' f' g = 0$ implies (because $A$ is flat over $\cO_K$) that $f' g = \varpi'' h$ for some $h$\footnote{Indeed, $\varpi'f'g=0$ means that $\varpi'f'g=\varpi h$ in $A$. This implies that $\varpi'(f'g-\varpi''h)=0$ in $A$ which, since $A$ has no $\h_K$-torsion, implies that $f'g-\varpi''h=0$ and so $f'g=\varpi''h$ as desired.}. Reducing modulo $\varpi''$ we see that $f'g=0$ in $A/\varpi''A$ and so using the $(S_1')$ property of $\Spec(A/\varpi'')$, we see that $f'=0$ in $A/\varpi'A$ and so $f' = \varpi'' f''$ in $A/\varpi A$, so $f= \varpi'f'=\varpi' \varpi'' f'' = 0$ as desired.

For the second claim, suppose that $g'\in A/\varpi' A$ belongs to no minimal prime and $f'\in A/\varpi' A$ is such that $g'f'=0$. Let $f$ and $g$ be lifts to $A/\varpi A$, so that $gf = \varpi' h$ for some $h$. Write $\varpi= \varpi'\varpi''$, so $f'' := \varpi'' f$ has the property that $gf'' = \varpi''gf =\varpi''\varpi' h =0$. By the $(S_1')$ property of $\Spec(A/\varpi A)$, we thus have $f''=\varpi''f =0$. Since $A$ is flat over $\cO_K$, we must have $f \in \varpi' A$, so $f'=0$. 
\end{proof}

\begin{rem}
We do not know if the conditions in Proposition~\ref{prop:s1'-diff-pu} are further equivalent to the condition that $\Spec(A_k)$ satisfies $(S'_1)$ (or, equivalently, $(S_1)$) in the case when $K$ is not discretely valued. 
\end{rem}

\begin{definition} \label{def:s2-prime}
    Let $\mf{X}$ be a formal scheme locally of finite type over $\cO_K$. We say that $\mf{X}$ satisfies condition $(S'_2)$ if it is flat over $\h_K$ and for every (equiv.\@ any) pseudouniformizer $\varpi$, the scheme $\mf{X}\otimes (\cO_K/\varpi)$ satisfies condition $(S'_1)$.
\end{definition}

The following result follows from Lemma~\ref{lem:etale-s1-prime} and \cite[Chapter I, Proposition 5.3.11]{FujiwaraKato}.

\begin{lem} \label{prop:etale-s2-prime}
    Let $\mf{Y}\to \mf{X}$ be an \'etale morphism of formal schemes locally of finite type over $\cO_K$. If $\mf{X}$ satisfies $(S'_2)$, then so does $\mf{Y}$.
\end{lem}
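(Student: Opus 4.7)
The plan is to verify the two components of the $(S_2')$ condition separately and to observe that each reduces essentially immediately to results already established or cited.

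First I would handle flatness over $\cO_K$. By the definition of étale for formal schemes (see \cite[Chapter I, Definition 5.3.1]{FujiwaraKato}), the morphism $\mf{Y}\to\mf{X}$ is flat, so composing with the structure morphism $\mf{X}\to\Spf(\cO_K)$, which is flat by the hypothesis that $\mf{X}$ satisfies $(S_2')$, we conclude that $\mf{Y}$ is flat over $\cO_K$.

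Next I would handle the $(S_1')$ condition on the mod-$\varpi$ reduction. Fix a pseudouniformizer $\varpi$ and consider the induced morphism of schemes $\mf{Y}(\varpi)\to\mf{X}(\varpi)$, where $\mf{X}(\varpi)=(\mf{X},\cO_{\mf{X}}/\varpi\cO_{\mf{X}})$ and similarly for $\mf{Y}(\varpi)$. The cited result \cite[Chapter I, Proposition 5.3.11]{FujiwaraKato} guarantees that since $\mf{Y}\to\mf{X}$ is étale (in particular, adically of finite presentation in the sense of the paper), the reduction $\mf{Y}(\varpi)\to\mf{X}(\varpi)$ is an étale morphism of schemes. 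Moreover, both $\mf{X}(\varpi)$ and $\mf{Y}(\varpi)$ are schemes locally of finite type over $\cO_K/\varpi$, hence locally topologically Noetherian (the underlying spaces agree with those of their reductions, which are locally of finite type over $k$). Since $\mf{X}$ satisfies $(S_2')$, $\mf{X}(\varpi)$ satisfies $(S_1')$, and Lemma \ref{lem:etale-s1-prime} then transfers this property along the étale map to $\mf{Y}(\varpi)$.

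There is essentially no obstacle: the substance of the argument is packaged in the cited proposition from \cite{FujiwaraKato} (that étaleness of formal schemes restricts to étaleness of the closed subschemes cut out by an ideal of definition) and in Lemma \ref{lem:etale-s1-prime} (that the $(S_1')$ property ascends along étale morphisms of locally topologically Noetherian schemes). What little there is to check is that the hypotheses of these two inputs are satisfied, which follows immediately from the setup.
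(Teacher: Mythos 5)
Your proof is correct and follows exactly the route the paper takes: the paper's entire proof consists of citing Lemma~\ref{lem:etale-s1-prime} together with \cite[Chapter I, Proposition 5.3.11]{FujiwaraKato}, which are precisely the two ingredients you invoke. The only minor imprecision is attributing flatness of $\mf{Y}\to\mf{X}$ to ``the definition of \'etale'' --- in the Fujiwara--Kato framework flatness is a consequence (via the flatness of the reductions modulo $\varpi^n$ and \cite[Chapter I, Proposition 4.8.1]{FujiwaraKato}) rather than part of the definition --- but this does not affect the argument.
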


\subsection{\texorpdfstring{Condition $(R'_1)$}{Condition R'1}}

In our relative situation over $\h_K$ the replacement for the usual condition $(R_1)$ for a formal scheme $\mf{X}$ is a statement about the local rings of $\mf{X}$ at the generic points $\zeta$ of $\mf{X}_k$. Since $\zeta$ is codimension zero in $\mf{X}_k$ it is intuitively of codimension one in $\mf{X}$ and so we would then like to claim that $\h_{\mf{X},\zeta}$ is a DVR. Since $\mf{X}$ is not necessarily Noetherian, we instead require that $\h_{\mf{X},\zeta}$ is a valuation ring.

To this end, for a formal scheme $\mf{X}$ locally of finite type over $\h_K$ let us denote by $I(\mf{X})$ the set of generic points of the scheme $\mf{X}_k$ or, equivalently, the set of generic points of the locally spectral space $|\mf{X}|$.

\begin{definition} \label{def:r1-prime}
    Let $\mf{X}$ be an admissible formal scheme over $\cO_K$. We say that $\mf{X}$ satisfies condition $(R'_1)$ if for every $\zeta$ in $I(\mf{X})$, the local ring $\cO_{\mf{X}, \zeta}$ is a valuation ring.
\end{definition}

Assume that $\mf{X}$ is quasi-compact and set $X=\mf{X}_\eta$. It will be useful in the proof of Lemma \ref{lem:etale-r1-prime} and Proposition \ref{prop:serre-crit} to relate the ring $\h_{\mf{X},\zeta}$ for $\zeta$ in $I(\mf{X})$ to local rings of $\h_X^+$. To this end, following \cite[Chapter II, Definition 11.1.2]{FujiwaraKato} let us denote by $D(\mf{X})$ the set of divisorial points of $X$ which are reisidually finite over $\mf{X}$. By \cite[Chapter II, Proposition 11.1.3]{FujiwaraKato} and \cite[Chapter II, Theorem 11.2.1]{FujiwaraKato} one has that the set $D(\mf{X})$ is a finite set of maximal points, and that the equality
\begin{equation*}
    D(\mf{X})=\bigcup_{\zeta\in I(\mf{X})} \mathrm{sp}_{\mf{X}}^{-1}(\zeta)
\end{equation*}
holds. Moreover, one has by \cite[Chapter II, Theorem 11.2.1]{FujiwaraKato} and \cite[Proposition 6.3/3]{BGR} that 
\begin{equation}\label{eq:spectral-seminorm-formula}
    \|f\|_\mathrm{sup}=\sup_{x\in D(\mf{X})}|f(x)|
\end{equation}
where $\|\cdot\|_\mathrm{sup}$ is the supremum semi-norm as in \cite[\S6.2.1]{BGR}.

Now, for $x$ in $D(\mf{X})$ with specialization $\zeta=\mathrm{sp}_\mf{X}(x)$ one has a natural map of topological rings $\h_{\mf{X},\zeta}\to \h_{X,x}^+$ (this map exists even if $\mf{X}$ is only admissible over $\h_K$). Indeed, since $(X,\h_X^+)$ is isomorphic to the inverse limit $\varprojlim\, (\mf{X}',\h_{\mf{X}'})$ where $\mf{X}'$ travels over the admissible blowups of $\mf{X}$ one sees that there is a canonical identification
\begin{equation}\label{eq:stalk-limit}
    \h_{X,x}^+=\varinjlim \h_{\mf{X}',\op{sp}_{\mf{X}'}(x)}
\end{equation}
(see \cite[Chapter 0, Proposition 4.1.10]{FujiwaraKato}) from where the map $\h_{\mf{X},\zeta}\to\h_{X,x}^+$ is clear. Thus, if $k(x)^+$ denotes the (uncompleted) residue valuation ring, one obtains a map $\h_{X,\zeta}\to k(x)^+$.

\begin{lem} \label{lem:r1'-equivalence}
    Let $\mf{X}$ be an admissible formal scheme over $\h_K$, let $x\in D(\mf{X})$ with image $\zeta\in I(\mf{X})$. Then, the following are equivalent:
    \begin{enumerate}[(a)]
        \item the ring $\h_{\mf{X},\zeta}$ is a valuation ring,
        \item the map $\h_{\mf{X},\zeta}\to k(x)^+$ is an isomorphism.
    \end{enumerate}
    In particular, under these conditions, the ring $\h_{\mf{X},\zeta}$ is a height one valuation ring which is $\varpi$-adically separated and Henselian.
\end{lem}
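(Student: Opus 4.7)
The direction $(b) \Rightarrow (a)$ is immediate: since $x$ is a maximal point, $k(x)^+$ is a rank-one valuation ring, so any ring isomorphic to it is a valuation ring. For $(a) \Rightarrow (b)$, set $V := \h_{\mf{X},\zeta}$ and assume it is a valuation ring. The plan is to use the colimit description \eqref{eq:stalk-limit} to identify $\h_{X,x}^+$ with $V$, and then to match this with $k(x)^+$.

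I would first reduce to $\mf{X} = \Spf(A)$ affine, with $\zeta$ corresponding to an open prime $\tilde{\mf{p}} \subset A$ minimal over $(\varpi)$. The ring $V$ is local with maximal ideal $\sqrt{(\varpi)}$ and residue field $\kappa(\zeta)$. Since $A$ is Noetherian and $\varpi$ is a non-zero-divisor in $A$, Krull's Hauptidealsatz gives $\op{ht}(\tilde{\mf{p}}) = 1$, from which one checks that $V$ has Krull dimension one, hence is a DVR.

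The main step is to show that for every admissible blowup $\mf{X}' \to \mf{X}$ relative to an admissible ideal sheaf $\mc{J}$, the induced map $V \to \h_{\mf{X}', \op{sp}_{\mf{X}'}(x)}$ is an isomorphism. If $\mc{J} \not\subseteq \tilde{\mf{p}}$, then $\mf{X}' \to \mf{X}$ is an isomorphism in a neighborhood of $\zeta$. Otherwise, I would invoke the fact that finitely generated ideals in valuation rings are principal to write $\mc{J} V = (g)$ for some $g \in V$ coming from a generator $g_0 \in \mc{J}(\Spf(A))$. The chart $\Spf(A[\mc{J}/g_0]^\wedge)$ of $\mf{X}'$ then has stalk $V$ above $\zeta$ (as every element of $\mc{J}/g_0$ already lies in $V$). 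Since $x$ specializes to $\zeta$ on $\mf{X}$, the point $\op{sp}_{\mf{X}'}(x)$ is forced to be the unique preimage of $\zeta$ in this chart, yielding the isomorphism.

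Passing to the filtered colimit in \eqref{eq:stalk-limit} gives $\h_{X,x}^+ \cong V$. Since $V$ is a DVR, $V[1/\varpi] = \op{Frac}(V)$ is a field, so $\h_{X,x} = \h_{X,x}^+[1/\varpi] = V[1/\varpi]$ is also a field. Consequently $\mf{m}_{X,x} = 0$, giving $k(x) = \h_{X,x}$ and $k(x)^+ = \h_{X,x}^+ = V$. The ``in particular'' statement (height one, $\varpi$-adic separation, Henselianness) then follows immediately from the DVR structure of $V$ together with the fact that $\varpi$ is topologically nilpotent. The main obstacle is the blowup-invariance step, where one must carefully verify that $\op{sp}_{\mf{X}'}(x)$ indeed corresponds to the distinguished preimage of $\zeta$ in the chosen chart of the blowup.
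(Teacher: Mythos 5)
Your direction $(b)\Rightarrow(a)$ and the first half of $(a)\Rightarrow(b)$ are essentially the paper's argument: the blowup-invariance of $\h_{\mf{X},\zeta}$ follows because the finitely generated ideal $\mc{J}_\zeta$ is invertible in the valuation ring $\h_{\mf{X},\zeta}$, hence $\mc{J}$ is invertible on a neighborhood of $\zeta$ and the blowup is an isomorphism there; passing to the colimit \eqref{eq:stalk-limit} then identifies $\h_{\mf{X},\zeta}$ with $\h_{X,x}^+$. The problem is the step where you conclude that $V=\h_{\mf{X},\zeta}$ is a DVR. You invoke Noetherianness of $A$ and Krull's Hauptidealsatz, but an admissible $\h_K$-algebra is Noetherian only when $\h_K$ is, i.e.\ when $K$ is discretely valued. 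The lemma is stated (and needed) for an arbitrary rank-one non-archimedean field, where $\h_K$ and hence $A$ are typically non-Noetherian; this is exactly why the conclusion reads ``height one valuation ring'' rather than ``DVR,'' and why the appendix warns that the proofs are considerably easier in the discretely valued case. This step is load-bearing: you use the rank-one property of $V$ to deduce that $V[1/\varpi]$ is a field, hence $\mf{m}_x\cap\h_{X,x}^+=0$ and $k(x)^+=\h_{X,x}^+$. Without an a priori bound on the rank of $V$, a valuation ring with $\mf{m}_V=\sqrt{(\varpi)}$ could still have nonzero primes not containing $\varpi$, and then $V[1/\varpi]$ would not be a field, so the identification $k(x)^+=\h_{X,x}^+$ does not follow.

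What is actually needed is injectivity of $\h_{X,x}^+\to k(x)^+$, i.e.\ that $f\in\h_{\mf{X},\zeta}$ with $|f(x)|=0$ must vanish. The paper proves this directly, without knowing the rank of $V$ in advance: it passes to a formal model (as in \cite[Chapter II, Proposition 11.2.8]{FujiwaraKato}) for which $\op{sp}^{-1}(\zeta)=\{x\}$, so that by the supremum-seminorm formula \eqref{eq:spectral-seminorm-formula} the condition $|f(x)|=0$ forces $\|f\|_{\mathrm{sup}}=0$, hence $f$ nilpotent, hence $f=0$ in the domain $\h_{\mf{X},\zeta}$. Only afterwards is the rank-one (and Henselian, $\varpi$-adically separated) property deduced, as a consequence of the isomorphism with $k(x)^+$ and the fact that $x$ is a maximal point. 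Your argument as written is valid only under the additional hypothesis that $K$ is discretely valued; to cover the general case you must either supply this seminorm argument or find another route to the vanishing of $\mf{m}_x\cap\h_{X,x}^+$.
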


\begin{proof} 
Clearly (b) implies (a). Moreover, the last statement follows then immediately from the fact that $x$ is a maximal point and \cite[Chapter II, Corollary 3.2.8]{FujiwaraKato}. To see that (a) implies (b), let us begin by observing that for any any admissible blowup $\mf{X}'$, the map $\h_{\mf{X},\zeta}\to \h_{\mf{X}',\zeta'}$ is an isomorphism where $\zeta'=\op{sp}_{\mf{X}'}(x)$. Indeed, suppose that $\mf{X}'\to\mf{X}$ is the blowup at an admissible ideal sheaf $\mc{J}$. Since $\mc{J}$ is finitely generated we know that $\mc{J}_\zeta\subseteq \h_{\mf{X},\zeta}$ is a finitely generated ideal which, since $\h_{\mf{X},\zeta}$ is valuation ring, must be invertible. We may then find an open $\mf{U}$ of $\mf{X}$ containing $\zeta$ such that $\mc{J}|_{\mf{U}}$ is invertible. But, $\mf{X}'_{\mf{U}}\to\mf{U}$ coincides with the blowup of $\mf{U}$ along $\mc{J}|_\mf{U}$ (see \cite[Chapter II, Proposition 1.1.8]{FujiwaraKato}) but since $\mc{J}|_\mf{U}$ is invertible this implies that $\mf{X}'_\mf{U}\to\mf{U}$ is an isomorphism and so evidently $\h_{\mf{X},\zeta}\to\h_{\mf{X}',\zeta'}$ is an isomorphism as desired.

Combining this with \eqref{eq:stalk-limit} we deduce that the map $\h_{\mf{X},\zeta}\to\h_{X,x}^+$ is an isomorphism. To show then that the map $\h_{\mf{X},\zeta}\to k(x)^+$ is an isomorphism note that for any admissible blowup $\mf{X}'$ of $\mf{X}$ the map $\h_{\mf{X},\zeta}\to\h_{\mf{X}',\zeta'}$ is an isomorphism and commutes with the natural maps to $k(x)^+$. Thus, we may replace $\h_{\mf{X},\zeta}$ with any $\h_{\mf{X}',\zeta'}$. But, we in particular may take $\mf{X}'$ as in \cite[Chapter II, Proposition 11.2.8]{FujiwaraKato}. Since, the kernel of $\h_{\mf{X},\zeta}\simeq \h_{X,x}^+\to k(x)^+$ is precisely those $f$ with $|f(x)|=0$ we will be done if we can show that for $f$ in $\h_{\mf{X},\zeta}$ one has that $|f(x)|=0$ implies that $f=0$. But, we may as well assume that $f$ is in $\h(\mf{U}_\eta)$ for some affine irreducible neighborhood $\mf{U}$ of $\zeta$. But, we then see that by our choice of model $D(\mf{U})=\op{sp}_\mf{X}^{-1}(\zeta)=\{x\}$. Thus, by \eqref{eq:spectral-seminorm-formula} we deduce that $\|f\|_{\mathrm{sup}}=0$ (where this supremum is taken relative to $\mf{U}_\eta$) and thus by \cite[Proposition 3.1/10]{BoschLectures} we deduce that $f$ is a nilpotent element, and since $\h_{\mf{X},\zeta}$ is a domain, has trivial image in $\h_{\mf{X},\zeta}$.
\end{proof}

We now wish to establish that the $(R_1')$ condition ascends through \'etale maps of formal schemes. To do this, we first need the following lemma.

\begin{lem} \label{lem:finite-etale-maps}
    Let $f\colon \Spf(B)\to \Spf(A)$ be a finite \'etale morphism of admissible formal schemes over $\h_K$. Then, the map $A\to B$ is finite \'etale.
\end{lem}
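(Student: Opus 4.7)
The plan is to reduce the statement to the classical equivalence between finite \'etale algebras over a henselian pair and over its quotient.

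First, since $f\colon \Spf(B)\to\Spf(A)$ is a finite morphism of adic formal schemes, $B$ is a finitely generated $A$-module. Setting $\bar A = A/\varpi A$ and $\bar B = B/\varpi B$, Proposition~\ref{topological invariance formal schemes} tells us that $\bar f\colon \Spec(\bar B)\to\Spec(\bar A)$ is finite \'etale as a morphism of schemes.

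Next, I would invoke the fact that $(A, (\varpi))$ is a henselian pair, which follows from the $\varpi$-adic completeness of $A$ (see \stacks{0ALJ}). Applying the standard equivalence \stacks{09ZL} between finite \'etale $A$-algebras and finite \'etale $\bar A$-algebras, we obtain a finite \'etale $A$-algebra $B'$ (in the algebraic sense) together with an isomorphism $\bar B \simeq B'/\varpi B'$ of $\bar A$-algebras. Since $B'$ is a finite module over the $\varpi$-adically complete ring $A$, it is itself $\varpi$-adically complete, so the induced morphism $\Spf(B')\to\Spf(A)$ is both finite and \'etale in the sense of formal schemes: finiteness is immediate, adicness is clear from $\varpi B'$ being an ideal of definition, and algebraic \'etaleness implies formal \'etaleness along arbitrary nilpotent thickenings.

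With $\Spf(B)$ and $\Spf(B')$ now both realized as finite \'etale formal $\Spf(A)$-schemes whose reductions modulo $\varpi$ are identified, one further application of Proposition~\ref{topological invariance formal schemes} (restricted to $\FEt$, which is a full subcategory of $\Et$) lifts the chosen isomorphism $\bar B\simeq B'/\varpi B'$ to an isomorphism $\Spf(B)\simeq \Spf(B')$ over $\Spf(A)$. Taking global sections yields an isomorphism $B\simeq B'$ of $A$-algebras, so $A\to B$ is finite \'etale algebraically. The main subtlety I anticipate is verifying that algebraic finite \'etaleness of $A\to B'$ yields a finite \'etale morphism of formal schemes $\Spf(B')\to\Spf(A)$ in the sense of Fujiwara--Kato, but this is essentially formal: the morphism is adic and topologically of finite presentation (as $B'$ is a finite $A$-module with the $\varpi$-adic topology), and formal \'etaleness follows from the algebraic version by lifting along thickenings of the form $R\to R/J$ with $J$ nilpotent.
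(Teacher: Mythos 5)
Your argument is correct, but it follows a genuinely different route from the paper's. The paper works directly with the ring map: it shows $A\to B$ is weakly \'etale by checking flatness of $A\to B$ and of $B\to B\otimes_A B$ modulo $\varpi^m$ for every $m$ (using \cite[Chapter I, Proposition 4.8.1]{FujiwaraKato}, the point being that $B\otimes_A B$ is already complete since it is finite over $B$), observes that $B$ is finitely presented over $A$ via \cite[Theorem 7.2/4]{BoschLectures}, and concludes because a weakly \'etale, finitely presented ring map is \'etale \cite[Theorem 1.3]{BhattScholze}. You instead lift $\bar B$ across the henselian pair $(A,(\varpi))$ to an algebraic finite \'etale $A$-algebra $B'$ and then identify $\Spf(B')$ with $\Spf(B)$ by full faithfulness of the reduction functor on \'etale formal schemes (Proposition~\ref{topological invariance formal schemes}, applied with the ideal of definition $(\varpi)$ rather than $\mf{m}_K$). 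Both approaches work; yours trades the weakly \'etale machinery for the classical lifting theorem for henselian pairs \stacks{09ZL}, and the final identification can be read as saying that $\Spf(B)$ and $\Spf(B')$ are two \'etale deformations of $\Spec(\bar B)$ along the same thickening, whose uniqueness is exactly the content of Proposition~\ref{topological invariance formal schemes}. Two minor points of hygiene: the fact that $\Spec(\bar B)\to \Spec(\bar A)$ is finite \'etale is really a matter of the Fujiwara--Kato definitions of finite and \'etale for adic morphisms (with \cite[Chapter I, Proposition 4.2.3]{FujiwaraKato} for finiteness) rather than a consequence of Proposition~\ref{topological invariance formal schemes}; and the $\varpi$-adic completeness of $B'$ is cleanest to see by noting that a finite \'etale algebra is finite projective, hence a direct summand of a finite free module over the complete ring $A$.
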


\begin{proof} 
We claim that the ring map $A\to B$ is weakly \'etale (see \cite[Definition 1.2]{BhattScholze}). Indeed, we must show that $A\to B$ is flat, and that $B\to B\otimes_A B$ are flat. But, note that since $A\to B$ is finite, $B\otimes_A B$ is finite over $B$, and so complete. From this and \cite[Chapter I, Proposition 4.8.1]{FujiwaraKato} we see that it suffices to check that $A/\varpi^m A\to B/\varpi^m B$ is weakly etale for all $m\geqslant 0$, but this is clear since this map is \'etale. Note though that since $A\to B$ is finite, $B$ is actually finitely presented as an $A$-module (see \cite[Theorem 7.2/4]{BoschLectures}) and so $A\to B$ is finitely presented (see \stacks{0D46}). Since $A\to B$ is both finitely presented and weakly \'etale, it is \'etale (see \cite[Theorem 1.3]{BhattScholze}).
\end{proof}

\begin{lem}\label{lem:etale-r1-prime}
    Let $f\colon \mf{Y}\to \mf{X}$ be an \'etale morphism of admissible formal schemes over $\cO_K$. If $\mf{X}$ satisfies $(R'_1)$, then so does $\mf{Y}$. 
\end{lem}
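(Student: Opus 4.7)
Since condition $(R'_1)$ can be checked Zariski-locally on $\mathfrak{Y}$, I may assume both $\mathfrak{Y}=\Spf(B)$ and $\mathfrak{X}=\Spf(A)$ are affine. The plan is to first reduce to the case where $\mathfrak{Y}\to\mathfrak{X}$ is finite \'etale. By the topological invariance of the \'etale site (Proposition~\ref{topological invariance formal schemes}), the morphism $\mathfrak{Y}\to\mathfrak{X}$ is determined by the \'etale morphism of Noetherian schemes $\mathfrak{Y}_k\to \mathfrak{X}_k$. Applying the standard local structure theorem for \'etale morphisms of schemes to $\mathfrak{Y}_k\to\mathfrak{X}_k$ and pulling back along topological invariance, I obtain Zariski-locally on $\mathfrak{Y}$ a factorization $\mathfrak{Y}\hookrightarrow \mathfrak{Y}'\to \mathfrak{X}$ with the first map an open immersion and the second finite \'etale. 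Since $(R'_1)$ is unchanged by passing to an open subscheme (local rings are preserved), I may replace $\mathfrak{Y}$ by $\mathfrak{Y}'$ and assume $\mathfrak{Y}\to\mathfrak{X}$ is finite \'etale; by Lemma~\ref{lem:finite-etale-maps}, this means $A\to B$ is a finite \'etale ring map.

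Now fix $\zeta'\in I(\mathfrak{Y})$. Its image $\zeta=f(\zeta')$ is a generic point of $\mathfrak{X}_k$, since the \'etale map $f_k$ sends generic points to generic points. Set $V=A_\zeta$ and $W=B_{\zeta'}$. By hypothesis together with Lemma~\ref{lem:r1'-equivalence}, $V$ is a height-one valuation ring which is $\varpi$-adically Henselian. The $V$-algebra $B\otimes_A V$ is finite \'etale over $V$, and the decomposition of its closed fibre corresponding to the finitely many primes of $B$ lying over $\zeta$ lifts by $\varpi$-adic Henselianity of $V$, yielding $B\otimes_A V=\prod_{\zeta''\mid \zeta}B_{\zeta''}$. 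In particular $W$ is itself a local finite \'etale $V$-algebra.

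To finish, I invoke the structure theory of finite \'etale extensions of Henselian local rings applied to $V$: the residue field extension $\kappa_W/\kappa_V$ is finite separable, so $W\cong V[t]/(\tilde f(t))$ for a monic lift $\tilde f$ of an irreducible separable polynomial in $\kappa_V[t]$. Hence $W$ is a domain, finite free over $V$, and being \'etale over the normal ring $V$, normal. Therefore $W$ coincides with the integral closure of $V$ in the finite separable extension $L:=\op{Frac}(W)$ of $K:=\op{Frac}(V)$. Since $V$ is Henselian, the valuation on $K$ admits a unique extension to $L$, whose valuation ring is precisely this integral closure; hence $W$ is a valuation ring, as required.

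The main obstacle is the reduction to the finite \'etale case: making the local factorization through a finite \'etale cover rigorous in the formal-scheme setting (which requires combining the standard \'etale structure theorem on the special fibre with the equivalence $\Et_{\mathfrak{X}}\simeq \Et_{\mathfrak{X}_k}$). Once this reduction is in place, the argument is a direct consequence of the well-known structure of finite \'etale extensions of Henselian valuation rings.
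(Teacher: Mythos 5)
Your overall architecture matches the paper's: pass to the stalks, show $\h_{\mf{X},\zeta}\to\h_{\mf{Y},\xi}$ is finite \'etale, and then invoke the theory of finite \'etale extensions of Henselian valuation rings. The last step is fine (the paper argues via norms and Hensel's lemma rather than citing uniqueness of the extension of the valuation, but the content is the same). The problem is the step you yourself flag as the main obstacle: your justification of the reduction to the finite \'etale case does not work as stated. The standard local structure theorem for \'etale morphisms produces, Zariski-locally, a factorization $\Spec\bigl((A[T]/(g))_h\bigr)\hookrightarrow\Spec(A[T]/(g))\to\Spec(A)$ in which the second map is finite \emph{locally free} but \'etale only on the open locus where $g'$ is invertible; it is not finite \'etale. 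Indeed, an \'etale morphism does \emph{not} in general factor Zariski-locally on source and target as an open immersion followed by a finite \'etale morphism --- the correct general statement (\'etale localization of quasi-finite morphisms) requires an \'etale base change on $\mf{X}$, which you cannot afford here since you need to compare the actual Zariski stalks $\h_{\mf{X},\zeta}$ and $\h_{\mf{Y},\xi}$.

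The fix, which is exactly what the paper does, is to exploit that you only need this factorization in a neighborhood of a point $\xi\in I(\mf{Y})$ lying over a \emph{generic} point $\zeta\in I(\mf{X})$: the local ring of $\mf{X}\otimes\h_K/\varpi$ at $\zeta$ has a field as its reduction, so by topological invariance every \'etale algebra over it is a finite product of finite \'etale algebras; spreading out (and using \cite[Chapter I, Proposition 4.2.3]{FujiwaraKato} to lift to the formal schemes) yields affine neighborhoods $\Spf(B)\to\Spf(A)$ of $\xi$ and $\zeta$ which are finite \'etale with $f^{-1}(\zeta)\cap\Spf(B)=\{\xi\}$, after which Lemma~\ref{lem:finite-etale-maps} applies. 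A related loose end in your write-up: your identification $B\otimes_A\h_{\mf{X},\zeta}=\prod_{\zeta''\mid\zeta}\h_{\mf{Y},\zeta''}$ needs the observation that, for the clopen piece containing a given $\zeta''$, preimages of opens around $\zeta$ are cofinal among opens around $\zeta''$ (this uses finiteness of the map and that the piece meets $f^{-1}(\zeta)$ only in $\zeta''$); the paper avoids this by shrinking so that $f^{-1}(\zeta)=\{\xi\}$ before taking stalks. Both points are repairable, but as written the reduction to the finite \'etale case is a genuine gap rather than a routine appeal to the structure theorem.
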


\begin{proof}
Let $\xi$ be a generic point of $\mf{Y}$ and let $\zeta=f(\xi)$. We first show that the map $\h_{\mf{X},\zeta}\to\h_{\mf{Y},\xi}$ is finite \'etale. Note that $\zeta$ is a generic point of $\mf{X}$. Indeed, it suffices to show that $f_1(\xi)$ is a generic point where $f_1$ is the reduction of $f$ modulo $(\varpi)$, but this is clear, as $f_1$ is an open morphism of topologically Noetherian schemes.  Note then, that since $f_1$ is an \'etale morphism of schemes, there exists open neighborhoods $U$ (resp.\@ $V$) of $\zeta$ (resp.\@ $\xi$) such that $f(V)\subseteq U$, $f_1|_{V}\colon V\to U$ is finite \'etale, and $f_1^{-1}(\zeta)\cap V=\{\xi\}$. Indeed, this follows from the fact that the local ring at $\zeta$ has reduced subscheme which is a field, and so (by \stacks{04DZ}) all \'etale algebras are finite product of finite \'etale algebras. From this and \cite[Chapter I, Proposition 4.2.3]{FujiwaraKato} we see that we may find affine neighborhoods $\Spf(A)$ (resp.\@ $\Spf(B)$) of $\zeta$ (resp.\@ $\xi$) such that $f(\Spf(B))\subseteq \Spf(A)$ and $f|_{\Spf(B)}\colon \Spf(B)\to \Spf(A)$ is finite \'etale and $f^{-1}(\zeta)\cap \Spf(B)=\{\xi\}$. Replace $f\colon \mf{Y}\to\mf{X}$ by $f|_{\Spf(B)}$. By Lemma \ref{lem:finite-etale-maps} we then know that $A\to B$ is finite \'etale. Now, since $f$ is finite and $f^{-1}(\zeta)=\{\xi\}$ the neighborhoods of the form $f^{-1}(D(f))$ for $D(f)$ a neighborhood of $\zeta$ are cofinal in the neighborhoods of $\xi$. Thus, $\h_{\mf{Y},\xi}=\h_{\mf{X},\zeta}\otimes_A B$, and since $A\to B$ is finite \'etale we deduce that $\h_{\mf{X},\zeta}\to\h_{\mf{Y},\xi}$ is finite \'etale as desired. In particular, since $\h_{\mf{X},\zeta}$ is a normal domain, so is $\h_{\mf{Y},\xi}$ (see \stacks{025P}).

It remains to show that if $V\to V'$ is a finite \'etale local homomorphism between local normal domains and $V$ is a Henselian valuation ring then $V'$ is valuation ring. Set $K$ (resp.\@ $K'$) to be the fraction field of $V$ (resp.\@ $V'$). Take a nonzero $f\in K'$, and suppose that ${\rm Nm}_{K'/K}(f)$ belongs to $V$. By Hensel's lemma (see e.g.\ \cite[Corollary~6.5]{Neukirch}), the minimal polynomial of $f$ over $K$ has coefficients in $V$, and hence $f$ is integral over $V$ and hence belongs to $V'$. If ${\rm Nm}_{K'/K'}(f)$ does not belong to $V$, then ${\rm Nm}_{K'/K}(f^{-1})$ does, and so either $f$ or $f^{-1}$ belongs to $V'$.
\end{proof}

\subsection{\texorpdfstring{Serre's criterion for $\eta$-normality}{Serre's criterion for eta-normality}}  

Serre's criterion states that a locally Noetherian scheme is normal if and only if it is both $(S_2)$ and $(R_1)$. In our situation of admissible formal schemes over $\cO_K$, we have the following analog (compare with condition $(N)$ in \cite[\S 6.1]{RaynaudPicard}). 

\begin{prop}[Serre's criterion for $\eta$-normality] \label{prop:serre-crit}
    Let $\mf{X}$ be an admissible formal scheme over $\cO_K$. The following are equivalent:
    \begin{enumerate}[(a)]
        \item $\mf{X}$ is $\eta$-normal,
        \item $\mf{X}$ satisfies $(S'_2)$ and $(R'_1)$.
    \end{enumerate}
\end{prop}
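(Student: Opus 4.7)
The plan is to prove the two implications separately, both relying on the interaction between $A$, $A_K^\circ$, and the topology of divisorial points $D(\mf{X})$.

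For $(b) \Rightarrow (a)$, I work affine-locally on $\mf{X}=\Spf(A)$. Given $f \in A_K^\circ$, write $f = g/\varpi^N$ with $g \in A$ and $N \geq 0$ chosen minimal, and assume toward a contradiction that $N \geq 1$, so $g \notin \varpi A$. For each $\zeta \in I(\mf{X})$, pick $x \in D(\mf{X})$ specializing to $\zeta$. Since $|\varpi(x)| < 1$, the estimate $|g(x)| = |\varpi(x)|^N \cdot |f(x)| \leq |\varpi(x)|$ yields $g/\varpi \in k(x)^+$, and $(R_1')$ together with Lemma~\ref{lem:r1'-equivalence} identifies $k(x)^+$ with $\h_{\mf{X},\zeta}$, so $g \in \varpi \h_{\mf{X},\zeta}$. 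Letting $\mf{p}$ be the height-one prime of $A$ corresponding to $\zeta$, the faithfully flat map $A_\mf{p} \to \h_{\mf{X},\zeta}$ (from Gabber's theorem, as used in the proof of Lemma~\ref{lem:eta-normal-basic-criteria}) gives $\varpi \h_{\mf{X},\zeta} \cap A_\mf{p} = \varpi A_\mf{p}$, so $g \in \varpi A_\mf{p}$. Thus $g$ vanishes in $(A/\varpi A)_\mf{p}$ for every minimal prime $\mf{p}$ of $A/\varpi A$, and $(S_2')$ --- which ensures the Noetherian ring $A/\varpi A$ injects into the product of its localizations at minimal primes --- implies $g \in \varpi A$, contradicting the minimality of $N$.

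For $(a) \Rightarrow (R_1')$, fix a height-one prime $\mf{p}$ of $A$ containing $\varpi$. Let $\widetilde{A}$ denote the normalization of $A$ in its total ring of fractions $\op{Tot}(A)$, which is finite over $A$ by Nagata/excellence. The assumption $A = A_K^\circ$ amounts to $A = \widetilde{A} \cap A_K$ inside $\op{Tot}(A)$, and since the submodules are finitely generated, localization commutes with intersection to give $A_\mf{p} = \widetilde{A}_\mf{p} \cap (A_K)_\mf{p}$. The key observation is that $(A_K)_\mf{p} = A_\mf{p}[1/\varpi] = \op{Tot}(A_\mf{p})$: because $\mf{p}$ has height one and contains $\varpi$, inverting $\varpi$ kills the unique height-one prime of the one-dimensional reduced ring $A_\mf{p}$, leaving a zero-dimensional reduced semilocal ring that must coincide with the total ring of fractions. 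Consequently $A_\mf{p} = \widetilde{A}_\mf{p}$, so $A_\mf{p}$ is normal; being local, Noetherian and reduced, it must be a domain (any nontrivial idempotent of $\op{Tot}(A_\mf{p})$ would be integral over $A_\mf{p}$, but a local ring has no nontrivial idempotents). A one-dimensional Noetherian normal local domain is a DVR, and using excellence to pass to a formally normal Zariski neighborhood of $\zeta$ one deduces that $\h_{\mf{X},\zeta}$ is again a valuation ring.

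For $(a) \Rightarrow (S_2')$, flatness is part of admissibility, so the content is $(S_1')$ for $A/\varpi A$. Suppose toward a contradiction that there is an embedded associated prime $\mf{q}$, i.e., a prime of $A$ of height $\geq 2$ containing $\varpi$ with $\mf{q} = \op{ann}_A(f \bmod \varpi)$ for some $f \in A \setminus \varpi A$. Then $sf/\varpi \in A$ for every $s \in \mf{q}$, and evaluating at any $x \in X$ gives $|s(x)| \cdot |f/\varpi(x)| \leq 1$. If $|f/\varpi(x)| > 1$ for some $x \in D(\mf{X})$, this would force $|s(x)| < 1$ for all $s \in \mf{q}$, i.e., $\mf{q} \subseteq \op{sp}_\mf{X}(x)$; but $\op{sp}_\mf{X}(x) \in I(\mf{X})$ corresponds to a height-one prime of $A$, contradicting the height of $\mf{q}$ being $\geq 2$. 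Hence $|f/\varpi(x)| \leq 1$ on all of $D(\mf{X})$, and the supremum-norm formula~\eqref{eq:spectral-seminorm-formula} gives $f/\varpi \in A_K^\circ = A$, contradicting $f \notin \varpi A$.

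The main obstacle is the implication $(a) \Rightarrow (R_1')$: recognizing the identification $(A_K)_\mf{p} = \op{Tot}(A_\mf{p})$ is not immediate, and the argument hinges on $\mf{p}$ containing $\varpi$ together with the finiteness of the normalization (requiring excellence). The remaining implications follow more mechanically once one exploits the interplay between $A_K^\circ$, the valuation rings $k(x)^+$ at divisorial points, and the height-one constraint imposed by $\op{sp}_\mf{X}(D(\mf{X})) \subseteq I(\mf{X})$.
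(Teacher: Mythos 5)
There is a genuine gap, concentrated in your implication $(a)\Rightarrow(R_1')$. The ring $A$ is topologically of finite type over $\cO_K$, where $\cO_K$ is an arbitrary rank-one valuation ring; $A$ is Noetherian (let alone excellent) only when $K$ is discretely valued, and the appendix warns explicitly that the arguments are much easier in that case. Consequently ``the normalization is finite over $A$ by Nagata/excellence,'' ``localization commutes with intersection since the submodules are finitely generated,'' ``the one-dimensional reduced ring $A_{\mf{p}}$,'' and above all ``a one-dimensional Noetherian normal local domain is a DVR'' are all unavailable: when the value group of $K$ is dense, $\h_{\mf{X},\zeta}$ is a non-discrete, non-Noetherian valuation ring, so no argument whose conclusion is ``DVR'' can be correct. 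Even granting discreteness, the closing step --- ``using excellence to pass to a formally normal Zariski neighborhood of $\zeta$ one deduces that $\h_{\mf{X},\zeta}$ is again a valuation ring'' --- is not an argument: you must still relate the algebraic localization $A_{\mf{p}}$ to the topological stalk $\h_{\mf{X},\zeta}=\varinjlim A\langle f^{-1}\rangle$ and then to $k(x)^+$, which is exactly what Lemma~\ref{lem:r1'-equivalence} does \emph{assuming} $\h_{\mf{X},\zeta}$ is already a valuation ring. The paper's route is entirely different: from $A=A_K^\circ$ it identifies $A_K^\circ/A_K^{\cc}$ with the reduced special fiber (note this is what the paper calls $\widetilde{A}$, clashing with your notation for the normalization), deduces that the supremum seminorm is multiplicative on an irreducible affine neighborhood of $\zeta$ via \cite[Proposition~6.2.3/5]{BGR}, and then, for a fraction $f/g$, compares $\|f\|_{\mathrm{sup}}$ with $\|g\|_{\mathrm{sup}}$ (after extracting constants using \cite[Proposition~6.2.1/4]{BGR}) to decide directly whether $f/g$ or $g/f$ lies in $A_K^\circ=A\subseteq\h_{\mf{X},\zeta}$. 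You need an argument of this kind; the one you give does not close.

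The other two implications are closer to the paper but also lean on Noetherian crutches that must be removed. In $(a)\Rightarrow(S_2')$ you encode the failure of $(S_1')$ as an embedded associated prime of height $\geqslant 2$; the whole point of Definition~\ref{def:s1-prime} is that associated and embedded primes are not available for the non-Noetherian ring $A/\varpi A$. Your computation survives if rephrased via Lemma~\ref{lem:s1-prime-conditions}(a): take a single $s$ with $sf\in\varpi A$, $f\notin\varpi A$, and $s$ in no minimal prime of $A/\varpi A$; then $|s(x)|=1$ for every $x\in D(\mf{X})$ because $\op{sp}_{\mf{X}}(x)$ runs exactly over those minimal primes, so $|f(x)/\varpi(x)|\leqslant 1$ on $D(\mf{X})$ and \eqref{eq:spectral-seminorm-formula} gives $f/\varpi\in A_K^\circ=A$, a contradiction --- essentially the paper's argument. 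Likewise, in $(b)\Rightarrow(a)$ the claim that ``the Noetherian ring $A/\varpi A$ injects into the product of its localizations at minimal primes'' should instead be derived from $(S_1')$ itself together with prime avoidance over the finitely many minimal primes (finitely many because $|\Spec(A/\varpi A)|=|\Spec(A_k)_{\rm red}|$ is a finite-type $k$-scheme); with that repair, and with the observation that $A_{\mf{p}}\to\h_{\mf{X},\zeta}$ is a local flat, hence faithfully flat, map, this direction agrees in substance with the paper's.
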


\begin{proof}
Since being $\eta$-normal and the conditions in (b) are Zariski local on $\mf{X}$, we may assume that $\mf{X}=\Spf(A)$ where $A$ is an admissible $\h_K$-algebra. Set $X$ to be $\mf{X}_\eta=\Spa(A_K)$. 
    
Suppose first that condition (a) holds. To show condition $(S'_2)$ it suffices to show that if $a$ is an element of $A$ such $a\in \varpi\h_{\mf{X},\zeta}$ for all $\zeta$ in $I(\mf{X})$ then $a\in \varpi A$. Consider $\varpi^{-1}a$ as an element of $A_K$. Note then that by assumption we have that $\varpi^{-1}a\in \h_{\mf{X},\zeta}\subseteq k(x)^+$ for all $x$ in $D(\mf{X})$. Thus, we see from \eqref{eq:spectral-seminorm-formula} and \cite[Proposition 6.2.3/1]{BGR} that $\varpi^{-1}a$ is in $A_K^\circ =A$ as desired. 

To see that $(R'_1)$ holds, passing to an affine open neighborhood of a given $\zeta\in I(\mf{X})$ we may assume that $\mf{X}=\Spf(A)$ and $|\mf{X}|$ is irreducible. Moreover, since the equality $A_K^\circ=A$ shows that $\Spec(\widetilde{A}_K)=\Spec(A_k)_\mathrm{red}$ (where $\widetilde{A}=A_K^\circ/A_K^\cc$ as in  \cite[\S1.2.5]{BGR}) we deduce that $\widetilde{A}$ is a domain, and since $A$ is moreover reduced, the supremum semi-norm $\|\cdot\|_\mathrm{sup}$ is a multiplicative norm by \cite[Proposition~6.2.3/5]{BGR}. This implies that $A$ is a domain; by the same argument, for every $f\in A$ with $f(\zeta)\neq 0$, the complete localization $A\langle f^{-1}\rangle$ is a domain, and passing to the limit also $\cO_{\mf{X}, \zeta}$ is a domain. To check that $\cO_{\mf{X}, \zeta}$ is a valuation ring, take a nonzero fraction $f/g \in \op{Frac}(\cO_{\mf{X}, \zeta})$; shrinking $\mf{X}$ further, we may assume $f,g\in A$ and $f^{-1}, g^{-1}\in A_K$. For the latter claim, since $A=A_K^\circ$, we can by \cite[Proposition 6.2.1/4 (ii)]{BGR} write $f^n = \alpha u$, $g^m=\beta v$ where $\alpha,\beta\in \cO_K$ and $u, v$ have supremum norm one and hence their images in $\widetilde{A}$ are nonzero; passing to a smaller affine neighborhood of $\zeta$, we may assume that $u$ and $v$ are invertible in $\widetilde{A}$ and hence in $A$ (because $A$ and $\widetilde{A}$ have the same maximal ideals), so that $f$ and $g$ become invertible in $A_K$. Then if say $\|f\|_{\rm sup}\leqslant \|g\|_{\rm sup}$, then $f/g\in A_K$ and $\|f/g\|_{\rm sup}\leqslant 1$, so $f/g \in A_K^\circ = A$ and so $f/g\in\h_{\mf{X},\zeta}$. Otherwise, $g/f\in \h_{\mf{X},\zeta}$ by the same argument.
    
Conversely, suppose that the conditions in (b) hold.  On the one hand, again by \eqref{eq:spectral-seminorm-formula} we have
\begin{equation*} 
    A_K^\circ=\left\{a\in A_K \,:\, a\in k(x)^+\text{ for each }x\in D(\mf{X})\right\}
\end{equation*}
where we are implicitly identifying $a\in A_K$ with its image in $k(x)$. On the other hand, we claim that 
\begin{equation*} 
    A=\left\{a\in A_K \,:\, a \in \h_{\mf{X},\zeta}\text{ for all }\zeta\in I(\mf{X})\right\}
\end{equation*}
where we identify $a\in A_K$ with its image in $\h_{\mf{X},\zeta}[\frac{1}{\varpi}]$. This would imply the required assertion by Lemma \ref{lem:r1'-equivalence} and the $(R'_1)$ property. 

To show the claim, we note that evidently $A$ maps into $\h_{\mf{X},\zeta}$. Conversely, let $a\in A_K$ be an element of the right hand side, and write $a=\varpi^{-n}a'$ with $a'\in A$ and $n\geqslant 0$. By the $(S'_2)$ property of $\mf{X}$, or equivalently the $(S'_1)$ property of $\Spec(A/\varpi^n)$, our assumption that $a'$ lies in $\varpi^n\cO_{\mf{X},\zeta}$ for all $\zeta$ implies that $a'\in \varpi^n A$, and hence $a\in A$ as desired. 
\end{proof}

Using the criterion of Proposition~\ref{prop:serre-crit}, Lemma~\ref{prop:etale-s2-prime} and Lemma~ \ref{lem:etale-r1-prime} imply the following result.

\begin{cor}\label{prop:etale-eta-normal}
    Let $\mf{Y}\to \mf{X}$ be an \'etale morphism of admissible formal schemes over $\cO_K$. If $\mf{X}$ is $\eta$-normal, then so is $\mf{Y}$.
\end{cor}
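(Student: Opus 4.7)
The plan is to reduce the statement to the two propagation results already established in the previous subsections by applying Serre's criterion for $\eta$-normality. Since the corollary is explicitly advertised in the text as an immediate consequence of Proposition~\ref{prop:serre-crit}, Lemma~\ref{prop:etale-s2-prime}, and Lemma~\ref{lem:etale-r1-prime}, there is essentially no new content to produce: the main work has been done in establishing that $(S'_2)$ and $(R'_1)$ are each stable under étale maps.

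Concretely, I would proceed as follows. First, note that since $\mf{Y}\to\mf{X}$ is étale (hence adic) and $\mf{X}$ is admissible over $\h_K$, the formal scheme $\mf{Y}$ is also admissible; flatness is preserved by étale morphisms and the fact that $\mf{Y}$ is locally topologically of finite type over $\h_K$ follows because étale morphisms are in particular adically locally of finite presentation. Next, by Proposition~\ref{prop:serre-crit} applied to $\mf{X}$, the assumption that $\mf{X}$ is $\eta$-normal is equivalent to the statement that $\mf{X}$ satisfies both $(S'_2)$ and $(R'_1)$. Then, by Lemma~\ref{prop:etale-s2-prime} the property $(S'_2)$ transfers from $\mf{X}$ to $\mf{Y}$, and by Lemma~\ref{lem:etale-r1-prime} the property $(R'_1)$ transfers from $\mf{X}$ to $\mf{Y}$. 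Finally, applying Proposition~\ref{prop:serre-crit} in the reverse direction to $\mf{Y}$, we conclude that $\mf{Y}$ is $\eta$-normal.

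There is no genuine obstacle here; the difficulty has been packaged into the earlier lemmas. The substantive ingredient behind Lemma~\ref{prop:etale-s2-prime} is Lemma~\ref{lem:etale-s1-prime} (condition $(S'_1)$ is preserved by étale maps of locally topologically Noetherian schemes, proved by reducing to standard étale morphisms and exploiting that the map is finite locally free and open), together with the independence of the choice of pseudouniformizer from Proposition~\ref{prop:s1'-diff-pu}. The substantive ingredient behind Lemma~\ref{lem:etale-r1-prime} is the analysis of stalks at generic points of $\mf{X}_k$, combined with Hensel's lemma to propagate the valuation-ring property across finite étale extensions of henselian valuation rings. Once these are in hand, the corollary is a purely formal consequence, and the proof is a one-line citation.
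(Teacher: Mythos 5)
Your proposal is correct and matches the paper's proof exactly: the corollary is stated as an immediate consequence of Proposition~\ref{prop:serre-crit} together with Lemma~\ref{prop:etale-s2-prime} and Lemma~\ref{lem:etale-r1-prime}, which is precisely the deduction you give. The additional remark that $\mf{Y}$ is again admissible is a reasonable (and harmless) bit of bookkeeping that the paper leaves implicit.
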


\bibliographystyle{amsalpha}
\renewcommand{\MR}[1]{MR \href{http://www.ams.org/mathscinet-getitem?mr=#1}{#1}}

\providecommand{\bysame}{\leavevmode\hbox to3em{\hrulefill}\thinspace}
\providecommand{\MR}{\relax\ifhmode\unskip\space\fi MR }
\providecommand{\MRhref}[2]{%
  \href{http://www.ams.org/mathscinet-getitem?mr=#1}{#2}
}
\providecommand{\href}[2]{#2}

\end{document}